\theoremstyle{plain} 
 \newtheorem{thm}{Theorem}[section]
 \newtheorem{lem}[thm]{Lemma}
 \newtheorem{cor}[thm]{Corollary}
 \newtheorem{prop}[thm]{Proposition}
 \newtheorem{claim}[thm]{Claim}
\theoremstyle{definition}
  \newtheorem{defn}[thm]{Definition}
\theoremstyle{remark}
  \newtheorem{rem}[thm]{Remark}
\renewcommand{\mod}{{\rm Mod}}
\renewcommand{\pmod}{{\rm PMod}}
\newcommand{\cal}{\mathcal}
\newcommand{\ci}[2]{\cite[#1]{#2}}
\newcommand{\cali}{\mathcal{I}}
\newcommand{\calk}{\mathcal{K}}
\newcommand{\calc}{\mathcal{C}}
\newcommand{\calt}{\mathcal{T}}
\newcommand{\lk}{{\rm Lk}}
\begin{document}

\title[The Johnson kernel and the Torelli group]{The co-Hopfian property of the Johnson kernel and the Torelli group}
\author{Yoshikata Kida}
\address{Department of Mathematics, Kyoto University, 606-8502 Kyoto, Japan}
\email{kida@math.kyoto-u.ac.jp}
\date{November 16, 2010}
%start: October 6, 2009
\subjclass[2010]{20E36, 20F38.}
\keywords{The co-Hopfian property, the Johnson kernel, the Torelli group, the complex of separating curves, the Torelli complex}

\begin{abstract}
For all but finitely many compact orientable surfaces, we show that any superinjective map from the complex of separating curves into itself is induced by an element of the extended mapping class group. 
We apply this result to proving that any finite index subgroup of the Johnson kernel is co-Hopfian. 
Analogous properties are shown for the Torelli complex and the Torelli group. 
\end{abstract}

\maketitle

%%%%%%%%%%%%%%%%%%%%%%%%%%%%%%%%%%%%%%%%%%%%% 

\section{Introduction}\label{sec-intro}

Let $S=S_{g, p}$ be a connected, compact and orientable surface of genus $g$ with $p$ boundary components.
Unless otherwise stated, we assume that a surface satisfies these conditions.
The {\it extended mapping class group} $\mod^*(S)$ for $S$ is defined as the group of isotopy classes of homeomorphisms from $S$ onto itself, where isotopy may move points in the boundary of $S$.
A simple closed curve in $S$ is said to be {\it essential} in $S$ if it is neither homotopic to a single point of $S$ nor isotopic to a boundary component of $S$.
The {\it complex of curves} for $S$, denoted by $\calc(S)$, is defined as the abstract simplicial complex whose vertices are isotopy classes of essential simple closed curves in $S$ and simplices are non-empty finite sets of such isotopy classes having mutually disjoint representatives.
This complex was introduced by Harvey \cite{harvey}.
The group $\mod^*(S)$ naturally acts on $\calc(S)$ as simplicial automorphisms. 
It is known that any simplicial automorphism of $\calc(S)$ is generally induced by an element of $\mod^*(S)$, as proved in \cite{iva-aut}, \cite{kork-aut} and \cite{luo}.
This fact is used to describe any isomorphism between finite index subgroups of $\mod^*(S)$.

A {\it superinjective map} $\phi \colon \calc(S)\rightarrow \calc(S)$, introduced by Irmak \cite{irmak1}, is defined as a simplicial map $\phi \colon \calc(S)\rightarrow \calc(S)$ preserving non-adjacency of two vertices of $\calc(S)$.
Any superinjective map from $\calc(S)$ into itself is easily seen to be injective.
In \cite{be-m}, \cite{bm-ar}, \cite{irmak1}, \cite{irmak2} and \cite{irmak-ns}, any superinjective map from $\calc(S)$ into itself is shown to be surjective and thus induced by an element of $\mod^*(S)$.
This leads to the co-Hopfian property of any finite index subgroup of $\mod^*(S)$, where a group $\Gamma$ is said to be {\it co-Hopfian} if any injective homomorphism from $\Gamma$ into itself is surjective.

Several variants of the complex of curves are introduced to follow the same line as above for some important subgroups of $\mod^*(S)$.
An essential simple closed curve in $S$ is said to be {\it separating} in $S$ if its complement in $S$ is not connected.
We define the Johnson kernel $\calk(S)$ for $S$ as the subgroup of $\mod^*(S)$ generated by all Dehn twists about separating curves in $S$.
Note that $\calk(S)$ is a normal subgroup of $\mod^*(S)$.
The {\it complex of separating curves} for $S$, denoted by $\calc_s(S)$, is defined to be the full subcomplex of $\calc(S)$ spanned by all vertices of $\calc(S)$ corresponding to separating curves in $S$.
It is shown in \cite{bm}, \cite{bm-add} and \cite{kida} that for all but finitely many surfaces $S$, any simplicial automorphism of $\calc_s(S)$ is induced by an element of $\mod^*(S)$, as precisely stated in Theorem \ref{thm-cs-aut}. 
This result is applied to proving that the abstract commensurator of $\calk(S)$ is naturally isomorphic to $\mod^*(S)$. 
The aim of this paper is to prove that any superinjective map from $\calc_s(S)$ into itself is surjective and is thus induced by an element of $\mod^*(S)$.
As a result, any finite index subgroup of $\calk(S)$ is shown to be co-Hopfian.

\begin{thm}\label{thm-cs}
Let $S=S_{g, p}$ be a surface satisfying one of the following three conditions: $g=1$ and $p\geq 3$; $g=2$ and $p\geq 2$; or $g\geq 3$ and $p\geq 0$. 
Then
\begin{enumerate}
\item any superinjective map from $\calc_s(S)$ into itself is induced by an element of $\mod^*(S)$.
\item if $\Gamma$ is a finite index subgroup of $\calk(S)$ and if $f\colon \Gamma \rightarrow \calk(S)$ is an injective homomorphism, then there exists a unique $\gamma_0\in \mod^*(S)$ satisfying the equality $f(\gamma)= \gamma_0\gamma \gamma_0^{-1}$ for any $\gamma \in \Gamma$. 
In particular, $\Gamma$ is co-Hopfian.
\end{enumerate}
\end{thm}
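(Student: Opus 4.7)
The plan breaks cleanly into two parts: proving (1) by showing that any superinjective map from $\calcs(S)$ to itself is surjective, whence an automorphism to which Theorem \ref{thm-cs-aut} applies, and then reducing (2) to (1) via Ivanov's strategy of associating a simplicial map to a homomorphism.

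For part (1), I would follow the strategy initiated by Irmak for the ordinary curve complex. Let $\phi\colon \calcs(S)\to \calcs(S)$ be superinjective. The first step is to show that $\phi$ preserves the topological type of each separating vertex, i.e.\ the unordered genus-boundary split of the surface cut along $\alpha$; this split should be recoverable from combinatorial invariants such as the join structure of $\lk(\alpha)$ in $\calcs(S)$ together with how $\lk(\alpha)$ interacts with the links of neighbors. Because superinjectivity preserves both adjacency and non-adjacency, these invariants are preserved. Next I would show that certain distinguished simplex types — for instance maximal simplices cutting $S$ into a controlled collection of subsurfaces — are preserved. The main technical obstacle is the surjectivity step: arguing by induction on complexity, one shows that the image of each vertex is determined by its incidence relations with an inductively produced family of already-hit vertices. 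The restriction on $(g,p)$ in the hypothesis enters here, since the borderline cases $g=1,\,p\geq 3$ and $g=2,\,p\geq 2$ force a genuine case analysis because $\calcs(S)$ has limited join structure there. Once $\phi$ is shown to be surjective, it is an automorphism of $\calcs(S)$, and Theorem \ref{thm-cs-aut} produces the desired element of $\mod^*(S)$.

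For part (2), given an injective homomorphism $f\colon \Gamma \to \calk(S)$ and a separating curve $\alpha$, some positive power $T_\alpha^{n}$ of the Dehn twist $T_\alpha$ lies in $\Gamma$ because $\Gamma$ has finite index in $\calk(S)$. I would define $\phi(\alpha)$ to be the unique separating curve such that $f(T_\alpha^{n})$ is a nontrivial power of $T_{\phi(\alpha)}$, after verifying that this assignment is well-defined and independent of $n$. The justification proceeds by analyzing the Nielsen--Thurston canonical reduction system of $f(T_\alpha^n)$: a centralizer count, using that the centralizer of $T_\alpha^n$ in $\Gamma$ is large enough to force many commuting elements in the image, together with the structure of $\calk(S)$, forces the reduction system to consist of a single separating curve. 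Commutativity of $T_\alpha^n$ and $T_\beta^n$ translates disjointness of $\alpha,\beta$ into disjointness of $\phi(\alpha),\phi(\beta)$, and non-commutativity preserves non-adjacency, so $\phi$ extends to a superinjective map on $\calcs(S)$. Part (1) then produces $\gamma_0\in\mod^*(S)$ inducing $\phi$. To upgrade this to the formula $f(\gamma)=\gamma_0\gamma\gamma_0^{-1}$ for every $\gamma\in\Gamma$, one checks that the element $z:=\gamma_0^{-1}f(\gamma)\gamma_0\gamma^{-1}$ centralizes every Dehn-twist power contained in $\Gamma$ and hence centralizes all of $\calk(S)$, so $z$ lies in the centralizer of $\calk(S)$ in $\mod^*(S)$, which is trivial for these surfaces. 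Uniqueness of $\gamma_0$ then follows from faithfulness of the action of $\mod^*(S)$ on $\calcs(S)$. Finally, applying the above to $f\colon\Gamma\to\Gamma\subseteq\calk(S)$, we have $f(\Gamma)=\gamma_0\Gamma\gamma_0^{-1}\subseteq\Gamma$; since $\calk(S)$ is normal in $\mod^*(S)$, both subgroups have the same index $[\calk(S):\Gamma]$ in $\calk(S)$, so $f(\Gamma)=\Gamma$ and $\Gamma$ is co-Hopfian.
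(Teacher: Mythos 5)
Your overall architecture is right: for (i) reduce to surjectivity and invoke Theorem \ref{thm-cs-aut}, and for (ii) run the standard twist-power/reduction-system argument to manufacture a superinjective map on $\calc_s(S)$ and then kill the correction term using triviality of the centralizer of $\calk(S)$ in $\mod^*(S)$. Part (ii) as you describe it is essentially the argument the paper delegates to Brendle--Margalit and to Section 6.3 of Kida's earlier paper, and I see no problem with it. The gap is in part (i), precisely at the surjectivity step, which is where all the content of the paper lives.

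Your plan is to prove surjectivity ``by induction on complexity,'' determining the image of each vertex ``by its incidence relations with an inductively produced family of already-hit vertices,'' using only the adjacency/non-adjacency data inside $\calc_s(S)$. This cannot work as stated in the base cases. For $S_{1,3}$, the link of a p-curve $\alpha$ in $\calc_s(S)$ is the set of separating curves of the complementary copy of $S_{1,2}$, and any two distinct separating curves in $S_{1,2}$ intersect: the induced subcomplex is a discrete set of vertices, so incidence relations within it carry no information and give no handle on surjectivity of $\phi$ restricted to the link. The paper gets around this by introducing an auxiliary graph $\cal{D}$ on $V_s(S_{1,2})$ whose edges are pairs with geometric intersection number exactly $4$, showing that superinjective maps preserve this relation (via hexagons, Propositions \ref{prop-hex} and \ref{prop-phi-13}), and then proving the combinatorial rigidity statement that every injective simplicial self-map of $\cal{D}$ is surjective (Proposition \ref{prop-d}, via a fibration of links over the Farey graph and a triangle-chaining argument). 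Nothing in your sketch produces this extra structure. Similarly, for $g\geq 2$ the inductive step requires extending $\phi$ to a simplicial map $\Phi$ on all of $\calc(S)$ via sharing pairs of h-curves, and the verification that $\Phi$ is simplicial --- in particular that $\Phi$ sends bounding pairs to disjoint curves (Claim \ref{claim-bp-dis}) --- is exactly the point where Brendle--Margalit's earlier argument had a gap (Remark \ref{rem-gap}); it again relies on the graph $\cal{D}$ and on controlling intersection number $1$ inside handles, not on disjointness data alone. Finally, $S_{2,2}$ and $S_{3,0}$ each need separate ad hoc treatment (the graph $\cal{E}$ of sharing pairs for a fixed nonseparating curve, and a detour through the Torelli complex, respectively) because the generic induction bottoms out there. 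So while your outline identifies the right target, the mechanism you propose for hitting it --- pure $\calc_s$-incidence bookkeeping --- is insufficient, and the missing ingredient is the family of finer, higher-intersection-number invariants that superinjectivity can be shown to preserve.
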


Most of the paper is devoted to the proof of assertion (i).
We omit the proof of assertion (ii) since the process to derive it from assertion (i) is already discussed in Section 5 of \cite{bm} and Section 6.3 of \cite{kida}.
We obtain similar conclusions for the Torelli complex $\calt(S)$ and the Torelli group $\cali(S)$ for $S$, which are defined in Section \ref{sec-pre}.

\begin{thm}\label{thm-t}
Let $S$ be the surface in Theorem \ref{thm-cs}. 
Then
\begin{enumerate}
\item any superinjective map from $\calt(S)$ into itself is induced by an element of $\mod^*(S)$.
\item if $\Lambda$ is a finite index subgroup of $\cali(S)$ and if $h\colon \Lambda \rightarrow \cali(S)$ is an injective homomorphism, then there exists a unique $\lambda_0\in \mod^*(S)$ satisfying the equality $h(\lambda)= \lambda_0\lambda \lambda_0^{-1}$ for any $\lambda \in \Lambda$. 
In particular, $\Lambda$ is co-Hopfian.
\end{enumerate}
\end{thm}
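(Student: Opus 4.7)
The plan is to reduce assertion (i) of Theorem \ref{thm-t} to assertion (i) of Theorem \ref{thm-cs}, exploiting that the subcomplex $\calcs(S)\subset \calt(S)$ consists exactly of the separating-curve vertices, while the remaining vertices of $\calt(S)$ correspond to bounding pairs. Given a superinjective map $\phi \colon \calt(S)\to \calt(S)$, the first and most substantial task is to show that $\phi$ preserves this dichotomy, i.e., $\phi(\calcs(S))\subset \calcs(S)$. This is a combinatorial recognition problem: one needs an intrinsic characterization of separating-curve vertices in $\calt(S)$ that superinjective maps respect. Natural invariants to try are the number of vertices in the link that are pairwise non-adjacent, the structure of maximal simplices containing the vertex (equivalently, the shape of top-dimensional pants-type decompositions of $S$ using only separating curves and bounding pairs on one side of the vertex), or the topology of the complement obtainable from the link, any of which should distinguish the two vertex types in the surfaces of the theorem.

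Once $\phi$ is known to preserve $\calcs(S)$, its restriction is a superinjective self-map of $\calcs(S)$, so Theorem \ref{thm-cs}(i) furnishes a unique $\lambda_0 \in \mod^*(S)$ inducing $\phi|_{\calcs(S)}$. The next step is to upgrade this to all of $\calt(S)$: for each bounding pair $\{a,b\}$ one exhibits $\{a,b\}$ as determined by finitely many separating curves in its link (the separating curves that bound, together with $a\cup b$, the complementary subsurfaces), and the fact that $\phi$ is simplicial forces $\phi(\{a,b\})$ to be the bounding pair determined analogously by the $\lambda_0$-images of those separating curves. This yields $\phi(v)=\lambda_0\cdot v$ for every vertex $v$, completing assertion (i).

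Assertion (ii) is then derived from (i) by the standard template already invoked by the author for Theorem \ref{thm-cs}(ii): given an injective homomorphism $h\colon \Lambda \to \cali(S)$, one shows that $h$ sends each Dehn twist about a separating curve and each bounding pair map in $\Lambda$ to a power of the corresponding generator for some curve or bounding pair; the assignment on isotopy classes thus obtained defines a superinjective self-map of $\calt(S)$, which by (i) is induced by a unique $\lambda_0\in \mod^*(S)$. A direct check on the generators extended to all of $\Lambda$ gives $h(\lambda)=\lambda_0\lambda\lambda_0^{-1}$, with uniqueness following from the triviality of the centralizer of $\Lambda$ in $\mod^*(S)$. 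The references \cite{bm} and \cite{kida} contain exactly this argument in the parallel setting of $\calk(S)$ and $\calcs(S)$, and the translation to $\cali(S)$ and $\calt(S)$ is formal.

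The hard part is thus the topological/combinatorial recognition of separating curves inside $\calt(S)$ under superinjective maps. Everything else is a translation of arguments that already appear in the literature cited in the paper, or a routine extension from the separating-curve subcomplex to the full Torelli complex using the recipe for encoding bounding pairs via their neighborhoods.
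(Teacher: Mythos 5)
Your proposal follows essentially the same route as the paper: the paper first quotes Lemma 3.7 and Proposition 3.16 of \cite{kida} for the fact that a superinjective self-map of $\calt(S)$ preserves the two vertex types (so the ``recognition problem'' you single out as the hard part is already settled in the literature the paper cites, not reproved here), then applies Theorem \ref{thm-cs}(i) to the restriction to $\calc_s(S)$, and finally pins down the image of each BP exactly as you suggest, as the unique BP disjoint from a suitable finite collection of separating curves. Assertion (ii) is likewise omitted in the paper with reference to Section 5 of \cite{bm} and Section 6.3 of \cite{kida}, matching your last paragraph.
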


The proof of this theorem uses Theorem \ref{thm-cs} and is presented in Section \ref{sec-tor}.
We refer to Remark 1.3 in \cite{kida} for known facts on the complex of separating curves and the Torelli complex for a surface which is not dealt with in Theorems \ref{thm-cs} and \ref{thm-t}.
Among other things, it is notable that $\calc_s(S_{2, 1})$ consists of countably infinitely many $\aleph_0$-regular trees.
This is a direct consequence of Theorem 7.1 in \cite{kls}.

Although the same conclusions as Theorems \ref{thm-cs} and \ref{thm-t} for closed surfaces are asserted in Theorems 1.6 and 1.8 of Brendle-Margalit's paper \cite{bm}, their argument contains a gap as precisely discussed in Remark \ref{rem-gap}.
The present paper fills this gap by considering not only closed surfaces but also non-closed ones, while Brendle-Margalit deal with only closed ones.
In fact, assertion (i) in Theorem \ref{thm-cs} are proved by induction on $g$ and $p$, whose first step is the case $(g, p)=(1, 3)$.

This paper is organized as follows. 
In Section \ref{sec-pre}, we introduce the terminology and notation employed throughout the paper and review the definition of the complexes and subgroups of the mapping class group discussed above. 
In Section \ref{sec-d}, we introduce the simplicial graph $\cal{D}$ associated with $S_{1, 2}$ and provide basic properties of it, which will be used in subsequent sections. 
In Section \ref{sec-g1}, we obtain the conclusion of Theorem \ref{thm-cs} for surfaces of genus one.
In Section \ref{sec-const}, given a surface $S$ with its genus at least two and a superinjective map $\phi \colon \calc_s(S)\rightarrow \calc_s(S)$, we explain how to extend $\phi$ to a simplicial map $\Phi \colon \calc(S)\rightarrow \calc(S)$. 
Using the map $\Phi$, we prove surjectivity of $\phi$ for $S_{2, 2}$ in Section \ref{sec-22} and prove it for the remainder of surfaces other than $S_{3, 0}$ by induction on $g$ and $p$ in Section \ref{sec-g2}. 
We deal with $S_{3, 0}$ independently in Section \ref{sec-30}.
Finally, we deduce Theorem \ref{thm-t} from Theorem \ref{thm-cs} in Section \ref{sec-tor}.

%%%%%%%%%%%%%%%%%%%%%%%%%%%%%%%%%%%%%%%%%%%%%%

\section{Preliminaries}\label{sec-pre}

\subsection{Terminology}\label{subsec-term}

Let $S=S_{g, p}$ be a surface of genus $g$ with $p$ boundary components. 
We define $V(S)$ to be the set of isotopy classes of essential simple closed curves in $S$.
When there is no confusion, we mean by a curve either an essential simple closed curve in $S$ or the isotopy class of it. 
An essential simple closed curve $a$ in $S$ is said to be {\it separating} in $S$ if $S\setminus a$ is not connected, and otherwise $a$ is said to be {\it non-separating} in $S$.
Whether an essential simple closed curve in $S$ is separating in $S$ or not depends only on its isotopy class. 
A pair of non-separating curves in $S$, $\{ a, b \}$, is called a {\it bounding pair (BP)} in $S$ if $a$ and $b$ are disjoint and non-isotopic and if $S\setminus (a \cup b)$ is not connected. 
These conditions depend only on the isotopy classes of $a$ and $b$.

We mean by a {\it handle} a surface homeomorphic to $S_{1, 1}$ and mean by a {\it pair of pants} a surface homeomorphic to $S_{0, 3}$.
Let $a$ be a separating curve in $S$.
If $a$ cuts off a handle from $S$, then $a$ is called an {\it h-curve} in $S$.
If $a$ cuts off a pair of pants from $S$, then $a$ is called a {\it p-curve} in $S$.

Suppose that $\partial S$ is non-empty.
A simple arc $l$ in $S$ is said to be {\it essential} in $S$ if
\begin{itemize}
\item $\partial l$ consists of two distinct points of $\partial S$;
\item $l$ meets $\partial S$ only at its end points; and
\item $l$ is not isotopic relative to $\partial l$ to an arc in $\partial S$. 
\end{itemize}
Let $A(S)$ denote the set of isotopy classes of essential simple arcs in $S$, where isotopy may move the end points of arcs, keeping them staying in $\partial S$.
We say that two elements of $V(S)\sqcup A(S)$ are {\it disjoint} if they have disjoint representatives.
Frequently, we do not distinguish an element of $A(S)$ and its representative if there is no confusion.
An essential simple arc $l$ in $S$ is said to be {\it separating} in $S$ if $S\setminus l$ is not connected.
Otherwise $l$ is said to be {\it non-separating} in $S$.
Whether an essential simple arc in $S$ is separating in $S$ or not depends only on its isotopy class. 
Given two components $\partial_1$, $\partial_2$ of $\partial S$, we say that an essential simple arc $l$ in $S$ {\it connects $\partial_1$ and $\partial_2$} if one of the end points of $l$ lies in $\partial_1$ and another in $\partial_2$.

%%%%%%%%%%%%%%%%%%%%%%%%%%%%%%%%%%%%%%%%%%%%%

\subsection{The mapping class group and its subgroups}\label{subsec-mcg}

Let $S$ be a surface.
The {\it mapping class group} $\mod(S)$ for $S$ is defined as the subgroup of $\mod^*(S)$ consisting of all isotopy classes of orientation-preserving homeomorphisms from $S$ onto itself.
The {\it pure mapping class group} $\pmod(S)$ for $S$ is defined as the subgroup of $\mod^*(S)$ consisting of all isotopy classes of orientation-preserving homeomorphisms from $S$ onto itself that fix each boundary component of $S$ as a set.
Both $\mod(S)$ and $\pmod(S)$ are normal subgroups of $\mod^*(S)$ of finite index.

For each $a\in V(S)$, we denote by $t_{a}\in \pmod(S)$ the {\it (left) Dehn twist} about $a$. 
The Johnson kernel $\calk(S)$ for $S$ is the subgroup of $\pmod(S)$ generated by all Dehn twists about separating curves in $S$. 
The {\it Torelli group} $\cali(S)$ for $S$ is defined as the subgroup of $\pmod(S)$ generated by all Dehn twists about separating curves in $S$ and all elements of the form $t_{a}t_{b}^{-1}$ with $\{ a, b\}$ a BP in $S$.
Note that $\calk(S)$ and $\cali(S)$ are normal subgroups of $\mod^*(S)$.
Originally, the Torelli group are defined in a different way when the number of boundary components of $S$ is at most one. 
Thanks to \cite{johnson} and \cite{powell}, the Torelli group defined originally is equal to the one defined above.

%%%%%%%%%%%%%%%%%%%%%%%%%%%%%%%%%%%%%%%%%%%%%

\subsection{Simplicial complexes associated to a surface}\label{subsec-comp}

Let $S$ be a surface.
We denote by $i\colon V(S)\times V(S)\rightarrow \mathbb{Z}_{\geq 0}$ the {\it geometric intersection number}, i.e., the minimal cardinality of the intersection of representatives for two elements of $V(S)$. 
Let $\Sigma(S)$ denote the set of non-empty finite subsets $\sigma$ of $V(S)$ with $i(\alpha, \beta)=0$ for any $\alpha, \beta \in \sigma$.
We extend $i$ to the symmetric function on $(V(S)\sqcup \Sigma(S))^2$ so that $i(\alpha, \sigma)=\sum_{\beta \in \sigma}i(\alpha, \beta)$ and $i(\sigma, \tau)=\sum_{\beta \in \sigma, \gamma \in \tau}i(\beta, \gamma)$ for any $\alpha \in V(S)$ and $\sigma, \tau \in \Sigma(S)$.
We say that two elements $\sigma$, $\tau$ of $V(S)\sqcup \Sigma(S)$ are {\it disjoint} if $i(\sigma, \tau)=0$, and otherwise we say that they {\it intersect}. 

For each $\sigma \in \Sigma(S)$, we denote by $S_{\sigma}$ the surface obtained by cutting $S$ along all curves in $\sigma$. 
When $\sigma$ consists of a single curve $a$, we denote it by $S_a$ for simplicity. 
We often identify a component of $S_{\sigma}$ with a complementary component of a tubular neighborhood of a one-dimensional submanifold representing $\sigma$ in $S$ if there is no confusion.
If $Q$ is a component of $S_{\sigma}$, then $V(Q)$ is naturally identified with a subset of $V(S)$.

The complex of curves $\calc(S)$ for $S$ is the abstract simplicial complex such that the set of vertices and simplices are $V(S)$ and $\Sigma(S)$, respectively.
Let $V_s(S)$ denote the subset of $V(S)$ consisting of separating curves in $S$.
The complex of separating curves for $S$, denoted by $\calc_s(S)$, is defined as the full subcomplex of $\calc(S)$ spanned by $V_s(S)$.

Let $V_{bp}(S)$ denote the set of isotopy classes of BPs in $S$. 
We often regard an element of $V_{bp}(S)$ as an edge of $\calc(S)$. 
The {\it Torelli complex} for $S$, denoted by $\calt(S)$, is defined to be the abstract simplicial complex such that the set of vertices is the disjoint union $V_{s}(S)\sqcup V_{bp}(S)$, and a non-empty finite subset $\sigma$ of $V_{s}(S)\sqcup V_{bp}(S)$ is a simplex of $\calt(S)$ if and only if any two elements of $\sigma$ are disjoint.
The Torelli complex (with additional structure and for closed surfaces) were introduced by Farb-Ivanov \cite{farb-ivanov}.

Connectivity of $\calc_s(S)$ and $\calt(S)$ is already discussed in \cite{farb-ivanov} and \cite{mv} when $S$ is closed.
Applying Putman's idea in Lemma 2.1 of \cite{putman-conn} to prove connectivity of a simplicial complex on which $\pmod(S)$ acts, we obtain the following lemma without effort.

\begin{lem}
Let $S=S_{g, p}$ be a surface and assume one of the following three conditions: $g=1$ and $p\geq 3$; $g=2$ and $p\geq 2$; and $g\geq 3$ and $p\geq 0$.
Then both $\calc_s(S)$ and $\calt(S)$ are connected.
\end{lem}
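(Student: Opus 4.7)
The plan is to invoke Putman's trick (\cite[Lemma 2.1]{putman-conn}): if $G$ is a group with generating set $T$ acting simplicially on a simplicial complex $X$, and $v_{0}$ is a vertex such that (a) every $G$-orbit on the vertex set of $X$ meets the connected component of $v_{0}$, and (b) for every $t\in T\cup T^{-1}$ there is an edge-path in $X$ from $v_{0}$ to $tv_{0}$, then $X$ is connected. I would apply this with $G=\pmod(S)$ acting on $X=\calc_{s}(S)$, taking $T$ to be a finite Humphries-type generating set of Dehn twists $t_{c_{1}},\dots,t_{c_{n}}$ about non-separating curves, and choosing $v_{0}\in V_{s}(S)$ of a convenient topological type (for instance a curve cutting $S$ into a handle plus a complement of maximal complexity).

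For condition (a), the change-of-coordinates principle implies that $\pmod(S)$-orbits on $V_{s}(S)$ are classified by the topological type of the cut surface $S_{a}$ together with the partition of $\partial S$ between its two components; in particular there are only finitely many orbits. For each type I would exhibit an orbit representative $w$ lying in the component of $v_{0}$, either by taking $w$ disjoint from $v_{0}$ directly, or by producing a short chain $w=w_{0},w_{1},\dots,w_{k}=v_{0}$ of pairwise disjoint separating curves. The inequalities on $(g,p)$ guarantee enough slack in $S$ to draw such chains.

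For condition (b), let $c=c_{i}$ be a non-separating generating curve. If $i(c,v_{0})=0$ then $t_{c}v_{0}=v_{0}$ and there is nothing to verify. Otherwise, it suffices to find $w\in V_{s}(S)$ with $i(w,v_{0})=i(w,c)=0$: such a $w$ is automatically disjoint from $t_{c}(v_{0})$ since $t_{c}$ is supported near $c$, so $\{v_{0},w\}$ and $\{t_{c}v_{0},w\}$ are edges in $\calc_{s}(S)$. Existence of such a $w$ reduces to producing an essential simple closed curve that is separating in $S$ inside some component $Q$ of $S\setminus(v_{0}\cup c)$; an Euler-characteristic count using the hypotheses on $(g,p)$ shows that one such component has complexity at least that of $S_{1,2}$ or $S_{0,4}$, where such a curve is easily found. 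For $\calt(S)$, I would finish with the observation that every BP vertex $\{a,b\}$ is adjacent in $\calt(S)$ to at least one separating-curve vertex of $S$, namely a boundary component of a regular neighborhood of $a\cup b$; thus every BP is connected in $\calt(S)$ to a vertex of $\calc_{s}(S)$, reducing connectivity of $\calt(S)$ to connectivity of $\calc_{s}(S)$ which has already been established.

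The main obstacle lies in the smallest cases, $(g,p)=(1,3)$ and $(2,2)$, where the margin for finding the auxiliary separating curves in step (b) is tight; these have to be verified by hand on concrete pictures of $S$, checking that for each Humphries generator $c_{i}$ the prescribed component of $S\setminus(v_{0}\cup c_{i})$ really does contain an essential separating curve of $S$. Away from these edge cases, both steps are direct consequences of the abundance of subsurfaces in $S$.
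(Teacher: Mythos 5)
Your strategy is exactly the one the paper intends: the paper gives no written proof of this lemma beyond saying that it follows "without effort" from Putman's Lemma 2.1 applied to the $\pmod(S)$-action with the Gervais twist generators of Figure 4(a), and it points to Lemmas 3.3, 6.2 and 7.1 for the template, which is precisely your scheme --- a base vertex $v_0$, finitely many orbits handled by change of coordinates, and for each twist generator $t_c$ an auxiliary separating curve $w$ disjoint from both $v_0$ and $c$ (your observation that $i(w,c)=0$ forces $i(w,t_c^{\pm1}v_0)=i(w,v_0)=0$ is the right mechanism, and matches the condition "$i(h\alpha,\beta)=i(\alpha,\beta)=0$" used in the paper's Lemma 7.1). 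Your honest flagging of the tight cases $(1,3)$ and $(2,2)$ as finite picture-checks is also consistent with what the paper leaves implicit.

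There is, however, one step that fails as written: in the reduction of $\calt(S)$ to $\calc_s(S)$ you claim that a BP vertex $\{a,b\}$ is adjacent to the separating curve given by "a boundary component of a regular neighborhood of $a\cup b$". Since $a$ and $b$ are disjoint, a regular neighborhood of $a\cup b$ is a union of two annuli, and its boundary components are isotopic to $a$ and $b$ themselves --- non-separating curves, not vertices of $\calc_s(S)$. The statement you actually need (every BP is adjacent in $\calt(S)$ to some vertex of $V_s(S)$) is still true under the hypotheses on $(g,p)$, but it requires a different witness: the BP cuts $S$ into two pieces $R_1$, $R_2$ with genera summing to $g-1$; if some $R_j$ has positive genus, an h-curve of $S$ contained in $R_j$ works, and if $g=1$ (so both pieces have genus $0$), the assumption $p\geq 3$ forces some $R_j$ to contain at least two components of $\partial S$, and a curve in $R_j$ encircling two of them is an essential separating curve of $S$ disjoint from $a$ and $b$. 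With that repair the reduction of connectivity of $\calt(S)$ to that of $\calc_s(S)$ goes through.
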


The proof of this lemma uses a family of simple closed curves in $S$, described in Figure \ref{fig-mcg} (a), such that the Dehn twists about them generate $\pmod(S)$.
The same kind of argument to apply Putman's idea appears in the proof of Lemmas \ref{lem-d-conn}, \ref{lem-p-conn} and \ref{lem-hp-conn}.

\subsection{Superinjective maps}

Let $S$ be a surface, and let $X$ be one of the simplicial complexes $\calc(S)$, $\calc_s(S)$ and $\calt(S)$. 
We denote by $V(X)$ the set of vertices of $X$. 
Note that a map $\phi \colon V(X)\rightarrow V(X)$ defines a simplicial map from $X$ into itself if and only if $i(\phi(a), \phi(b))=0$ for any two vertices $a, b\in V(X)$ with $i(a, b)=0$. 
We mean by a {\it superinjective map} $\phi \colon X\rightarrow X$ a simplicial map $\phi \colon X\rightarrow X$ satisfying $i(\phi(a), \phi(b))\neq 0$ for any two vertices $a, b\in V(X)$ with $i(a, b)\neq 0$.
This property was introduced by Irmak \cite{irmak1} when $X=\calc(S)$. 

Any superinjective map $\phi \colon X\rightarrow X$ is injective.
For if there were two distinct vertices $a, b\in V(X)$ with $\phi(a)=\phi(b)$, then superinjectivity of $\phi$ would imply $i(a, b)=0$.
Since $a$ and $b$ are distinct, we can choose $c\in V(X)$ with $i(a, c)=0$ and $i(b, c)\neq 0$.
By superinjectivity of $\phi$, we have $i(\phi(a), \phi(c))=0$ and $i(\phi(b), \phi(c))\neq 0$.
This contradicts the equality $\phi(a)=\phi(b)$.

\subsection{Known results}

To prove surjectivity of a superinjective map $\phi \colon \calc_s(S)\rightarrow \calc_s(S)$ when $\calc_s(S)$ is connected, it is enough to show that $\phi$ sends the link of each vertex $\alpha$ of $\calc_s(S)$ onto the link of $\phi(\alpha)$.
We apply induction on $g$ and $p$ to proving it because the link of a vertex of $\calc_s(S)$ consists of the complexes of separating curves for surfaces with $g$ or $p$ smaller than those of $S$.
The following theorems will be used to complete this inductive argument.

\begin{thm}[\cite{iva-aut}, \cite{kork-aut}, \cite{luo}]\label{thm-cc}
Let $S=S_{g, p}$ be a surface with $3g+p-4>0$. 
If $(g, p)\neq (1, 2)$, then any automorphism of $\calc(S)$ is induced by an element of $\mod^*(S)$. 
If $(g, p)=(1, 2)$, then any automorphism of $\calc(S)$ that preserves vertices corresponding to separating curves in $S$ is induced by an element of $\mod^*(S)$.
\end{thm}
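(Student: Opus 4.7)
The plan is to follow the strategy introduced by Ivanov, which combines a combinatorial recognition principle for topological types of curves with a rigidity argument on pants decompositions. The starting observation is that, for any simplex $\sigma \in \Sigma(S)$, the link $\lk(\sigma)$ in $\calc(S)$ is naturally isomorphic to the join of the curve complexes of the components of $S_{\sigma}$. Using this inductively on the complexity $3g+p$, one obtains a purely combinatorial description of when a vertex is separating, when it is non-separating, and more generally of the homeomorphism type of $S_{\alpha}$ for a vertex $\alpha$. Any automorphism $\phi$ of $\calc(S)$ must therefore preserve these topological invariants; in particular it sends separating curves to separating curves and pants decompositions to pants decompositions.

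The next step is to analyze the codimension-one faces inside top-dimensional simplices. Removing a curve from a pants decomposition exposes a surface component homeomorphic to $S_{0,4}$ or $S_{1,1}$, whose curve complexes are both Farey graphs. The curves that can replace the removed one to form a new pants decomposition are precisely the vertices of this Farey graph, and two replacements are related by an elementary move exactly when the corresponding vertices are adjacent. This structural fact, combined with the preservation of topological types, allows one to show that $\phi$ preserves elementary moves between pants decompositions.

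The third step is the rigidity argument itself. Fix a pants decomposition $\sigma$, and use the action of $\phi$ on $\sigma$ together with the elementary moves emanating from it to identify, curve by curve, a candidate homeomorphism $h$ of $S$ whose isotopy class realizes $\phi$ on the vertices visited. Connectedness of the pants graph and compatibility at each elementary move propagate the identification to all of $V(\calc(S))$, and an Alexander-method argument confirms that the resulting mapping class in $\mod^*(S)$ genuinely induces $\phi$.

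The exceptional case $(g,p) = (1,2)$ requires extra care. Cutting $S_{1,2}$ along a non-separating curve yields $S_{0,4}$, while cutting along a separating curve yields $S_{0,3}\sqcup S_{1,1}$; since $S_{0,3}$ carries no essential curves, the link of a separating vertex reduces to $\calc(S_{1,1})$, which is itself a Farey graph, exactly as is the link of a non-separating vertex. The combinatorial recognition principle of the first step therefore cannot distinguish the two classes of vertices, and the extra hypothesis that $\phi$ preserves separating vertices exactly restores the information lost. The main obstacle throughout is this inductive combinatorial recognition of topological types at the base of the argument; once it is in place the pants-decomposition and rigidity steps proceed essentially formally.
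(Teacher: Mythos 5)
This statement is Theorem \ref{thm-cc}, which the paper does not prove: it is quoted from Ivanov, Korkmaz and Luo \cite{iva-aut}, \cite{kork-aut}, \cite{luo} and used as a black box, so there is no in-paper proof to compare your attempt against. Judged on its own terms, your write-up is a plausible outline of the standard strategy (recognize topological types combinatorially, reduce to a rigid combinatorial object, finish with the Alexander method), but it is an outline rather than a proof. The entire difficulty of the theorem is concentrated in your first step, the claim that one can read off the topological type of a curve, and of $S_{\alpha}$, ``inductively on the complexity'' from link data. You assert this and even flag it as ``the main obstacle,'' but you never exhibit the combinatorial invariants that do the recognizing (e.g.\ characterizing non-separating curves via the connectivity or dimension of links of codimension-one faces of top simplices, distinguishing $S_{0,4}$-type from $S_{1,1}$-type pants curves, etc.). Without that, steps two and three have nothing to stand on. A second, smaller inaccuracy: in the $(1,2)$ discussion you say the link of a separating vertex ``reduces to $\calc(S_{1,1})$, which is itself a Farey graph.'' It is not: in $\calc(S_{1,1})$ as defined here any two distinct vertices intersect, so the complex is a countable discrete set, and the same holds for $\calc(S_{0,4})$. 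The correct statement of the obstruction is that both links are infinite discrete sets and hence combinatorially indistinguishable, which is why the extra hypothesis on separating vertices is imposed; the Farey graph only appears after one adds edges for intersection number one (the graph $\cal{F}(X)$ of Section \ref{sec-d}), which is additional structure not available to a bare automorphism of $\calc(S)$.

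It is also worth noting that your route through pants decompositions and elementary moves in the pants graph is not the one taken in the cited sources: Ivanov and Korkmaz pass to the arc complex and use the connectivity of the set of ideal triangulations under elementary moves, while Luo runs an induction on complexity using a different encoding. Your scheme could in principle be made to work, but the rigidity step for the pants graph that you invoke is itself a nontrivial theorem and would need either a proof or a citation; as written, the proposal replaces one unproved rigidity statement with another.
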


Any superinjective map from $\calc(S)$ into itself is shown to be surjective in \cite{be-m}, \cite{bm-ar}, \cite{irmak1}, \cite{irmak2} and \cite{irmak-ns}. 
More generally, the following theorem is obtained.

\begin{thm}[\cite{sha}]\label{thm-sha}
Let $S=S_{g, p}$ be a surface with $3g+p-4>0$. 
Then any injective simplicial map from $\calc(S)$ into itself is surjective.
\end{thm}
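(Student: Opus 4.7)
The plan is to derive this theorem from the already-established fact that superinjective self-maps of $\calc(S)$ are surjective (\cite{be-m}, \cite{bm-ar}, \cite{irmak1}, \cite{irmak2}, \cite{irmak-ns}). Concretely, it suffices to show that any injective simplicial map $\phi : \calc(S) \to \calc(S)$ is automatically superinjective, i.e., that $i(a, b) \neq 0$ implies $i(\phi(a), \phi(b)) \neq 0$; then the cited results yield surjectivity.

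I would proceed by induction on the complexity $\xi(S) = 3g + p - 4$, with the inductive tool being the well-known identification of links in the curve complex. For $a \in V(\calc(S))$, the link $\lk(a)$ is isomorphic to the join of the curve complexes $\calc(Q)$ of the components $Q$ of the cut surface $S_a$, and $\phi$ restricts to an injective simplicial map $\phi|_{\lk(a)} : \lk(a) \to \lk(\phi(a))$. The core argument for superinjectivity is then a one-line contradiction: assume $i(a, b) \neq 0$ while $i(\phi(a), \phi(b)) = 0$, so $\phi(b) \in \lk(\phi(a))$ but $b \notin \lk(a)$. If $\phi|_{\lk(a)}$ is surjective on vertices, then $\phi(b) = \phi(b')$ for some $b' \in \lk(a)$, and injectivity of $\phi$ forces $b = b' \in \lk(a)$, a contradiction. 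Surjectivity of $\phi|_{\lk(a)}$ should in turn follow from the induction hypothesis, applied factor by factor across the join decompositions of source and target.

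The hard part will be twofold. First, to invoke the inductive hypothesis on the join factors, one must show that $\phi|_{\lk(a)}$ respects the topological type of vertices (separating vs.\ non-separating, homeomorphism type of the cut surface), so that it maps $\calc(Q)$ into the link-factor corresponding to a homeomorphic piece. This requires a careful combinatorial characterization of topological types in terms of the pattern of links, done by examining how $\phi$ acts on configurations of low complexity. Second, the base cases of the induction must be treated by hand, most delicately the exceptional case $S = S_{1,2}$ (where, as reflected in Theorem \ref{thm-cc}, separating and non-separating vertices are interchanged by some simplicial automorphisms), and cases in which a piece of $S_a$ has discrete curve complex such as $\calc(S_{0,4})$ or $\calc(S_{1,1})$, where the inductive machinery is vacuous and direct combinatorial arguments on the near-discrete link structure are required.
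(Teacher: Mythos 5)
The paper does not actually prove Theorem \ref{thm-sha}; it quotes it from Shackleton \cite{sha}, so there is no internal proof to compare against, and your proposal has to be judged against what such a proof genuinely requires. Your outline points in the right general direction (induction on complexity, links as joins, combinatorial detection of topological type), but it has a real gap at the step you present as routine: ``surjectivity of $\phi|_{\lk(a)}$ should follow from the induction hypothesis, applied factor by factor.'' This fails whenever a component $Q$ of $S_a$ is homeomorphic to $S_{1, 1}$ or $S_{0, 4}$: then $\calc(Q)$ is an infinite \emph{discrete} set (no two distinct essential curves in such a piece are disjoint), the induction hypothesis is silent about it, and an injective self-map of an infinite discrete set is certainly not forced to be surjective. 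Crucially, this is not confined to the base cases of the induction. For every surface in the range of the theorem, the link of every h-curve has an $S_{1, 1}$ factor, and the link of every curve cutting off a four-holed sphere has an $S_{0, 4}$ factor. For all such vertices $a$ surjectivity on the link cannot come from the induction hypothesis at all; it has to come from showing that $\phi$ preserves the relations $i(\cdot, \cdot)=1$ on curves in a handle and $i(\cdot, \cdot)=2$ on curves in a four-holed sphere, and then exploiting the rigidity of the Farey graph (exactly the role played by the graphs $\cal{F}(X)$ and $\cal{D}$ elsewhere in this paper). That Farey-graph step is the technical heart of the arguments of Irmak and Shackleton, and your proposal does not contain it.

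A second, related overclaim: for a merely injective simplicial map, as opposed to a superinjective one, showing that $\phi$ carries the factor $\calc(Q_i)$ of $\lk(a)$ into a single factor of $\lk(\phi(a))$ of the same homeomorphism type is not a preliminary to be checked before the induction starts; it is most of the proof. Superinjectivity hands you preservation of non-adjacency, which is what makes topological types combinatorially visible; with injectivity alone one has to start from the weaker fact that injective simplicial maps send top-dimensional simplices (pants decompositions) to top-dimensional simplices and reconstruct type-recognition from adjacency patterns inside and between maximal simplices. So the architecture you propose is salvageable, but the two items you defer as ``the hard part'' constitute essentially the entire content of \cite{sha}, and one of them --- the factor-by-factor appeal to the induction hypothesis --- is false as stated for the discrete factors.
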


The same conclusion as Theorem \ref{thm-cc} is obtained for the complexes of separating curves for certain surfaces.

\begin{thm}[\cite{bm}, \cite{kida}]\label{thm-cs-aut}
Let $S=S_{g, p}$ be a surface satisfying one of the following three conditions: $g=1$ and $p\geq 3$; $g=2$ and $p\geq 2$; or $g\geq 3$ and $p\geq 0$. 
Then any automorphism of $\calc_s(S)$ is induced by an element of $\mod^*(S)$.
\end{thm}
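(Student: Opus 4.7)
The plan is to promote the given automorphism $\phi$ of $\calcs(S)$ to a simplicial automorphism $\Phi$ of the full complex of curves $\calc(S)$ and then invoke Theorem \ref{thm-cc}. Because our hypotheses on $(g,p)$ exclude $(1,2)$, the general form of Ivanov--Korkmaz--Luo applies: $\Phi$ is induced by a mapping class $\gamma \in \mod^*(S)$, and this $\gamma$ automatically induces the original map $\phi$ on the full subcomplex $\calcs(S)$.

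The first step is to show that $\phi$ preserves the topological type of each separating curve, i.e., the unordered pair $\{(g_1,p_1),(g_2,p_2)\}$ recording the two components of $S_a$. This data is detectable from the link of $a$ in $\calcs(S)$, which decomposes as the simplicial join $\calcs(Q_1) \ast \calcs(Q_2)$ of the separating curve complexes of the two pieces, since a separating curve of $S$ disjoint from $a$ lies in one of the $Q_i$ and is separating there. Combinatorial invariants such as dimension, the structure of maximal simplices, and whether either join factor is empty distinguish h-curves, p-curves, and the remaining separating types that occur in $S$, and $\phi$ preserves all of these invariants.

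The technically substantial step is the canonical extension of $\phi$ to non-separating curves. To each non-separating $c \in V(S)$ one associates a combinatorially defined subset $\Sigma(c) \subset V_s(S)$, built for instance from configurations of h-curves whose handles, together with $c$, cobound a subsurface of prescribed topology, and augmented with p-curve configurations when the supply of h-curves is thin. Once one verifies that distinct non-separating curves give distinct $\Sigma(c)$, one defines $\Phi(c)$ as the unique non-separating curve satisfying $\Sigma(\Phi(c)) = \phi(\Sigma(c))$, and extends $\Phi$ by $\phi$ on the separating vertices. Simpliciality of $\Phi$ then follows by transporting disjointness through $\phi$, and the map defined analogously from $\phi^{-1}$ provides an inverse. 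The main obstacle is the uniform verification that $c \mapsto \Sigma(c)$ is injective across the three surface families in the hypothesis, particularly for the low-complexity cases $S_{1,3}$ and $S_{2,2}$ where ad hoc refinements of $\Sigma(c)$ are apt to be needed; once $\Phi$ is produced, Theorem \ref{thm-cc} completes the argument.
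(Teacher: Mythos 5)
First, note that the paper does not prove Theorem \ref{thm-cs-aut} at all: it is quoted from \cite{bm} and \cite{kida} as a known input, so there is no in-paper proof to match your argument against. Your outline does track the strategy those references use for genus at least two (and which Section \ref{sec-const} of this paper recalls): extend $\phi$ to a map $\Phi$ on all of $V(S)$ by reconstructing each non-separating curve from separating data, then invoke Theorem \ref{thm-cc}. But as written the proposal has two genuine gaps. The first is that your set $\Sigma(c)$ is never actually defined, and the entire mathematical content of the theorem lives exactly there. The working definition is the \emph{sharing pair}: two h-curves $\alpha,\beta$ whose handles meet in an annulus with core $c$ and with connected complement, so that $c$ is recovered as the unique non-separating curve determined by $\{\alpha,\beta\}$. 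The hard steps are then (a) showing that an automorphism of $\calc_s(S)$ preserves the purely topological relation ``$\{\alpha,\beta\}$ is a sharing pair,'' which requires characterizing sharing pairs by a finite configuration of separating curves and intersection/disjointness data inside $\calc_s(S)$, and (b) showing that two sharing pairs for the \emph{same} curve $c$ are sent to sharing pairs for the same curve (Proposition \ref{prop-share} here). Saying that ``combinatorial invariants'' distinguish the relevant configurations does not discharge either step; (a) and (b) are each several pages in \cite{bm} and \cite{kida}.

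The second and more structural problem is the genus-one family $S_{1,p}$, $p\geq 3$, which your hypotheses include. There the h-curve-configuration approach is simply unavailable: as the paper notes after Definition \ref{defn-share}, no pair of h-curves in a genus-one surface satisfies the sharing-pair condition, so there is no supply of ``handles cobounding a prescribed subsurface together with $c$'' to build $\Sigma(c)$ from, and no amount of augmentation by p-curves produces a substitute, because every separating curve in $S_{1,p}$ is disjoint from \emph{every} non-separating curve up to isotopy within one of its complementary pieces only in the trivial sense --- the link of a separating curve in $\calc_s(S_{1,p})$ carries no marker singling out an individual non-separating curve. In \cite{kida} (and in Section \ref{sec-g1} of this paper for the superinjective analogue) genus one is handled by an entirely different scheme: one shows the automorphism preserves $q$-HBCs, identifies the link of a $q$-HBC with $\calc_s(S_{1,p-1})$, $\calc(S_{0,p+1})$, or a join of such, and runs an induction on $p$ whose base case $S_{1,3}$ is settled via hexagons and the Farey-type graphs $\cal{F}$ and $\cal{D}$, with the sphere case supplied by Theorem \ref{thm-cc}/\ref{thm-sha}. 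So the proof cannot be made uniform across the three families in the way you propose; you would need to split off genus one and argue it by this separate induction, and your flagged worry about $S_{1,3}$ is not an ``ad hoc refinement'' issue but a sign that the method itself does not apply there.
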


 %%%%%%%%%%%%%%%%%%%%%%%%%%%%%%%%%%%%%%%%%%%%%%

\section{Graph $\cal{D}$}\label{sec-d}

Throughout this section, we put $R=S_{1, 2}$ and focus on the simplicial graph $\cal{D}=\cal{D}(R)$ defined as follows.

\medskip

\noindent {\bf Graph $\cal{D}=\cal{D}(R)$.} The set of vertices of $\cal{D}$ is defined to be $V_s(R)$ and denoted by $V(\cal{D})$. 
Two vertices $\alpha, \beta \in V(\cal{D})$ are connected by an edge of $\cal{D}$ if and only if we have $i(\alpha, \beta)=4$.

\medskip

The aim of this section is to prove the following:

\begin{prop}\label{prop-d}
Any injective simplicial map from $\cal{D}$ into itself is surjective.
\end{prop}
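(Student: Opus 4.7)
The plan is to produce a concrete combinatorial model of $\cal{D}$ under which it becomes a well-known rigid graph, and then invoke that rigidity. Every separating curve $\alpha$ in $R=S_{1,2}$ has the same topological type: it splits $R$ into a pair of pants containing both components of $\partial R$ and a one-holed torus $H_\alpha$. I would first recover $\alpha$ from an essential simple arc $\ell\subset R$ joining the two boundary components of $R$, namely as the non-peripheral boundary component of a regular neighborhood of $\partial_1\cup \ell\cup\partial_2$. This induces a bijection between $V(\cal{D})$ and the set of essential arcs from $\partial_1$ to $\partial_2$ modulo the subgroup generated by the boundary Dehn twists $t_{\partial_1}$ and $t_{\partial_2}$. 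Choosing a reference arc $\ell_0$ with companion curve $\alpha_0$, together with a basis of $H_1(H_{\alpha_0})$, then assigns a slope in $\mathbb{Q}\cup\{\infty\}$ to each such arc and hence identifies $V(\cal{D})$ with $\mathbb{Q}\cup\{\infty\}$.

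Next I would show that the edge condition $i(\alpha,\beta)=4$ matches the Farey edge condition on slopes. A surgery argument that tracks how each transverse arc-crossing of $\ell,\ell'$ contributes four transverse crossings of the bounding curves $\alpha_\ell,\alpha_{\ell'}$ should yield an intersection formula of the form $i(\alpha_\ell,\alpha_{\ell'})=4|ps-qr|$, where $p/q$ and $r/s$ are the slopes assigned to $\ell$ and $\ell'$. In particular $i(\alpha_\ell,\alpha_{\ell'})=4$ precisely when $|ps-qr|=1$, so $\cal{D}$ is isomorphic to the Farey graph $\cal{F}$.

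With this identification, Proposition \ref{prop-d} reduces to the classical rigidity statement that every injective simplicial self-map of $\cal{F}$ is surjective. The standard proof of that rigidity uses (a) connectedness of $\cal{F}$ and (b) the fact that each edge of $\cal{F}$ lies in exactly two triangles. For any injective simplicial $\psi\colon \cal{F}\rightarrow \cal{F}$ and any edge $\{v,w\}$, the two vertices $u_1,u_2$ completing $\{v,w\}$ to triangles are sent to two distinct triangle-completions of $\{\psi(v),\psi(w)\}$; by (b) applied to the image edge these must exhaust all its triangle-completions, so $\psi$ acts bijectively on the two-element set of triangle-completing vertices of each edge. Propagating this along a connected spanning set of edges yields surjectivity of $\psi$ on vertices.

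The main obstacle is the intersection-number computation in the second step: verifying on the nose both the factor $4$ and the Farey determinant $|ps-qr|$ requires care with the surgery bookkeeping, including checking how boundary Dehn twists cancel out in the quotient that defines the slope. Should this identification prove delicate, an alternative is to argue intrinsically on $\cal{D}$: prove connectedness and the exact ``two-triangles-per-edge'' property by direct surgery on separating curves in $R$, enumerating all $\beta \in V_s(R)$ with $i(\alpha,\beta)=i(\alpha',\beta)=4$ for a fixed edge $\{\alpha,\alpha'\}$ of $\cal{D}$. Either route then combines connectedness with the two-triangles property to deliver the desired surjectivity.
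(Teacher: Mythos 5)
Your argument hinges on the claim that $\cal{D}$ is isomorphic to the Farey graph, and that claim is false; the fallback ``two triangles per edge'' property is false as well. The set of isotopy classes of essential arcs in $R=S_{1,2}$ joining $\partial_1$ to $\partial_2$ is strictly larger than $\mathbb{Q}\cup\{\infty\}$, and no slope is well defined: the stabilizer of such an arc class in $\pmod(R)$ has infinite index ``in too many directions.'' Concretely, fix an edge $\{ \alpha,\beta\}$ of $\cal{D}$. Cutting $R$ along $l_{\alpha}$ and $l_{\beta}$ yields an annulus whose core is the non-separating curve $c(\alpha,\beta)$, and a common neighbor $\gamma$ of $\alpha$ and $\beta$ corresponds to an essential arc in that annulus running between its two boundary circles. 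Such arcs come in infinitely many isotopy classes, permuted by the Dehn twist $t_{c(\alpha,\beta)}$, which fixes $\alpha$ and $\beta$ while moving $\gamma$ to infinitely many pairwise distinct common neighbors. Hence every edge of $\cal{D}$ lies in infinitely many triangles, so $\cal{D}$ cannot be the Farey graph (in the Farey graph the link of a vertex is a line, whereas the link $L$ of a vertex of $\cal{D}$ surjects onto a Farey graph with infinite fibers), and the rigidity argument you propose to propagate along edges has nothing to bite on. The intersection formula $i(\alpha_{\ell},\alpha_{\ell'})=4|ps-qr|$ likewise cannot hold because its right-hand side is not defined.

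The correct finiteness is one level down: for a fixed vertex $\alpha$ with link $L$, the map $\pi\colon L\rightarrow\cal{F}(H_{\alpha})$, $\beta\mapsto c(\alpha,\beta)$, fibers $L$ over the Farey graph of the handle with fibers that are infinite discrete orbits of the half twist about $\alpha$; cutting $R$ along the three arcs $l_{\alpha}, l_{\beta}, l_{\gamma}$ of a triangle $\{\alpha,\beta,\gamma\}$ produces a disk, and counting arcs in that disk shows that each edge of $L$ lies in exactly \emph{three} triangles of $L$ (not two, and only after passing to the link). Combining this with chain-connectedness of triangles in $L$ (proved via $\pi$ and the explicit picture of $\pi^{-1}$ of a Farey triangle) and connectedness of $\cal{D}$ gives surjectivity. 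So the overall template (connectivity plus a local triangle count) is salvageable, but it must be run inside the link of a vertex with the count ``three,'' not on $\cal{D}$ itself with the count ``two''; as written, both your main route and your alternative fail at the same point.
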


We fix the notation employed throughout this section.
Let $\partial_1$ and $\partial_2$ denote the two boundary components of $R$.
We note that there is a one-to-one correspondence between the isotopy classes of separating curves in $R$ and essential simple arcs in $R$ connecting $\partial_1$ and $\partial_2$, where isotopy of essential simple arcs in $R$ may move the end points of arcs, keeping them staying in $\partial R$. 
Namely, one associates to a separating curve $\alpha$ in $R$ an arc connecting $\partial_1$ and $\partial_2$ and disjoint from $\alpha$, which is uniquely determined up to isotopy.
This arc is denoted by $l_{\alpha}$ (see Figure \ref{fig-nsarc} (a)).
Conversely, for each essential simple arc $l$ in $R$ connecting $\partial_1$ and $\partial_2$, the separating curve in $R$ corresponding to $l$ is obtained as a boundary component of a regular neighborhood of the union $l\cup \partial R$ in $R$.

\begin{figure}
\includegraphics[width=11cm]{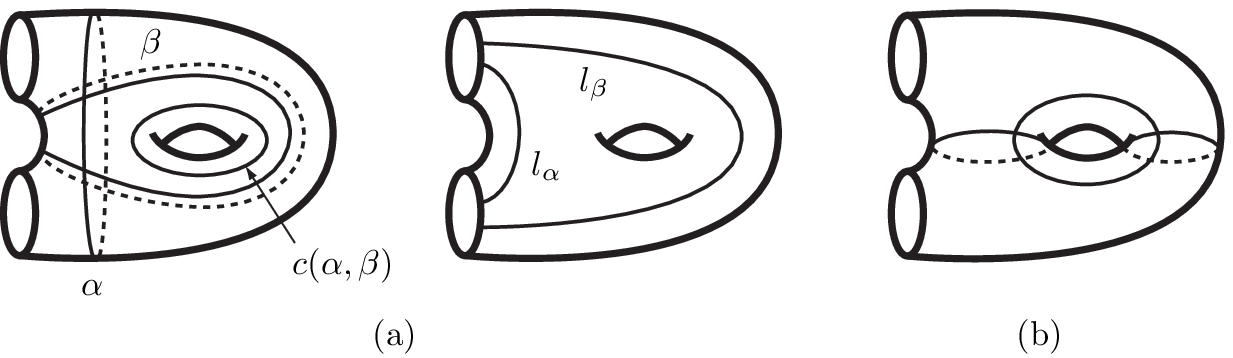}
\caption{}\label{fig-nsarc}
\end{figure}

Note that if for each $k=1, 2$, $l_1^k$ and $l_2^k$ are essential simple arcs in $R$ such that
\begin{itemize}
\item each of $l_1^k$ and $l_2^k$ connects $\partial_1$ and $\partial_2$; and
\item $l_1^k$ and $l_2^k$ are disjoint and non-isotopic,
\end{itemize}
then there exists a homeomorphism $F$ from $R$ onto itself preserving an orientation of $R$ and satisfying $F(\partial_1)=\partial_1$, $F(\partial_2)=\partial_2$ and $F(l_j^1)=l_j^2$ for each $j=1, 2$.
For if we cut $R$ along $l_1^k$ and $l_2^k$, then we obtain an annulus $A_k$.
One can then construct a homeomorphism from $A_1$ onto $A_2$ sending arcs in $\partial A_1$ corresponding to $l_j^1$ to arcs in $\partial A_2$ corresponding to $l_j^2$ for each $j=1, 2$ and inducing a desired homeomorphism from $R$ onto itself.

\begin{lem}\label{lem-arc-disjoint}
For any two distinct vertices $\alpha, \beta \in V(\cal{D})$, we have $i(\alpha, \beta)=4$ if and only if $l_{\alpha}$ and $l_{\beta}$ are disjoint.
\end{lem}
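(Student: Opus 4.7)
The plan is to prove the two implications separately, handling the backward direction by its contrapositive.

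For the forward direction, assume $l_\alpha$ and $l_\beta$ are disjoint. The homogeneity observation immediately preceding the lemma produces an orientation-preserving self-homeomorphism of $R$ taking any disjoint, non-isotopic pair of such arcs to any other, while fixing $\partial_1$ and $\partial_2$ setwise. Since $\alpha$ and $\beta$ are, up to isotopy, determined by $l_\alpha$ and $l_\beta$ as the outer boundaries of regular neighborhoods of $l_\alpha \cup \partial R$ and $l_\beta \cup \partial R$, and since $i(\alpha, \beta)$ is a homeomorphism invariant, this reduces the computation to a single explicit model. I would draw $R$ as a torus with two boundary disks placed side by side, draw $l_\alpha, l_\beta$ as two parallel arcs between them, and read off from the picture that $\alpha$ and $\beta$ meet transversely in exactly four points with no bigons. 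Equivalently, cutting $R$ along $l_\alpha \cup l_\beta$ yields an annulus (by an Euler-characteristic count), on which $\alpha$ and $\beta$ appear as a pair of arcs whose four endpoints interlace on the two boundary circles, manifestly giving $i(\alpha,\beta)=4$.

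For the backward direction I would prove the contrapositive: if $l_\alpha$ and $l_\beta$ meet essentially, then $i(\alpha,\beta)\neq 4$. Put $l_\alpha$ and $l_\beta$ in minimal position with $k:=|l_\alpha\cap l_\beta|\geq 1$, and realize $\alpha$ and $\beta$ as boundaries of thin regular neighborhoods of $l_\alpha\cup\partial R$ and $l_\beta\cup\partial R$. Each of $\alpha,\beta$ consists of two parallel copies of its defining arc, connected by two arcs running almost once around $\partial_1$ and $\partial_2$. Intersections between $\alpha$ and $\beta$ split into two types: (a) near each of the $k$ interior crossings of $l_\alpha$ and $l_\beta$, the two parallel copies of $l_\alpha$ locally cross the two parallel copies of $l_\beta$ in $2\cdot 2 = 4$ transverse points, contributing $4k$ in total; (b) near $\partial_1$ and $\partial_2$, the same local configuration used in the forward direction produces four more crossings, two near each boundary component. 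Together these give $i(\alpha,\beta) \leq 4 + 4k$, and a bigon-elimination argument upgrades this to $i(\alpha,\beta)=4+4k\geq 8\neq 4$.

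The main obstacle is this last bigon step: a priori the count $4+4k$ is only an upper bound, and one must check that no bigons between $\alpha$ and $\beta$ can cut the intersection down to $4$. I would handle it by the standard innermost-bigon argument, inspecting the local picture at each of the $k$ crossings of $l_\alpha,l_\beta$ and in a collar of each boundary component; since $l_\alpha$ and $l_\beta$ are themselves in minimal position, any innermost bigon between $\alpha$ and $\beta$ would descend to a bigon between $l_\alpha$ and $l_\beta$ or to a bigon in a disk neighborhood of $\partial_1$ or $\partial_2$, and neither is possible. This bookkeeping aside, the argument is essentially a direct application of the uniqueness of disjoint arc configurations discussed at the start of the section.
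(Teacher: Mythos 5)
Your ``if'' direction is fine and is essentially the paper's argument: use the homogeneity observation preceding the lemma to reduce to one explicit model and verify $i(\alpha,\beta)=4$ there (the paper does the model computation via the intersection-number criterion of Expos\'e 3, Proposition 10 of \cite{flp}, which amounts to the same thing as your bigon check). The gap is in the ``only if'' direction, and it sits exactly where you flagged it: the claim that the configuration with $4+4k$ crossings is already in minimal position. Your justification --- that an innermost bigon between $\alpha$ and $\beta$ would ``descend to a bigon between $l_{\alpha}$ and $l_{\beta}$ or to a bigon in a disk neighborhood of $\partial_1$ or $\partial_2$'' --- is not a proof, and the dichotomy it rests on is incomplete. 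A bigon between $\alpha$ and $\beta$ is bounded by subarcs of the two curves, and each such subarc may traverse both parallel copies of $l_{\alpha}$ (resp.\ $l_{\beta}$) \emph{and} the strands running around $\partial_1$ or $\partial_2$; after collapsing the regular neighborhoods, its $\alpha$-side becomes a path in the graph $l_{\alpha}\cup \partial R$ that can run along $l_{\alpha}$, all the way around a boundary circle, and back along $l_{\alpha}$ in the opposite direction. A disk bounded by a subarc of $l_{\beta}$ and such a path is neither a bigon between the two arcs nor anything visible in a disk neighborhood of a boundary component, so it is not excluded by your two cases; nor do you address half-bigons against $\partial R$, which is what minimal position for free-endpoint arcs actually forbids. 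Since your whole contrapositive rests on the lower bound $i(\alpha,\beta)\geq 8$, and a lower bound requires a genuine minimality argument, this is a real gap rather than bookkeeping. (All you actually need is $i(\alpha,\beta)\neq 4$ when $l_{\alpha}$ and $l_{\beta}$ intersect, but your only route to that is through the unproved minimality claim.)

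It is worth noting that the paper's proof of the hard direction avoids intersection formulas entirely and runs the implication the other way: starting from representatives $A$, $B$ with $\left|A\cap B\right|=4$, it cuts $R$ along $A$ into a handle $H$ and a pair of pants $P$, shows that $B\cap H$ and $B\cap P$ each consist of two isotopic essential arcs, pins down which endpoints on $A$ these arcs join, and then explicitly builds disjoint representatives of $l_{\beta}$ and $l_{\alpha}$ in the complement of $B$ and $A$. If you want to salvage your contrapositive instead, the cleanest repair is to argue via the decomposition along $\alpha$: in minimal position $i(\alpha,\beta)$ equals twice the number of components of $\beta\cap H_{\alpha}$, and one must show that when $i(l_{\alpha},l_{\beta})\geq 1$ at least four of the $2(k+1)$ arcs of $\beta\cap H_{\alpha}$ survive as essential arcs --- which is precisely the analysis your sketch omits.
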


\begin{proof}
Using the criterion on intersection numbers in Expos\'e 3, Proposition 10 of \cite{flp}, one can check that the curves $\alpha$ and $\beta$ described in Figure \ref{fig-nsarc} (a) satisfy $i(\alpha, \beta)=4$.
The ``if" part thus follows from the argument right before the lemma.

Pick two vertices $\alpha$, $\beta$ of $\cal{D}$ with $i(\alpha, \beta)=4$.
Let $A$ and $B$ be representatives of $\alpha$ and $\beta$, respectively, with $|A\cap B|=4$.
We denote by $H$ the handle cut off by $A$ from $R$ and naturally identify $H$ with a subset of $R$.
The intersection $B\cap H$ consists of two simple arcs in $H$, denoted by $b_1$ and $b_2$.
Neither $b_1$ nor $b_2$ are isotopic relative to their end points to an arc in $\partial H$ because $A$ and $B$ intersect minimally.
It follows that $b_1$ and $b_2$ are essential simple arcs in $H$.
The arcs $b_1$ and $b_2$ are isotopic because otherwise $\beta$ would be non-separating in $R$.

We denote by $P$ the pair of pants cut off by $A$ from $R$ and naturally identify it with a subset of $R$.
The intersection $B\cap P$ consists of two essential simple arcs in $P$, which are isotopic.
Let $b_3$ and $b_4$ denote the two components of $B\cap P$.

Fix an orientation of $A$.
For each $j=1, 2$, we put $\partial b_j=\{ p_j, q_j \}$ so that $p_1$, $q_1$, $q_2$ and $p_2$ appear along $A$ in this order.
For each $k=3, 4$, the arc $b_k$ connects neither $p_1$ and $q_1$ nor $p_2$ and $q_2$ because otherwise $b_k$ and either $b_1$ or $b_2$ would form a simple closed curve.
For each $k=3, 4$, the arc $b_k$ connects neither $p_1$ and $q_2$ nor $p_2$ and $q_1$ because $b_k$ is separating in $P$.
It turns out that $b_3$ and $b_4$ connect either $p_1$ and $p_2$ or $q_1$ and $q_2$.

Let $I$ and $J$ denote the components of $A\setminus \{ p_1, p_2\}$ and $A\setminus \{ q_1, q_2\}$, respectively, that contain no point of $A\cap B$.
Note that $I$ and $J$ lie in the same component of $H\setminus B$.
We may assume that $I$ and $\partial_1$ (resp.\ $J$ and $\partial_2$) lie in the same component of $P\setminus B$.
Pick essential simple arcs $r_1$ and $r_2$ in $P$ such that
\begin{itemize}
\item $r_1$ connects a point of $\partial_1$ with a point of $I$, and $r_2$ connects a point of $\partial_2$ with a point of $J$; and
\item both $r_1$ and $r_2$ are disjoint from $B\cap P$.
\end{itemize}
Since $I$ and $J$ lie in the same component of $H\setminus B$, we can find an essential simple arc $r_3$ in $H$ disjoint from $B\cap H$ and connecting the point of $r_1\cap I$ with the point of $r_2\cap J$.
We define $r$ as the union $r_1\cup r_2\cup r_3$, which is an essential simple arc in $R$ connecting $\partial_1$ and $\partial_2$ and disjoint from $B$.
Pick an essential simple arc $l$ in $P$ connecting $\partial_1$ and $\partial_2$ and disjoint from $r_1$ and $r_2$.
Since $l$ is an essential simple arc in $R$ disjoint from $A$ and $r$, the ``only if" part of the lemma follows.
\end{proof}

The last lemma and the observation right before the lemma imply that for any two edges $\{ \alpha_1, \beta_1\}$, $\{ \alpha_2, \beta_2\}$ of $\cal{D}$, there exists an element $f$ of $\pmod(R)$ with $f(\alpha_1)=\alpha_2$ and $f(\beta_1)=\beta_2$.
For any edge $\{ \alpha, \beta \}$ of $\cal{D}$, we can find a non-separating curve in $R$ disjoint from $\alpha$ and $\beta$, which is uniquely determined up to isotopy, because the surface obtained by cutting $R$ along $l_{\alpha}$ and $l_{\beta}$ is an annulus.
This non-separating curve is denoted by $c(\alpha, \beta)\in V(R)$ (see Figure \ref{fig-nsarc} (a)).

\subsection{Geometric properties of $\cal{D}$}\label{subsec-fiber}

The following basic property of $\cal{D}$ is shown by applying Putman's idea in Lemma 2.1 of \cite{putman-conn}.

\begin{lem}\label{lem-d-conn}
The graph $\cal{D}$ is connected.
\end{lem}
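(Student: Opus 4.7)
The plan is to apply the method of Putman from \cite[Lemma~2.1]{putman-conn}. The group $\pmod(R)$ acts simplicially on $\cal{D}$, and its action on $V(\cal{D}) = V_s(R)$ is transitive: every separating curve in $R = S_{1,2}$ cuts $R$ into a handle $S_{1,1}$ and a pair of pants $S_{0,3}$ containing $\partial_1$ and $\partial_2$ (any other potential decomposition would force the curve to bound a disk, be isotopic to a boundary component, or cobound an annulus with a boundary), and the change-of-coordinates principle then produces a pure mapping class sending any such curve to any other. Putman's criterion thus reduces the lemma to fixing a base vertex $\alpha \in V(\cal{D})$ and showing, for each Dehn twist $t_c$ in a generating set of $\pmod(R)$ by twists about the curves of Figure~\ref{fig-mcg}(a), that $\alpha$ and $t_c(\alpha)$ lie in a common component of $\cal{D}$.

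The reduction I would use is this: if $\beta \in V(\cal{D})$ satisfies $i(\alpha,\beta) = 4$ and $i(c,\beta) = 0$, then $t_c(\beta) = \beta$, and since $t_c$ is a simplicial automorphism of $\cal{D}$ preserving geometric intersection numbers, $\{t_c(\alpha),\beta\} = t_c\{\alpha,\beta\}$ is again an edge of $\cal{D}$. Hence $\alpha$--$\beta$--$t_c(\alpha)$ is an edge path. When $c$ is disjoint from $\alpha$ we have $t_c(\alpha) = \alpha$ and there is nothing to prove.

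For each remaining generator $c$ intersecting $\alpha$, I would produce a separating curve $\beta$ disjoint from $c$ with $i(\alpha,\beta) = 4$. By Lemma~\ref{lem-arc-disjoint} this is equivalent to producing an essential simple arc $l_\beta$ from $\partial_1$ to $\partial_2$, disjoint from $l_\alpha$, whose associated boundary curve (a component of the boundary of a regular neighborhood of $l_\beta \cup \partial R$) is disjoint from $c$. I would arrange for $\alpha$ to be drawn compatibly with Figure~\ref{fig-mcg}(a) so that in each remaining case $l_\beta$ is visible by inspection in the complement of $c$. The main obstacle is precisely this finite geometric case analysis: the constraint of being disjoint from $c$ while still meeting $\alpha$ in exactly four points is restrictive, and an infelicitous choice of $\alpha$ could force one to fall back on longer detour paths in $\cal{D}$ instead of the clean two-edge path above.
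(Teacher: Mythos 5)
Your argument is essentially the paper's: it runs Putman's criterion over the Gervais twist generators of $\pmod(R)$, using transitivity of $\pmod(R)$ on $V(\cal{D})$ to write any vertex as $h_1\cdots h_n\alpha$ and then transporting adjacency along the resulting sequence. The only difference is the final verification: the paper chooses $\alpha$ and the generating curves (Figure \ref{fig-nsarc}(b)) so that $i(\alpha,h\alpha)\in\{0,4\}$ for every generator $h$, i.e.\ $\alpha$ and $h\alpha$ are already equal or adjacent, so the intermediate twist-fixed vertex $\beta$ and the case analysis you defer (correctly, it would close, but it is unnecessary) are avoided entirely.
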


\begin{proof}
Let $\alpha$ be the curve in Figure \ref{fig-nsarc} (a).
We pick a vertex $\gamma \in V(\cal{D})$ and show that $\alpha$ and $\gamma$ can be connected by a path in $\cal{D}$.
We define $T$ as the set consisting of the Dehn twists about the curves in Figure \ref{fig-nsarc} (b) and their inverses.
It is known that $\pmod(R)$ is generated by $T$ (see \cite{gervais}).
Since $\alpha$ and $\gamma$ are sent to each other by an element of $\pmod(R)$, we can find elements $h_1,\ldots, h_n$ of $T$ with $\gamma =h_1\cdots h_n\alpha$.
We note that for each $h\in T$, either $h\alpha =\alpha$ or $h\alpha$ and $\alpha$ are connected by an edge of $\cal{D}$.
The sequence of vertices of $\cal{D}$,
\[\alpha,\ h_1\alpha,\ h_1h_2\alpha,\ \ldots,\ h_1\cdots h_n\alpha =\gamma,\]
therefore forms a path in $\cal{D}$.  
\end{proof}

We make observation on a fibered structure in the link of each vertex of $\cal{D}$.
To describe it, we recall simplicial graphs associated to $S_{1, 1}$ and to $S_{0, 4}$.

\medskip

\noindent {\bf Graph $\cal{F}(X)$.} Let $X$ be a surface homeomorphic to $S_{1, 1}$ or $S_{0, 4}$.
We define $\cal{F}(X)$ as the simplicial graph such that the set of vertices of $\cal{F}(X)$ is $V(X)$ and two vertices $\alpha, \beta \in V(X)$ are connected by an edge of $\cal{F}(X)$ if and only if we have $i(\alpha, \beta)=1$ when $X$ is homeomorphic to $S_{1, 1}$, and we have $i(\alpha, \beta)=2$ when $X$ is homeomorphic to $S_{0, 4}$.

\medskip

It is known that $\cal{F}(X)$ is isomorphic to the Farey graph (see Section 3.2 in \cite{luo}).
We mean by a {\it triangle} of a simplicial graph $\cal{G}$ a subgraph of $\cal{G}$ consisting of three vertices and three edges.
Let us say that two triangles $\Delta$, $\Delta'$ in a simplicial graph $\cal{G}$ are {\it chain-connected} in $\cal{G}$ if there exists a sequence of triangles of $\cal{G}$, $\Delta_1,\ldots, \Delta_n$, with $\Delta_1=\Delta$ and $\Delta_n=\Delta'$ and with $\Delta_j\cap \Delta_{j+1}$ an edge of $\cal{G}$ for each $j=1,\ldots, n-1$.
The following properties of the Farey graph $\cal{F}$ are notable:
\begin{itemize}
\item Any vertex of $\cal{F}$ is contained in a triangle of $\cal{F}$.
\item Any two triangles of $\cal{F}$ are chain-connected in $\cal{F}$.
\item For any edge $e$ of $\cal{F}$, there exist exactly two triangles of $\cal{F}$ containing $e$.
\end{itemize}
Using these facts, one can show that any injective simplicial map from $\cal{F}$ into itself is surjective.

In the rest of this subsection, we fix a vertex $\alpha \in V(\cal{D})$.
We define $L$ to be the link of $\alpha$ in $\cal{D}$ and define $V(L)$ to be the set of vertices of $L$.
We denote by $H$ the handle cut off by $\alpha$ from $R$ and denote by $\cal{F}$ the graph $\cal{F}(H)$ defined above.

Let $\pi \colon L\rightarrow \cal{F}$ be the simplicial map defined by $\pi(\beta)=c(\alpha, \beta)$ for each $\beta \in V(L)$.
Simpliciality of $\pi$ is proved as follows.
If $\{ \beta, \gamma \}$ is an edge of $L$, then one can find essential simple arcs $l_{\alpha}$, $l_{\beta}$ and $l_{\gamma}$ in $R$ such that
\begin{itemize}
\item for each $\delta \in \{ \alpha, \beta, \gamma \}$, $l_{\delta}$ connects $\partial_1$ and $\partial_2$ and is disjoint from a representative of $\delta$; and 
\item $l_{\alpha}$, $l_{\beta}$ and $l_{\gamma}$ are pairwise disjoint.
\end{itemize}
Let $Q$ denote the surface obtained by cutting $R$ along $l_{\alpha}$, which is a handle.
Note that $\pi(\beta)$ (resp.\ $\pi(\gamma)$) is the only curve in $Q$ disjoint from $l_{\beta}$ (resp.\ $l_{\gamma}$).
Since $l_{\beta}$ and $l_{\gamma}$ are disjoint, we obtain either $\pi(\beta)=\pi(\gamma)$ or $i(\pi(\beta), \pi(\gamma))=1$.

Let $h\in \mod(R)$ be the half twist about $\alpha$ exchanging $\partial_1$ and $\partial_2$ and being the identity on $H$, which satisfies $h^2=t_{\alpha}$. 
We now describe the fiber of $\pi$ over a triangle of $\cal{F}$.

\begin{lem}\label{lem-d-line}
Pick two curves $b$, $c$ in $H$ with $i(b, c)=1$. 
We set
\[B=\{ \, \beta \in V(L)\mid \pi(\beta)=b\, \},\quad \Gamma =\{ \, \gamma \in V(L)\mid \pi(\gamma)=c\, \}.\]
Then we have a numbering of elements, $B=\{ \beta_n\}_{n\in \mathbb{Z}}$ and $\Gamma =\{ \gamma_m\}_{m\in \mathbb{Z}}$, such that
\begin{itemize}
\item $h(\beta_n)=\beta_{n+1}$ and $h(\gamma_m)=\gamma_{m+1}$ for any $n, m\in \mathbb{Z}$; and
\item the full subgraph of $\cal{D}$ spanned by $B\cup \Gamma$ is the bi-infinite line with $\beta_n$ adjacent to $\gamma_n$ and $\gamma_{n+1}$ for each $n\in \mathbb{Z}$.
\end{itemize}
\end{lem}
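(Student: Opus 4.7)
The strategy is to realize $\{\beta_n\}$ and $\{\gamma_m\}$ as $\langle h\rangle$-orbits of reference vertices, verify exhaustion of $B$ and $\Gamma$, and then check adjacencies. I would start by choosing $\beta_0\in B$ and $\gamma_0\in \Gamma$ with $\{\beta_0,\gamma_0\}$ an edge of $\cal D$, realized by mutually disjoint arcs $l_\alpha, l_{\beta_0}, l_{\gamma_0}$ so that $c(\alpha,\beta_0)=b$ and $c(\alpha,\gamma_0)=c$. Set $\beta_n=h^n(\beta_0)$ and $\gamma_m=h^m(\gamma_0)$. Since $h$ fixes $\alpha$ and is the identity on $H$, both $b$ and $c$ are fixed by $h$, so $\beta_n\in B$ and $\gamma_m\in \Gamma$. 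Freeness of the $\langle h\rangle$-action follows from $h^2=t_\alpha$ and $i(\alpha,\beta_0)=4\neq 0$, which force $t_\alpha^k(\beta_0)=\beta_0\Leftrightarrow k=0$, together with $h(\beta_0)\neq \beta_0$ (else $h^2$ would fix $\beta_0$), which rules out odd powers as well.

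The main step is to show these orbits exhaust $B$ and $\Gamma$. Given $\beta\in B$, the $\pmod(R)$-transitivity on edges of $\cal D$ noted just after Lemma \ref{lem-arc-disjoint} yields $f\in \pmod(R)$ with $f(\alpha)=\alpha$ and $f(\beta_0)=\beta$; automatically $f(b)=c(\alpha,\beta)=b$, so $f\in \stab_{\pmod(R)}(\alpha,b)$. I would analyze this stabilizer by cutting $R$ along $\alpha$: it is built from $\stab_{\pmod(H)}(b)$ and $\pmod(P)$ glued along $t_\alpha$, and on isotopy classes of arcs $l_\beta$ the twists $t_b, t_{\partial_1}, t_{\partial_2}$ act trivially ($t_b$ because $l_\beta$ is disjoint from $b$, $t_{\partial_i}$ because the endpoint of $l_\beta$ on $\partial_i$ can be isotoped past the twist). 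Hence $\stab_{\pmod(R)}(\alpha,b)\cdot \beta_0=\{h^{2n}(\beta_0)\}_{n\in \mathbb{Z}}$. Running the same transitivity argument in $\mod(R)$ and noting that $h$ represents the unique nontrivial coset of $\stab_{\pmod(R)}(\alpha,b)$ in $\stab_{\mod(R)}(\alpha,b)$ (since $h$ swaps $\partial_1,\partial_2$) supplies the complementary coset $\{h^{2n+1}(\beta_0)\}_{n\in \mathbb{Z}}$, giving $B=\{h^n(\beta_0)\}_{n\in \mathbb{Z}}$; the argument for $\Gamma$ is identical.

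The adjacency claim is verified from the arc picture. By choosing the initial arcs so that $l_{\beta_0}$ is disjoint from both $l_{\gamma_0}$ and $l_{\gamma_1}=h(l_{\gamma_0})$ (which is possible because these three arcs fit together in the pair of pants obtained by cutting $R$ along $l_\alpha$ without forced crossings), one obtains $\beta_0\sim \gamma_0$ and $\beta_0\sim \gamma_1$; applying $h^n$ propagates these into $\beta_n\sim \gamma_n$ and $\beta_n\sim \gamma_{n+1}$. No other adjacencies exist: pairs $l_{\beta_n},l_{\gamma_m}$ with $|m-n|\ge 2$ differ by more than one half-twist around $\alpha$ and must intersect, and two distinct arcs both representing elements of $B$ cannot be disjoint, since their disjointness (together with $l_\alpha$) would be inconsistent with $c(\alpha,\cdot)=b$ for both.

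The main obstacle is the stabilizer analysis in step two: one must rule out an "exotic" element of $\stab_{\pmod(R)}(\alpha,b)$ that would enlarge the $\pmod$-orbit beyond $\{h^{2n}(\beta_0)\}$. The chief concern is a hyperelliptic-type involution $\sigma\in \pmod(H)$ fixing $b$; however, $\sigma$ reverses the orientation of $\alpha=\partial H$, whereas every element of $\pmod(P)$ is a product of boundary Dehn twists and induces the identity on $\alpha$ up to isotopy. No matching extension of $\sigma$ therefore exists in $\pmod(R)$, so this symmetry appears only in the $\mod(R)$-coset represented by $h$, yielding exactly the $\langle h\rangle$-orbit structure required.
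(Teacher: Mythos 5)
Your overall strategy (realize $B$ and $\Gamma$ as $\langle h\rangle$-orbits, prove exhaustion, then check adjacencies) is reasonable, but the exhaustion step contains a genuine error, visible already from the internal logic of your own argument. The transitivity of $\pmod(R)$ on ordered edges of $\cal{D}$ that you invoke gives not merely the inclusion $B\subset \stab_{\pmod(R)}(\alpha,b)\cdot\beta_0$ but the equality $B=\stab_{\pmod(R)}(\alpha,b)\cdot\beta_0$, since any $f$ in this stabilizer preserves intersection numbers with $\alpha$ and satisfies $c(\alpha,f(\beta_0))=f(b)=b$. As $h$ fixes $\alpha$ and $b$, we have $h(\beta_0)\in B$, while your own freeness argument shows $h(\beta_0)\notin\{h^{2n}(\beta_0)\}_{n\in\mathbb{Z}}$; hence the conclusion $\stab_{\pmod(R)}(\alpha,b)\cdot\beta_0=\{h^{2n}(\beta_0)\}_{n\in\mathbb{Z}}$ cannot be correct, and the later appeal to $\mod(R)$ to ``supply the odd coset'' is supplying something the $\pmod(R)$-orbit already contains. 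The precise point of failure is the dismissal of the hyperelliptic involution $\sigma$ of $H$. An orientation-preserving homeomorphism of $H$ mapping $\partial H$ to itself necessarily preserves the induced boundary orientation, and indeed near an interior fixed point $\sigma$ is $z\mapsto -z$, so its restriction to $\partial H=\alpha$ is rotation by $\pi$, orientation-preserving and isotopic to the identity --- not orientation-reversing as you assert. Consequently $\sigma$, adjusted to be the identity on a collar of $\partial H$, extends by the identity on $P$ to an element of $\stab_{\pmod(R)}(\alpha,b)$; this is exactly the ``exotic'' element you tried to exclude, and it sends $\beta_0$ to an odd power $h^{2n+1}(\beta_0)$. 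Without accounting for it, the exhaustion argument does not close.

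A secondary issue is that the non-adjacency claims at the end (that $l_{\beta_n}$ and $l_{\gamma_m}$ ``must intersect'' when $|m-n|\geq 2$, and that two arcs representing elements of $B$ cannot be disjoint) are asserted rather than proved; note in particular that disjointness of two arcs in $B$ does not on its face contradict $c(\alpha,\cdot)=b$ for both, since $b$ is disjoint from all of them. The paper takes a different and more robust route for both steps: it transfers the problem to the Farey graph $\cal{F}(R_b)$ of the four-holed sphere $R_b$, where each $\beta\in B$ lies in a diagonal position of two adjacent triangles relative to $\alpha$, so that exhaustion follows from transitivity of $\langle h\rangle$ on the triangles of $\cal{F}(R_b)$ containing $\alpha$; the adjacency structure is then settled by the explicit computations $i(\beta_n,\beta_m)=8\left|n-m\right|$ and $i(\beta_0,\gamma_n)=4\left|2n-1\right|$ using the criterion of \cite{flp}. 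If you wish to keep the stabilizer approach, you must include $\sigma$ among the generators and compute its action on $B$, which in effect forces you back to an explicit model of this kind.
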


\begin{proof}
We describe the curves $b$ and $c$ as in Figure \ref{fig-dline} (a) and define $\beta_0$ as the curve in $R$ described in Figure \ref{fig-dline} (b).
Note that $\beta_0$ belongs to $B$.
We say that two vertices $u$, $v$ of a simplicial graph $\cal{G}$ {\it lie in a diagonal position of two adjacent triangles of $\cal{G}$} if there exist two triangles $\Delta_1$, $\Delta_2$ of $\cal{G}$ such that $u\in \Delta_1$, $v\in \Delta_2$ and $\Delta_1\cap \Delta_2$ is an edge of $\cal{G}$ containing neither $u$ nor $v$.
One can check that the two vertices $\alpha$, $\beta_0$ of $\cal{F}(R_b)$ lie in a diagonal position of two adjacent triangles of $\cal{F}(R_b)$.
It follows that for each vertex $\beta$ of $B$, $\alpha$ and $\beta$ lie in a diagonal position of two adjacent triangles of $\cal{F}(R_b)$ because any two edges of $\cal{D}$ are sent to each other by an element of $\pmod(R)$.
Since the cyclic group generated by $h$ acts transitively on the set of triangles of $\cal{F}(R_b)$ containing $\alpha$, it also acts transitively on the set of vertices of $B$.
We thus have the equality $B=\{ h^n(\beta_0)\}_{n\in \mathbb{Z}}$.

\begin{figure}
\includegraphics[width=12cm]{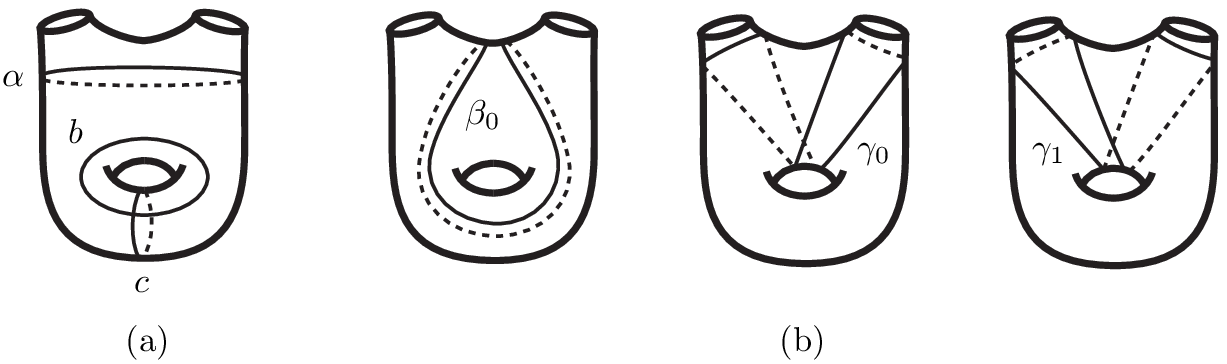}
\caption{}\label{fig-dline}
\end{figure}

Let $\gamma_0$ and $\gamma_1=h(\gamma_0)$ be the curves in $R$ described in Figure \ref{fig-dline} (b).
Note that $\gamma_0$ and $\gamma_1$ belong to $\Gamma$.
The argument in the previous paragraph implies the equality $\Gamma =\{ h^n(\gamma_0)\}_{n\in \mathbb{Z}}$.
We put $\beta_n=h^n(\beta_0)$ and $\gamma_m=h^m(\gamma_0)$ for any $n, m\in \mathbb{Z}$. 

Using the criterion on intersection numbers in Expos\'e 3, Proposition 10 of \cite{flp}, one can check the equality $i(\beta_n, \beta_m)=8\left| n-m\right|$ for any $n, m\in \mathbb{Z}$.
It follows that any two distinct elements of $B$ are not adjacent in $\cal{D}$. 
The same property holds for elements of $\Gamma$ in place of those of $B$.
For each $n\in \mathbb{Z}$, we obtain the equality $i(\beta_0, \gamma_n)=4\left|2n-1\right|$ by using the same criterion in \cite{flp}.
It follows that $\gamma_0$ and $\gamma_1$ are exactly the elements of $\Gamma$ adjacent to $\beta_0$ in $\cal{D}$. 
Applying $h$, we see that the full subgraph of $\cal{D}$ spanned by $B\cup \Gamma$ is the bi-infinite line with $\beta_n$ adjacent to $\gamma_n$ and $\gamma_{n+1}$ for each $n\in \mathbb{Z}$.
\end{proof}

Lemma \ref{lem-d-line} shows that for any edge $\{ b, c\}$ of $\cal{F}$ and any vertex $\beta$ in $\pi^{-1}(b)$, there exists a vertex $\gamma$ in $\pi^{-1}(c)$ with $\{ \beta, \gamma \}$ an edge of $L$.
Connectivity of $\cal{F}$ and the fiber of $\pi$ over any edge of $\cal{F}$ therefore implies connectivity of $L$.

\begin{figure}
\includegraphics[width=12cm]{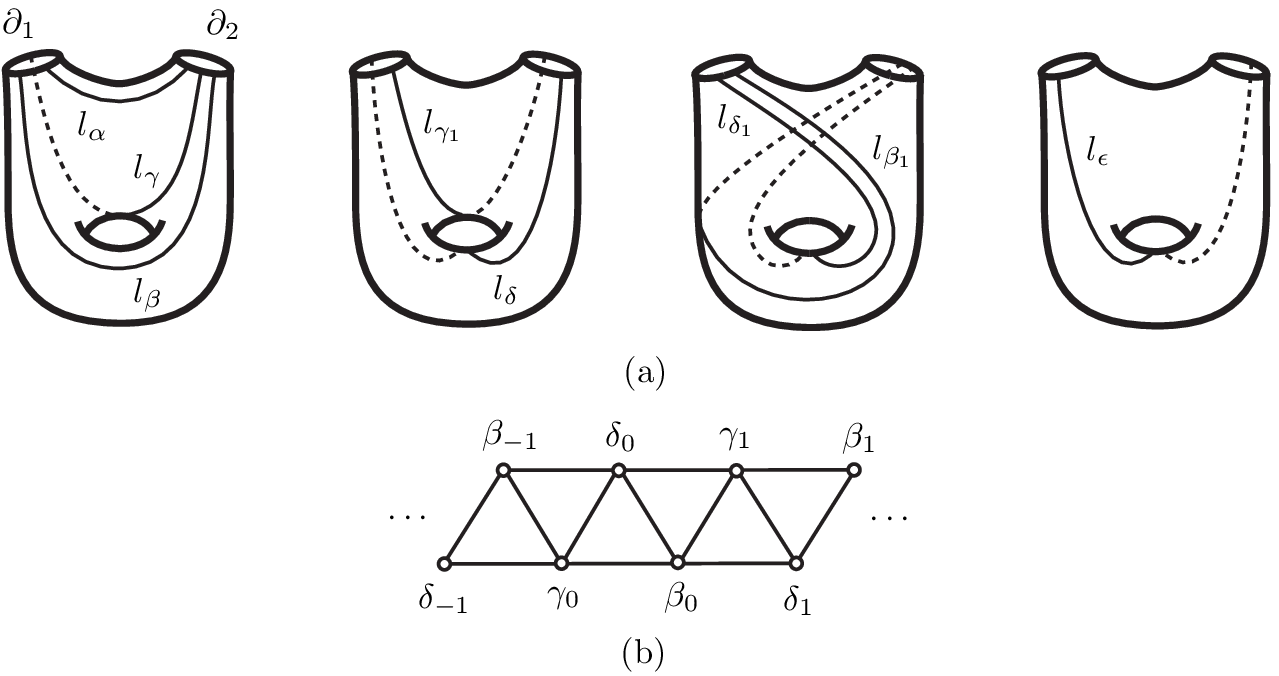}
\caption{}\label{fig-fiber}
\end{figure}

Choose three vertices $\beta$, $\gamma$ and $\delta$ of $\cal{D}$ so that the three arcs $l_{\beta}$, $l_{\gamma}$ and $l_{\delta}$ are described as in Figure \ref{fig-fiber} (a).
Note that each of $l_{\beta}$, $l_{\gamma}$ and $l_{\delta}$ is disjoint from $l_{\alpha}$.
Setting $\beta_n=h^n(\beta)$, $\gamma_n=h^n(\gamma)$ and $\delta_n=h^n(\delta)$ for each $n\in \mathbb{Z}$, we obtain the equalities
\[\pi^{-1}(\pi(\beta))=\{ \beta_n\}_{n\in \mathbb{Z}},\quad \pi^{-1}(\pi(\gamma))=\{ \gamma_n\}_{n\in \mathbb{Z}},\quad \pi^{-1}(\pi(\delta))=\{ \delta_n\}_{n\in \mathbb{Z}}\]
by Lemma \ref{lem-d-line}. 
The fiber of the map $\pi \colon L\rightarrow \cal{F}$ over the triangle of $\cal{F}$ consisting of the three vertices $\pi(\beta)$, $\pi(\gamma)$ and $\pi(\delta)$ is the sequence of triangles described in Figure \ref{fig-fiber} (b).

%%%%%%%%%%%%%%%%%%%%%%%%%%%%%%%%%%%%%%%%%%%%%%

\subsection{Proof of Proposition \ref{prop-d}}

Let $\psi \colon \cal{D}\rightarrow \cal{D}$ be an injective simplicial map.
For each $\alpha \in V(\cal{D})$, we denote by $L_{\alpha}$ the link of $\alpha$ in $\cal{D}$.
To prove surjectivity of $\psi$, it is enough to show that for each $\alpha \in V(\cal{D})$, the map $\psi_{\alpha}\colon L_{\alpha}\rightarrow L_{\psi(\alpha)}$ defined as the restriction of $\psi$ is surjective since $\cal{D}$ is connected as proved in Lemma \ref{lem-d-conn}.

In what follows, we fix $\alpha \in V(\cal{D})$ and put $L=L_{\alpha}$. 
We denote by $V(L)$ the set of vertices of $L$. 
To prove surjectivity of $\psi_{\alpha}$, we show the following two lemmas.

\begin{lem}\label{lem-tri-three}
For each edge $e$ of $L$, there exist exactly three triangles of $L$ containing $e$.
\end{lem}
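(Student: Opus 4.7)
The plan is to exploit the fibration $\pi \colon L \to \cal{F}$ introduced in Section \ref{subsec-fiber}, where $\cal{F} = \cal{F}(H)$ is the Farey graph of the handle $H$ cut off by $\alpha$. The link $L$ is then essentially organized into fibers $\pi^{-1}(b)$, each of which is a bi-infinite chain under the half-twist $h$, and the key observation of Lemma \ref{lem-d-line} that distinct fiber elements are never adjacent in $\cal{D}$ will do most of the work.

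First, given an edge $e = \{\beta, \gamma\}$ of $L$, I would show that $\{b, c\} := \{\pi(\beta), \pi(\gamma)\}$ is an edge of $\cal{F}$. The possibility $\pi(\beta) = \pi(\gamma)$ is ruled out by Lemma \ref{lem-d-line}, since distinct elements of a common fiber are non-adjacent in $\cal{D}$; combined with the simpliciality of $\pi$ established in Section \ref{subsec-fiber}, this gives $i(b, c) = 1$. Next, for any vertex $\delta \in V(L)$ forming a triangle of $L$ with $e$, the same non-adjacency-within-fibers argument rules out $\pi(\delta) \in \{b, c\}$, and simpliciality forces $\pi(\delta)$ to be adjacent in $\cal{F}$ to both $b$ and $c$. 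Because $\cal{F}$ is the Farey graph, exactly two vertices $d, d'$ have this property, namely the two Farey completions of $\{b, c\}$ into a triangle. Hence every common neighbor of $\beta, \gamma$ in $L$ lies in $\pi^{-1}(d) \cup \pi^{-1}(d')$.

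To finish, I would carry out an explicit count inside each of the two fibers using the description of $\pi^{-1}(\{b,c,d\})$ as a bi-infinite sequence of triangles provided at the end of Section \ref{subsec-fiber} and Figure \ref{fig-fiber}(b). By the transitivity of the $\pmod(R)$-action on edges of $\cal{D}$ noted after Lemma \ref{lem-arc-disjoint}, it suffices to verify the count on one concrete edge. Using the $h$-equivariance of the fibers together with the intersection-number criterion from Exposé 3, Proposition 10 of \cite{flp} (just as in the proof of Lemma \ref{lem-d-line}), one can then enumerate precisely those elements of $\pi^{-1}(d)$, and of $\pi^{-1}(d')$, that are simultaneously adjacent to $\beta$ and to $\gamma$, and check that the two contributions sum to $3$.

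The principal obstacle is exactly this last bookkeeping step: one must pin down the relative $h$-indexings of $\pi^{-1}(b), \pi^{-1}(c), \pi^{-1}(d), \pi^{-1}(d')$ so as to match up the adjacencies coming from the three Farey edges $\{b,c\}, \{b,d\}, \{c,d\}$ (and similarly for $d'$) inside a single fiber over a Farey triangle, and then verify that the totals from the triangles $\{b, c, d\}$ and $\{b, c, d'\}$ add up to exactly three rather than two or four. Everything else is structural and follows directly from Lemma \ref{lem-d-line} and the properties of the Farey graph.
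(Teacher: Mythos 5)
Your structural reduction is sound and genuinely different from the paper's: projecting the link $L$ to the Farey graph via $\pi$, ruling out common neighbours of $\beta$ and $\gamma$ inside $\pi^{-1}(b)\cup\pi^{-1}(c)$ by the non-adjacency within fibers from Lemma \ref{lem-d-line}, and confining every candidate third vertex to $\pi^{-1}(d)\cup\pi^{-1}(d')$ for the two Farey completions $d$, $d'$ of $\{b,c\}$ is all correct. But the proof is not finished, and the step you defer is exactly where the number three lives. Lemma \ref{lem-d-line} applied to the Farey edges $\{b,d\}$ and $\{c,d\}$ only tells you that $\beta$ has exactly two neighbours in $\pi^{-1}(d)$ and that $\gamma$ has exactly two neighbours in $\pi^{-1}(d)$; the intersection of these two-element sets can a priori have $0$, $1$ or $2$ elements, and likewise over $d'$, so your own setup bounds the count between $0$ and $4$. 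Pinning it to $3$ --- in fact one Farey triangle contributes one triangle of $L$ and the other contributes two, an asymmetry you could not guess without computing --- requires determining the relative $h$-indexings of the four fibers, which is precisely the bookkeeping you acknowledge but do not carry out. A secondary slip: to reduce to a single concrete edge you invoke transitivity of $\pmod(R)$ on edges of $\cal{D}$, but an element realizing that transitivity need not fix $\alpha$ and hence need not preserve $L$; what is needed, and what the paper derives from Lemma \ref{lem-d-line} together with transitivity of $\mod(H)$ on edges of $\cal{F}$, is that the stabilizer of $\alpha$ acts transitively on edges of $L$.

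The paper sidesteps this bookkeeping entirely. For the lower bound it exhibits the three explicit triangles $\{\beta,\gamma,\delta\}$, $\{\beta,\gamma,\epsilon\}$ and $\{\beta,\gamma,h^{-1}(\epsilon)\}$. For the upper bound it passes to arcs: cutting $R$ along $l_{\alpha}$ and $l_{\beta}$ yields an annulus, cutting further along $l_{\gamma}$ yields a disk, and a vertex completing $\{\beta,\gamma\}$ to a triangle of $L$ corresponds to an essential arc in that disk joining an arc labelled $\partial_1$ to one labelled $\partial_2$; there are visibly exactly three such arcs up to isotopy. To salvage your route you must either execute the intersection-number computation you sketch (fixing the transitivity statement first), or graft the paper's disk argument onto your fibration picture for the upper bound.
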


\begin{lem}\label{lem-seq-tri}
Any two triangles of $L$ are chain-connected in $L$.
\end{lem}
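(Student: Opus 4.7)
My plan is to exploit the fiber structure of the projection $\pi\colon L\rightarrow \cal{F}$ onto the Farey graph $\cal{F}=\cal{F}(H)$ and reduce chain-connectedness of triangles in $L$ to the corresponding property of $\cal{F}$.

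As a first step, I would verify that every triangle of $L$ maps under $\pi$ to a genuine triangle of $\cal{F}$. By Lemma \ref{lem-d-line} the preimage of any vertex of $\cal{F}$ consists of pairwise non-adjacent vertices of $L$, and the preimage of any edge of $\cal{F}$ is a bi-infinite line; neither configuration contains a triangle, so a triangle of $L$ must have three distinct vertices whose $\pi$-images span a triangle of $\cal{F}$.

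Next, for a fixed triangle $\Delta$ of $\cal{F}$, the explicit description of $\pi^{-1}(\Delta)$ in Figure \ref{fig-fiber}(b) as a bi-infinite sequence of triangles in which consecutive members share an edge immediately implies that any two triangles contained in $\pi^{-1}(\Delta)$ are chain-connected in $L$. I would then bridge triangles lying over adjacent triangles of $\cal{F}$: if $\Delta_1$ and $\Delta_2$ are triangles of $\cal{F}$ sharing an edge $\bar e$, pick any edge $e$ of $L$ with $\pi(e)=\bar e$, which exists since $\pi^{-1}(\bar e)$ is a bi-infinite line. By Lemma \ref{lem-tri-three}, $e$ lies in exactly three triangles of $L$, each of which must project to a triangle of $\cal{F}$ containing $\bar e$; since any edge of the Farey graph is contained in precisely two triangles, these three triangles of $L$ distribute between the fibers over $\Delta_1$ and $\Delta_2$. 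Reading off Figure \ref{fig-fiber}(b) that a single edge of $L$ lies in at most two triangles within one fiber forces at least one of the three triangles into each of $\pi^{-1}(\Delta_1)$ and $\pi^{-1}(\Delta_2)$, and these two triangles are chain-connected through $e$.

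To conclude, given any two triangles $T, T'$ of $L$, their projections $\pi(T), \pi(T')$ are triangles of $\cal{F}$ and hence chain-connected by a sequence $\Delta_1,\ldots,\Delta_n$ of triangles in the Farey graph. Iterating the two preceding paragraphs lets me chain-connect $T$ to a triangle of $\pi^{-1}(\Delta_1)$, hop across each shared edge $\Delta_j\cap\Delta_{j+1}$ into $\pi^{-1}(\Delta_{j+1})$, and finally reach $T'$ inside $\pi^{-1}(\Delta_n)$. The step I expect to require the most care is the pigeonhole argument distributing the three triangles of $L$ containing a common edge $e$ between the two fibers $\pi^{-1}(\Delta_1)$ and $\pi^{-1}(\Delta_2)$; this relies essentially on the explicit linear shape of the fibers in Figure \ref{fig-fiber}(b), not merely on intersection-number counts.
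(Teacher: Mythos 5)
Your proof is correct and follows essentially the same route as the paper: project to the Farey graph via $\pi$, use chain-connectedness of Farey triangles, the linear structure of the fibers from Figure \ref{fig-fiber}(b), and the three triangles of Lemma \ref{lem-tri-three} to hop between fibers over adjacent Farey triangles. The only (harmless) difference is that you justify the hop by a pigeonhole count, whereas the paper simply observes that the explicit triangles constructed in the proof of Lemma \ref{lem-tri-three} already project onto both Farey triangles containing the given edge.
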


Using Lemmas \ref{lem-tri-three} and \ref{lem-seq-tri}, we can show surjectivity of $\psi_{\alpha}$ as follows.
Lemma \ref{lem-tri-three} and injectivity of $\psi_{\alpha}$ imply that if $\Delta$ is a triangle of $L$, then $\psi_{\alpha}(L)$ contains any triangle containing an edge of the triangle $\psi_{\alpha}(\Delta)$.
By Lemma \ref{lem-seq-tri}, the image $\psi_{\alpha}(L)$ contains any triangle of $L$.
Surjectivity of $\psi_{\alpha}$ follows because any vertex of $L$ is contained in a triangle of $L$.
In the rest of this subsection, we prove Lemmas \ref{lem-tri-three} and \ref{lem-seq-tri}.
Let $H$ denote the handle cut off by $\alpha$ from $R$, and let $\cal{F}$ denote the graph $\cal{F}(H)$ introduced in Section \ref{subsec-fiber}.

\begin{proof}[Proof of Lemma \ref{lem-tri-three}]
We note that any two edges of $L$ are sent to each other by an element of the stabilizer of $\alpha$ in $\mod(R)$.
This fact follows from Lemma \ref{lem-d-line} and transitivity of the action of $\mod(H)$ on the set of edges of $\cal{F}$.
Let $\{ \beta, \gamma \}$ be an edge of $L$.
We define separating curves $\delta$ and $\epsilon$ in $R$ so that the arcs $l_{\delta}$ and $l_{\epsilon}$ are described in Figure \ref{fig-fiber} (a), respectively.
Let $h\in \mod(R)$ be the half twist about $\alpha$ exchanging $\partial_1$ and $\partial_2$ and being the identity on $H$.
Each of the three sets of vertices, $\{ \beta, \gamma, \delta \}$, $\{ \beta, \gamma, h^{-1}(\epsilon)\}$ and $\{ \beta, \gamma, \epsilon\}$, forms a triangle of $L$.

\begin{figure}
\includegraphics[width=10cm]{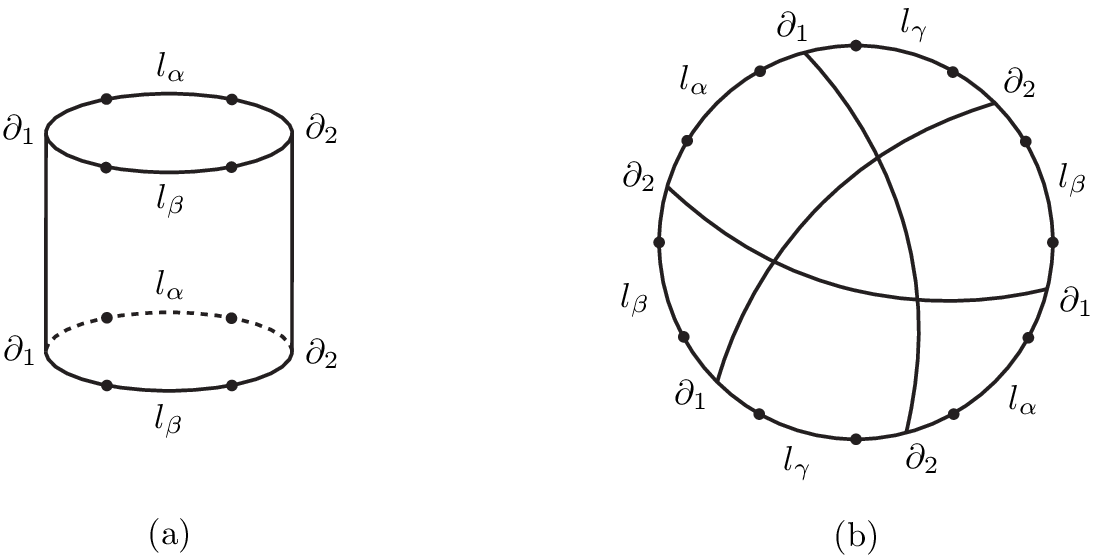}
\caption{}\label{fig-ann}
\end{figure}

We show that there exist at most three triangles of $L$ containing $\{ \beta, \gamma \}$.
If we cut $R$ along the arcs $l_{\alpha}$ and $l_{\beta}$, then we obtain the annulus $A$ whose boundary can be described as in Figure \ref{fig-ann} (a) because $R$ is orientable. 
The arc $l_{\gamma}$ is then given by an arc in $A$ connecting a point of an arc corresponding to $\partial_1$ with a point of an arc corresponding to $\partial_2$. 
This arc in $A$ connects two points in distinct components of $\partial A$ because otherwise $l_{\gamma}$ would be isotopic to either $l_{\alpha}$ or $l_{\beta}$. 
If we cut $A$ along $l_{\gamma}$, then we obtain the disk $D$ in Figure \ref{fig-ann} (b), where the order of the symbols on $\partial D$,
\[\partial_1, l_{\alpha}, \partial_2, l_{\beta}, \partial_1, l_{\gamma}, \partial_2,\ldots,\]
may be reversed. 
This depends on the orientations of $A$ and $D$ and on arcs in $\partial A$ corresponding to $\partial_1$ and $\partial_2$ in which the end points of $l_{\gamma}$ lie. 
There exist exactly three arcs in $D$ connecting a point of an arc corresponding to $\partial_1$ with a point of an arc corresponding to $\partial_2$, up to isotopy, as described in Figure \ref{fig-ann} (b). 
It turns out that there exist at most three triangles of $L$ containing the edge $\{ \beta, \gamma \}$. 
\end{proof}

Recall that we have the simplicial map $\pi \colon L\rightarrow \cal{F}$ defined by $\pi(\beta)=c(\alpha, \beta)$ for each $\beta \in V(L)$, where $c(\alpha, \beta)$ is the curve in Figure \ref{fig-nsarc} (a).

\begin{proof}[Proof of Lemma \ref{lem-seq-tri}]
Let $\Delta$ and $\Delta'$ be triangles of $L$. 
The argument in the first paragraph of the proof of Lemma \ref{lem-tri-three} shows that if we pick an edge of $L$ and the three triangles of $L$ containing it, then the image of them via $\pi$ consists of two triangles of $\cal{F}$ sharing an edge. 
Since any two triangles of $\cal{F}$ are chain-connected in $\cal{F}$, there exists a triangle $\Delta''$ of $\pi^{-1}(\pi(\Delta'))$ such that  $\Delta$ and $\Delta''$ are chain-connected in $L$.
We conclude that $\Delta$ and $\Delta'$ are chain-connected in $L$ because any two triangles in $\pi^{-1}(\pi(\Delta'))$ are chain-connected in $\pi^{-1}(\pi(\Delta'))$ as described in Figure \ref{fig-fiber} (b).
\end{proof}

%%%%%%%%%%%%%%%%%%%%%%%%%%%%%%%%%%%%%%%%%%%%%%

\section{$S_{1, p}$ with $p\geq 3$}\label{sec-g1}

When $S=S_{1, p}$ is a surface with $p\geq 3$, we show that any superinjective map $\phi$ from $\calc_s(S)$ into itself is induced by an element of $\mod^*(S)$. 
The proof relies on induction on $p$.

\subsection{The case $p=3$}\label{subsec-13}

We put $S=S_{1, 3}$.
In this subsection, we show that any superinjective map $\phi \colon \calc_{s}(S)\rightarrow \calc_s(S)$ is surjective. 
Theorem \ref{thm-cs-aut} then implies that $\phi$ is induced by an element of $\mod^*(S)$. 
We first review several facts on $\calc_s(S)$ discussed in \cite{kida}.

We mean by a {\it hexagon} in $\calc_s(S)$ the full subgraph of $\calc_s(S)$ spanned by exactly six vertices $v_1,\ldots, v_6$ with $i(v_j, v_{j+1})=0$, $i(v_j, v_{j+2})\neq 0$ and $i(v_j, v_{j+3})\neq 0$ for each $j$ mod $6$ (see Figure \ref{fig-hex}). 
Any superinjective map $\phi \colon \calc_s(S)\rightarrow \calc_s(S)$ preserves hexagons in $\calc_s(S)$. 
Fundamental properties of hexagons in $\calc_s(S)$ and superinjective maps from $\calc_s(S)$ into itself are stated in the following two propositions.

\begin{prop}[\ci{Theorem 5.2}{kida}]\label{prop-hex}
Let $S=S_{1, 3}$ be a surface. 
Then for any two hexagons $\Pi_1$, $\Pi_2$ in $\calc_s(S)$, there exists an element $f$ of $\pmod(S)$ with $f(\Pi_1)=\Pi_2$.
\end{prop}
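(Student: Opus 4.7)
The plan is to invoke the change-of-coordinates principle for $\pmod(S)$: two ordered tuples of simple closed curves in $S$ with the same topological type lie in the same $\pmod(S)$-orbit. It thus suffices to show that every hexagon in $\calc_s(S_{1,3})$ has a common topological type.

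First I would classify adjacencies in $\calc_s(S_{1,3})$. The separating curves in $S_{1,3}$ are of two types: p-curves (bounding a pair of pants in $S$) and h-curves (bounding a handle). There are exactly three p-curves up to isotopy, namely $p_{12}, p_{13}, p_{23}$, where $p_{ij}$ bounds the pair of pants containing the boundary components $\partial_i$ and $\partial_j$. A direct topological analysis shows that any two disjoint separating curves in $S_{1,3}$ must consist of one p-curve and one h-curve: two p-curves share a boundary component of $S$ in their respective bounding pants and so cannot be disjointly realized, while after cutting along an h-curve the complement is $S_{0,4} \sqcup S_{1,1}$, so any further essential separating curve lives in the $S_{0,4}$ piece and thereby bounds a pants in $S$, making it a p-curve rather than an h-curve.

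Consequently, in any hexagon $v_1, \dots, v_6$ the types alternate, and up to a dihedral relabeling we may take $v_1, v_3, v_5$ to be p-curves and $v_2, v_4, v_6$ to be h-curves. Since $v_1, v_3, v_5$ are pairwise non-adjacent in $\calc_s(S)$, hence pairwise intersecting in $S$, and only three p-curves exist, we must have $\{v_1, v_3, v_5\} = \{p_{12}, p_{13}, p_{23}\}$; after a further dihedral relabeling we may assume $v_1 = p_{12}$, $v_3 = p_{13}$, $v_5 = p_{23}$. To control the h-curves I would realize the three p-curves in a standard model built from pairwise disjoint essential arcs $\alpha_{ij}$ connecting $\partial_i$ and $\partial_j$, with $p_{ij}$ the boundary of a regular neighborhood of $\partial_i \cup \alpha_{ij} \cup \partial_j$. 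Each $v_{2k}$ is then a separating curve disjoint from its two neighboring p-curves and intersecting the remaining four vertices; a direct enumeration in this model shows that the compatible triples $(v_2, v_4, v_6)$ form a single orbit under the stabilizer of $\{p_{12}, p_{13}, p_{23}\}$ in $\pmod(S)$. Applying the change-of-coordinates principle yields the desired $f \in \pmod(S)$ with $f(\Pi_1) = \Pi_2$.

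The main obstacle lies in this final enumeration: after fixing the three p-curves, one must determine all isotopy classes of h-curves satisfying the disjointness and intersection constraints imposed by the hexagon, and then verify these form a unique orbit under the stabilizer. This reduces to an analysis of a small curve-complex-like structure on the subsurface of $S_{1,3}$ determined by the standard model, where the handle of $S$ provides just enough flexibility to carry any compatible configuration of h-curves to a fixed one by a mapping class that preserves each $p_{ij}$.
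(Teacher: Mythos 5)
Note first that the paper does not prove this proposition; it is imported verbatim from Theorem~5.2 of \cite{kida}, so there is no in-paper argument to compare against. Judged on its own terms, your proposal has a concrete error at its pivotal step. The claim that $S_{1,3}$ has ``exactly three p-curves up to isotopy'' is false: for each pair $\{\partial_i,\partial_j\}$ there are infinitely many isotopy classes of p-curves cutting off a pair of pants containing $\partial_i$ and $\partial_j$ (apply to one such curve any Dehn twist about a curve it intersects essentially). What is true is that p-curves fall into three \emph{topological types}, indexed by the pair of boundary components they enclose. Consequently the identification $\{v_1,v_3,v_5\}=\{p_{12},p_{13},p_{23}\}$ with three fixed curves in a standard model is unjustified, and the weaker statement you would actually need --- that the three p-curves of a hexagon realize the three distinct types --- does not follow from their pairwise intersection, since two p-curves enclosing the \emph{same} pair of boundary components can (and generically do) intersect. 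Establishing that the types are distinct requires using the h-curves of the hexagon and is part of the real content of the proposition.

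Your preliminary classification of adjacency (that disjoint non-isotopic separating curves in $S_{1,3}$ consist of one h-curve and one p-curve, so types alternate around a hexagon) is correct and is a genuine first step. But after that, the argument reduces the whole proposition to two unproved assertions: the type-distinctness of $v_1,v_3,v_5$ discussed above, and the ``direct enumeration'' showing that compatible triples of h-curves form a single orbit under the stabilizer of the p-curve triple. That stabilizer is an infinite group and the enumeration is not carried out; together with fixing the p-curves in a standard position, this is exactly where the difficulty of the statement lives. As it stands the proposal restates the problem in a model rather than solving it, and the model itself rests on the false finiteness claim.
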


\begin{prop}[\ci{Lemma 5.6}{kida}]\label{prop-phi-13}
Let $S=S_{1, 3}$ be a surface. 
Then any superinjective map from $\calc_s(S)$ into itself preserves vertices corresponding to h-curves and p-curves in $S$, respectively.
\end{prop}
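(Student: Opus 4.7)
The plan has three parts. First, I would observe that $\calc_s(S)$ is bipartite with parts $V_h$ (h-curves) and $V_p$ (p-curves): any two distinct h-curves must intersect because $S=S_{1,3}$ has a unique handle (otherwise two disjoint h-curves would cobound an annulus and be isotopic); any two distinct p-curves must intersect by a short case analysis on which pair $\{b_i,b_j\}$ of boundary components each surrounds (same type forces nesting or side-by-side, both impossible; different types force one curve's pair-of-pants side to contain a boundary absent from the other side); and an h-curve and a p-curve can certainly be disjoint (e.g.\ with the p-curve on the $S_{0,4}$-side of the h-curve). Hence every edge of $\calc_s(S)$ joins $V_h$ to $V_p$.

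Second, since $\calc_s(S)$ is connected by the connectivity lemma in Section \ref{sec-pre}, a standard propagation argument along edges shows that the simplicial injection $\phi$ either preserves the bipartition ($\phi(V_h)\subseteq V_h$ and $\phi(V_p)\subseteq V_p$) or swaps it globally. The preservation case is exactly the statement of the proposition, so the task reduces to ruling out the global swap.

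Third, and this is the main content, I would exhibit a combinatorial property distinguishing h-curves from p-curves. The topological asymmetry to exploit is that on the $S_{0,4}$-side of an h-curve $\alpha$ there exist three pairwise-intersecting separating curves, one of each combinatorial type $p_{12},p_{13},p_{23}$, whose union fills $S_{0,4}$ into regions containing no essential separating curve of $S$; whereas separating curves of $S_{1,2}$ are all of a single topological type, so three of them cannot pin down a central p-curve in the same rigid way. The candidate invariant on a vertex $\alpha\in V(\calc_s(S))$ is: \emph{there exist three pairwise non-adjacent vertices in the link of $\alpha$ whose only common neighbor in $\calc_s(S)$ is $\alpha$ itself}. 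This is stated purely in the language of the graph, hence preserved under $\phi$, and I would show it holds for every h-curve and fails for every p-curve, contradicting the swap hypothesis.

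The hardest step is verifying the non-existence half of the invariant: showing that for every p-curve $\alpha$ and every triple of pairwise non-adjacent vertices in its link, some common neighbor other than $\alpha$ must exist. This amounts to analyzing configurations of three pairwise-intersecting separating curves in $S_{1,2}$, for which the graph $\cal{D}$ of Section \ref{sec-d} together with Proposition \ref{prop-d} and the hexagon transitivity of Proposition \ref{prop-hex} should furnish the structural input to reduce to a finite number of normalized cases.
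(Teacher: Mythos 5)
Your first two steps are sound: every separating curve in $S_{1,3}$ is an h-curve or a p-curve, any two distinct curves of the same type intersect, so $\calc_s(S)$ is bipartite with both links discrete, and connectivity forces a superinjective map to either preserve or globally swap the two classes. (The paper records exactly this dichotomy in the sentence following the proposition; note also that the proposition itself is imported from Lemma 5.6 of \cite{kida} and not reproved here, so there is no in-paper argument to match.) Incidentally, since links are discrete, the clause ``pairwise non-adjacent'' in your invariant is vacuous.

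The third step, which is the entire content, has two genuine problems. First, your invariant contains the clause ``whose only common neighbor in $\calc_s(S)$ is $\alpha$ itself,'' a non-existence statement quantified over all of $V_s(S)$. A superinjective map transports adjacency and non-adjacency of finitely many specified vertices, but since $\phi$ is not assumed surjective it transports such a uniqueness property in neither direction: a common neighbor of $\phi(\beta_1),\phi(\beta_2),\phi(\beta_3)$ need not lie in the image of $\phi$, so it neither pulls back to contradict uniqueness at $\alpha$ nor is excluded by it. This is precisely the surjectivity pitfall the paper isolates in Remark \ref{rem-gap}. Second, and fatally, the claimed asymmetry is false: the invariant holds for p-curves as well. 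If $\alpha$ is a p-curve with $S_{1,2}$-side $R$, take separating curves $\gamma_1,\gamma_2,\gamma_3$ of $R$ whose union fills $R$ in the strong sense that no essential arc or closed curve of $R$ misses it (e.g.\ a curve together with large pseudo-Anosov translates of it); realizing everything geodesically, any separating curve of $S$ disjoint from all three meets $R$ only in inessential pieces and is therefore isotopic to $\alpha$, so $\alpha$ is the unique common neighbor. The symmetric construction in the $S_{0,4}$-side works for h-curves, so the invariant distinguishes nothing. The standard repair is to run the asymmetry through hexagons: by Proposition \ref{prop-hex} all hexagons are $\pmod(S)$-equivalent, so one can verify directly, for every hexagon at once, that the alternating triple of one type admits an extra vertex in a prescribed adjacency pattern while the other triple does not; then only the existential half of that statement needs to be pushed through $\phi$ (which sends hexagons to hexagons), and the universal half is checked on the image hexagon topologically rather than transported. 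That is the shape of the argument in \cite{kida}, and your plan as written does not reach it.
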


We note that each separating curve in $S$ is either an h-curve or a p-curve in $S$ and that for each h-curve (resp.\ p-curve) $\alpha$ in $S$, any separating curve in $S$ disjoint from $\alpha$ and non-isotopic to $\alpha$ is a p-curve (resp.\ an h-curve) in $S$.

\begin{thm}\label{thm-13}
Let $S=S_{1, 3}$ be a surface. 
Then any superinjective map from $\calc_s(S)$ into itself is surjective.
\end{thm}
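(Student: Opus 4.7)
The plan is to combine the three key inputs developed so far---Propositions \ref{prop-phi-13}, \ref{prop-hex}, and \ref{prop-d}---to prove surjectivity of $\phi$. By Proposition \ref{prop-phi-13}, $\phi$ preserves h-curves and p-curves as separate subsets of $V_s(S)$. Fix a convenient p-curve $\alpha$, and let $R$ denote the $S_{1,2}$ piece of $S$ cut off by $\alpha$. Since the pair of pants cut off by $\alpha$ contains no essential simple closed curves, the link $L_{\alpha}$ of $\alpha$ in $\calc_s(S)$ has its vertex set canonically identified with $V_s(R)=V(\cal{D}(R))$; the same holds for the link of the p-curve $\phi(\alpha)$. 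Thus $\phi$ restricts to an injection between two copies of $V(\cal{D})$, and after composing with an element of $\mod^*(S)$ sending $\phi(\alpha)$ back to $\alpha$, I may regard it as a self-injection $\phi_{\alpha}\colon V(\cal{D})\to V(\cal{D})$.

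The heart of the argument is to show that $\phi_{\alpha}$ is simplicial on $\cal{D}$, i.e.\ preserves the edge relation $i(\beta_1,\beta_2)=4$. Note that the disjointness graph on $V_s(R)$ is discrete---no two distinct separating curves of $S_{1,2}$ are disjoint---so the $\cal{D}$-edge structure cannot be read off from the link of $\alpha$ alone and must be recovered from configurations in the ambient $\calc_s(S)$. The natural route is a hexagon-based characterization: I expect to verify that for $\beta_1,\beta_2\in L_{\alpha}$ one has $i(\beta_1,\beta_2)=4$ if and only if the triple $\{\beta_1,\alpha,\beta_2\}$ extends to a hexagon of $\calc_s(S)$ in which $\beta_1,\alpha,\beta_2$ occupy three consecutive vertices. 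The ``only if'' direction is by explicit construction, using Lemma \ref{lem-arc-disjoint} and the common disjoint non-separating core $c(\beta_1,\beta_2)$ to supply the three further hexagon vertices as suitable p- and h-curves near the boundary components of $S$. The ``if'' direction relies on Proposition \ref{prop-hex}, which collapses the analysis to a single hexagon model in which the $i=4$ relation is directly visible on the designated three vertices. Since $\phi$ preserves both disjointness and non-disjointness, it sends hexagons to hexagons, and hence $\phi_{\alpha}$ preserves $\cal{D}$-edges.

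With simpliciality in hand, Proposition \ref{prop-d} yields that $\phi_{\alpha}$ is surjective, so $\phi$ maps $L_{\alpha}$ bijectively onto $L_{\phi(\alpha)}$ for every p-curve $\alpha$. For an h-curve $\alpha'$, the link $L_{\alpha'}$ consists of exactly the three essential curves of the complementary $S_{0,4}$ piece (each of which is a p-curve of $S$), so $\phi$ restricts there to an injection between three-element sets, which is automatically a bijection. Combining these two surjectivity statements with connectivity of $\calc_s(S)$ then gives surjectivity of $\phi$ on all of $V_s(S)$.

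The hardest part will be the hexagon characterization in the second paragraph: explicitly constructing the hexagon extension from an $i(\beta_1,\beta_2)=4$ pair, and verifying that no pair with $i\neq 4$ admits such an extension. Proposition \ref{prop-hex} shoulders most of the ``if'' direction by reducing to a single model hexagon, while the ``only if'' side requires a careful topological construction leveraging the annulus-plus-arcs picture of Section \ref{sec-d}.
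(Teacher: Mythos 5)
Your overall strategy (reduce to surjectivity on links via connectivity of $\calc_s(S)$, and handle the link of a p-curve by transporting the $i=4$ relation through hexagons so that Proposition \ref{prop-d} applies) is exactly the paper's argument for the p-curve case, and that part of your proposal is sound: the paper likewise uses Proposition \ref{prop-hex} together with preservation of hexagons to see that $\phi_\alpha$ induces an injective simplicial self-map of $\cal{D}(R)$.

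However, your treatment of the h-curve case contains a genuine error. The link of an h-curve $\alpha'$ in $\calc_s(S_{1,3})$ is identified with the set of \emph{all} isotopy classes of essential simple closed curves in the complementary $S_{0,4}$ piece (each such curve is separating in $S$), and this set is countably infinite --- it is the vertex set of the Farey graph $\cal{F}(S_{0,4})$, not a three-element set. (You appear to be counting the three topological types of curves in $S_{0,4}$, i.e.\ the three ways of pairing up the boundary components, rather than isotopy classes; each type contains infinitely many isotopy classes related by Dehn twists.) Consequently ``an injection of a three-element set is a bijection'' does not apply, and the h-curve case cannot be dismissed: an injection of an infinite set into itself need not be onto. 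Moreover you cannot sidestep this case, since every neighbour of an h-curve is a p-curve and vice versa, so propagating surjectivity along a path in the connected complex $\calc_s(S)$ forces you to pass through links of h-curves. The paper closes this case by the same hexagon device you use for p-curves: Proposition \ref{prop-hex} shows that $\phi_{\alpha'}$ preserves the relation $i(\alpha_1,\alpha_2)=2$ on the link, hence induces an injective simplicial self-map of the Farey graph $\cal{F}(Q)$, which is surjective by the triangle/chain-connectedness properties of $\cal{F}$ recorded in Section \ref{subsec-fiber}. Your argument needs this (or an equivalent) step to be complete.
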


\begin{proof}
Put $S=S_{1, 3}$ and let $\phi \colon \calc_s(S)\rightarrow \calc_s(S)$ be a superinjective map. 
Since $\calc_s(S)$ is connected, it is enough to show that for each $\alpha \in V_s(S)$, the map $\phi_{\alpha}\colon \lk_s(\alpha)\rightarrow \lk_s(\phi(\alpha))$ defined as the restriction of $\phi$ is surjective, where for each $\beta \in V_s(S)$, we denote by $\lk_s(\beta)$ the link of $\beta$ in $\calc_s(S)$. 

\begin{figure}
\includegraphics[width=12cm]{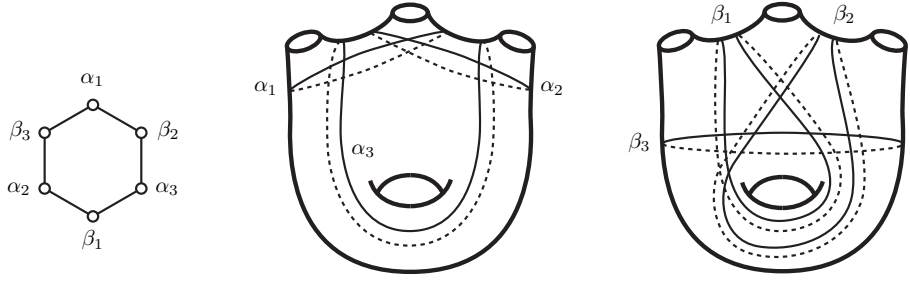}
\caption{A hexagon in $\calc_s(S_{1, 3})$}\label{fig-hex}
\end{figure}

We first assume that $\alpha$ is an h-curve in $S$.
Let $Q_1$ and $Q_2$ denote the components of $S_{\alpha}$ and $S_{\phi(\alpha)}$, respectively, that are homeomorphic to $S_{0, 4}$. 
For any two vertices $\alpha_1$, $\alpha_2$ of $\lk_s(\alpha)$ with $i(\alpha_1, \alpha_2)=2$, we obtain $i(\phi(\alpha_1), \phi(\alpha_2))=2$ by using Proposition \ref{prop-hex} and the fact that $\phi$ preserves hexagons in $\calc_s(S)$.
It follows that $\phi_{\alpha}$ induces an injective simplicial map from the graph $\cal{F}(Q_1)$ into the graph $\cal{F}(Q_2)$ and is thus surjective. 

We next assume that $\alpha$ is a p-curve in $S$.
Let $R_1$ and $R_2$ denote the components of $S_{\alpha}$ and $S_{\phi(\alpha)}$, respectively, that are homeomorphic to $S_{1, 2}$. 
Similarly, Proposition \ref{prop-hex} implies that $\phi_{\alpha}$ induces an injective simplicial map from the graph $\cal{D}(R_1)$ into $\cal{D}(R_2)$ and is thus surjective by Proposition \ref{prop-d}.
\end{proof}

Combining the last theorem with Theorem \ref{thm-cs-aut}, we obtain the following:

\begin{cor}\label{cor-13}
Let $S=S_{1, 3}$ be a surface. 
Then any superinjective map from $\calc_s(S)$ into itself is induced by an element of $\mod^*(S)$.
\end{cor}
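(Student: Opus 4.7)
The plan is to derive Corollary \ref{cor-13} as an immediate consequence of Theorem \ref{thm-13} combined with Theorem \ref{thm-cs-aut}. The only work required is to verify that a bijective superinjective map is in fact a simplicial automorphism of $\calc_s(S)$, so that Theorem \ref{thm-cs-aut} applies.

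First I would let $\phi\colon \calc_s(S)\rightarrow \calc_s(S)$ be a superinjective map with $S=S_{1,3}$. As noted in the preliminaries, any superinjective map is injective, and Theorem \ref{thm-13} tells us that $\phi$ is surjective on vertices. Hence $\phi$ restricts to a bijection $V_s(S)\rightarrow V_s(S)$.

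Next I would check that $\phi^{-1}$ is also simplicial, so that $\phi$ is a genuine simplicial automorphism of $\calc_s(S)$. If $\alpha,\beta\in V_s(S)$ satisfy $i(\alpha,\beta)=0$, set $\alpha'=\phi^{-1}(\alpha)$ and $\beta'=\phi^{-1}(\beta)$. If we had $i(\alpha',\beta')\neq 0$, then superinjectivity would give $i(\phi(\alpha'),\phi(\beta'))=i(\alpha,\beta)\neq 0$, a contradiction. Therefore $i(\alpha',\beta')=0$, so $\phi^{-1}$ sends edges to edges; since $\calc_s(S)$ is a flag complex (a simplex is any pairwise-disjoint collection of separating curves), $\phi^{-1}$ is simplicial and $\phi$ is an automorphism of $\calc_s(S)$.

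Finally I would invoke Theorem \ref{thm-cs-aut} for the surface $S=S_{1,3}$, which falls under the case $g=1$, $p\geq 3$: the automorphism $\phi$ of $\calc_s(S)$ is induced by an element of $\mod^*(S)$, proving the corollary. There is no genuine obstacle here, since the two inputs have already been established; the only point that requires mention is the verification that $\phi^{-1}$ is simplicial, which follows painlessly from superinjectivity itself.
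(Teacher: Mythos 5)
Your proposal is correct and follows exactly the paper's route: Theorem \ref{thm-13} gives surjectivity, injectivity comes from superinjectivity, and Theorem \ref{thm-cs-aut} is then applied to the resulting automorphism. The extra verification that $\phi^{-1}$ is simplicial (via superinjectivity and the flag property of $\calc_s(S)$) is a reasonable detail that the paper leaves implicit.
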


%%%%%%%%%%%%%%%%%%%%%%%%%%%%%%%%%%%%%%%%%%%%%

\subsection{The case $p\geq 4$}\label{subsec-p4}

Let $S=S_{1, p}$ be a surface with $p\geq 4$ and fix a superinjective map $\phi \colon \calc_s(S)\rightarrow \calc_s(S)$. 
By induction on $p$, we show that $\phi$ is induced by an element of $\mod^*(S)$. 
For each integer $q$ with $2\leq q\leq p$, we mean by a $q$-{\it HBC (hole-bounding curve)} in $S$ a separating curve $\alpha$ in $S$ such that the component of $S_{\alpha}$ of genus zero contains exactly $q$ components of $\partial S$.
Note that each separating curve in $S$ is a $q$-HBC for some integer $q$ with $2\leq q\leq p$.
By Lemma 3.19 of \cite{kida}, for each integer $q$ with $2\leq q\leq p$, the map $\phi$ preserves $q$-HBCs in $S$.

\begin{lem}\label{lem-hbc}
Let $\alpha$ be a $q$-HBC in $S$ with $2\leq q\leq p$. 
Then the map
\[\phi_{\alpha}\colon \lk_{s}(\alpha)\rightarrow \lk_{s}(\phi(\alpha))\]
defined as the restriction of $\phi$ is surjective, where for each $\beta \in V_{s}(S)$, we denote by $\lk_{s}(\beta)$ the link of $\beta$ in $\calc_{s}(S)$.
\end{lem}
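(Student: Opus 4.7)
The plan is to exploit the join decomposition of $\lk_s(\alpha)$ in $\calc_s(S)$. Cutting $S$ along $\alpha$ yields $S_\alpha = P \sqcup T$ with $P = S_{0, q+1}$ (the genus-zero piece) and $T = S_{1, p-q+1}$ (the genus-one piece). Since every essential curve in $P$ is separating in $S$, the link admits a natural identification as a simplicial join $\lk_s(\alpha) = \calc_s(T) \ast \calc(P)$. By Lemma 3.19 of \cite{kida}, $\phi(\alpha)$ is again a $q$-HBC, and $\lk_s(\phi(\alpha)) = \calc_s(T') \ast \calc(P')$ decomposes analogously with $T' \cong T$ and $P' \cong P$.

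The first key step is to show $\phi_\alpha(\calc_s(T)) \subseteq \calc_s(T')$, using that $\phi$ preserves $k$-HBC types by Lemma 3.19 of \cite{kida}. The types of $k$-HBC occurring among vertices of $\calc_s(T)$ are $\{2, \dots, p-q\} \cup \{q+1, \dots, p\}$, while those in $\calc(P)$ are $\{2, \dots, q-1\}$. A vertex of $\calc_s(T)$ whose type lies in $\{q+1, \dots, p\}$ (in particular any h-curve of $S$ contained in $T$) is automatically sent into $\calc_s(T')$, since such types are absent from $\calc(P')$. For a vertex $\beta \in \calc_s(T)$ of type in $\{2, \dots, p-q\}$, I would exhibit an h-curve $\beta' \subseteq T$ with $i(\beta, \beta') \neq 0$; by superinjectivity $\phi(\beta)$ and $\phi(\beta')$ remain non-disjoint, forcing them to lie on a common side of the target join, and since $\phi(\beta') \in \calc_s(T')$ by the type argument, so does $\phi(\beta)$.

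The second step is to establish surjectivity on each factor. On $\calc(P) = \calc(S_{0, q+1})$, the restriction of $\phi_\alpha$ is an injective simplicial self-map: Theorem \ref{thm-sha} applies when $q \geq 4$, the well-known surjectivity of injective self-maps of the Farey graph handles $q = 3$, and the case $q = 2$ is vacuous. On $\calc_s(T)$, the restriction is superinjective: when $q = p$ the complex is empty; when $2 \leq q \leq p - 2$ the surface $T = S_{1, p-q+1}$ satisfies $3 \leq p-q+1 \leq p-1$, so the induction hypothesis on $p$ yields surjectivity; when $q = p - 1$, $T = S_{1, 2}$, and one reduces to Proposition \ref{prop-d} via the auxiliary graph $\cal{D}$ of Section \ref{sec-d}. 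Once both factor restrictions are surjective, injectivity of $\phi_\alpha$ forces the containment $\phi_\alpha(\calc(P)) \subseteq \calc(P')$, and hence $\phi_\alpha$ itself is surjective onto $\lk_s(\phi(\alpha))$.

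The main obstacle is the subcase $q = p - 1$ of the second step. There, one must promote $\phi_\alpha|_{\calc_s(T)}$ to an injective simplicial map on the graph $\cal{D}(S_{1, 2})$, that is, show that the intersection-number-$4$ relation between separating curves of $T$ is preserved. Since $\calc_s(S_{1, 2})$ is edgeless, this relation is invisible from $\calc_s(T)$ alone and must be detected from the ambient simplicial structure of $\calc_s(S)$. I anticipate this will be achieved by identifying explicit combinatorial configurations in $\calc_s(S)$ --- likely involving vertices from $\calc(P)$ paired with pairs in $\calc_s(T)$ --- that characterize pairs $\{\beta_1, \beta_2\} \subseteq \calc_s(T)$ with $i(\beta_1, \beta_2) = 4$, in the spirit of the hexagonal configurations exploited for $S_{1, 3}$ in the proof of Theorem \ref{thm-13}. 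Once this preservation is in place, Proposition \ref{prop-d} yields the required surjectivity.
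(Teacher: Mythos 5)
Your overall strategy (split $\lk_s(\alpha)$ as the join $\calc_s(T)\ast\calc(P)$, show $\phi_\alpha$ respects the two factors, then prove surjectivity factor by factor) matches the paper's, and your Step 1 is sound. The genuine gap is in Step 2, exactly where you flag the ``main obstacle'': in the degenerate cases the factor complexes are zero-dimensional, so injectivity (or superinjectivity) of the restricted map carries no information. For $q=p-1$ the factor $\calc_s(T)=\calc_s(S_{1,2})$ is an edgeless vertex set, and your plan to detect the intersection-number-$4$ relation of $\cal{D}(S_{1,2})$ from configurations inside $\lk_s(\alpha)$ is only announced, not carried out; nothing in the paper up to this point supplies such a characterization, and without it Proposition \ref{prop-d} cannot be invoked. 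The same problem occurs, unacknowledged, at $q=3$: $\calc(P)=\calc(S_{0,4})$ is also a discrete set, so ``the well-known surjectivity of injective self-maps of the Farey graph'' is inapplicable until you have shown that $\phi_\alpha$ preserves the intersection-number-$2$ relation defining $\cal{F}(S_{0,4})$, which you do not address.

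The paper's proof avoids both degenerate situations by an enlargement device rather than by analyzing the factors in place. For the genus-zero factor it chooses an h-curve $\beta$ in $R$ disjoint from $\alpha$; then $Q$ sits inside the genus-zero component of $S_\beta$, which is $S_{0,p+1}$ with $p+1-4>0$, so Theorem \ref{thm-sha} applies to that larger complex of curves and the resulting automorphism (induced by a homeomorphism) carries $V(Q)$ onto $V(Q_1)$. Dually, for the genus-one factor it chooses a separating curve $\gamma$ inside $Q$ (possible since $q\geq 3$); the genus-one component of $S_\gamma$ is $S_{1,p-q'+1}$ with $q'\leq q-1\leq p-2$, hence with at least three boundary components, so the induction hypothesis applies there and carries $V_s(R)$ onto $V_s(R_1)$. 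This is the missing idea in your proposal: rather than fighting to recover the graphs $\cal{D}$ and $\cal{F}$ on the small factors, embed each factor into a strictly larger subsurface cut off by an auxiliary curve, where the already-available rigidity results apply directly. Your argument for $2\leq q\leq p-2$ on the genus-one side and for $q\geq 4$ on the genus-zero side is fine, but as written the proof is incomplete precisely in the remaining cases.
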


\begin{proof}
If $q=2$, then $\lk_s(\alpha)$ is identified with $\calc_s(S_{1, p-1})$, and $\phi_{\alpha}$ is surjective by the hypothesis of the induction.
If $q=p$, then $\lk_s(\alpha)$ is identified with $\calc(S_{0, p+1})$, and $\phi_{\alpha}$ is surjective by Theorem \ref{thm-sha}.

We assume $3\leq q\leq p-1$.
Let $Q$ and $R$ denote the two components of $S_{\alpha}$ with $R$ of genus one, and let $Q_1$ and $R_1$ denote the two components of $S_{\phi(\alpha)}$ with $R_1$ of genus one.
As proved in Lemma 3.19 of \cite{kida}, we have the inclusions
\[\phi(V(Q))\subset V(Q_1)\quad \textrm{and}\quad \phi(V_s(R))\subset V_s(R_1).\]
Choosing an h-curve $\beta$ in $S$ disjoint from $\alpha$ and applying Theorem \ref{thm-sha} to the component of $S_{\beta}$ of genus zero, we obtain the equality $\phi(V(Q))=V(Q_1)$.
Choosing a separating curve $\gamma$ in $Q$ and applying the hypothesis of the induction to the component of $S_{\gamma}$ of genus one, we obtain the equality $\phi(V_s(R))=V_s(R_1)$.
\end{proof}

Lemma \ref{lem-hbc} implies that $\phi$ is surjective because $\calc_s(S)$ is connected.
Combining Theorem \ref{thm-cs-aut}, we obtain the following:

\begin{thm}\label{thm-1p4}
Let $S=S_{1, p}$ be a surface with $p\geq 4$.
Then any superinjective map from $\calc_{s}(S)$ into itself is induced by an element of $\mod^*(S)$.
\end{thm}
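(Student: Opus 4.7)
The plan is to prove the theorem by induction on $p$, with the base case $p=3$ already handled by Corollary \ref{cor-13}. For the inductive step with $p\geq 4$, my goal is to show that $\phi$ is surjective, whence $\phi$ is an automorphism of $\calc_s(S)$ and Theorem \ref{thm-cs-aut} produces the required element of $\mod^*(S)$.

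The first step is a classification remark: every separating curve $\alpha$ in $S=S_{1,p}$ is a $q$-HBC for a unique $q$ with $2\leq q\leq p$. Indeed, $\alpha$ cuts $S$ into two pieces whose genera sum to $1$, so exactly one of them has genus zero, and this piece must contain at least two components of $\partial S$ (otherwise $\alpha$ would bound a disk or be isotopic to a boundary component). Consequently, Lemma \ref{lem-hbc} applies to every vertex of $\calc_s(S)$, giving the equality $\phi(\lk_s(\alpha))=\lk_s(\phi(\alpha))$ for all $\alpha\in V_s(S)$.

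Next, I would bootstrap global surjectivity from this link-surjectivity via connectedness. Set $W=\phi(V_s(S))$. If $\beta\in W$ and $\gamma\in V_s(S)$ is adjacent to $\beta$ in $\calc_s(S)$, then writing $\beta=\phi(\alpha_0)$ and applying Lemma \ref{lem-hbc} to $\alpha_0$ yields some $\alpha\in\lk_s(\alpha_0)$ with $\phi(\alpha)=\gamma$; hence $\gamma\in W$. Thus $W$ is closed under passing to adjacent vertices, and since $\calc_s(S)$ is connected (by the lemma following Theorem \ref{thm-cs-aut}'s setup in Section \ref{subsec-comp}), we conclude $W=V_s(S)$. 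Finally, a superinjective simplicial self-map of $\calc_s(S)$ that is surjective on vertices is automatically an automorphism: injectivity follows from superinjectivity, and both $\phi$ and its vertex-inverse preserve disjointness (simpliciality gives one direction, superinjectivity the other). Theorem \ref{thm-cs-aut} then finishes the proof.

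The genuine inductive content of this argument is hidden entirely inside Lemma \ref{lem-hbc}: it is the case $q=2$ of that lemma, where $\lk_s(\alpha)\cong\calc_s(S_{1,p-1})$, that invokes the inductive hypothesis on a surface with one fewer boundary component. Once Lemma \ref{lem-hbc} is in hand, the proof of Theorem \ref{thm-1p4} itself is a short propagation-by-connectedness argument and presents no real obstacle; the main work has been done upstream in assembling $\phi_\alpha$ from the three regimes $q=2$, $3\leq q\leq p-1$, and $q=p$.
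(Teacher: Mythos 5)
Your proposal is correct and follows essentially the same route as the paper: Section 4.2 derives surjectivity of $\phi$ from Lemma \ref{lem-hbc} (whose case $q=2$ carries the induction on $p$) together with connectedness of $\calc_s(S)$, and then invokes Theorem \ref{thm-cs-aut}. Your write-up merely spells out the propagation-by-connectedness step that the paper leaves implicit.
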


%%%%%%%%%%%%%%%%%%%%%%%%%%%%%%%%%%%%%%%%%%%%%%

\section{Construction of $\Phi$ and its simpliciality}\label{sec-const}

Given a closed surface $S$ with its genus at least three and a superinjective map $\phi \colon \calc_s(S)\rightarrow \calc_s(S)$, Brendle-Margalit \cite{bm} construct a map $\Phi \colon V(S)\rightarrow V(S)$ which coincides with $\phi$ on $V_s(S)$. 
They prove that $\Phi$ defines an automorphism of $\calc(S)$ if $\phi$ is an automorphism of $\calc_s(S)$. 
Their construction can also be applied to the case $S=S_{g, p}$ with $g\geq 2$ and $\left|\chi(S)\right|=2g+p-2\geq 4$ as discussed in \cite{kida}. 
In this section, we review the construction of $\Phi$ and prove simpliciality of $\Phi$ without assuming that $\phi$ is an automorphism.
Sharing pairs defined below play an important role in the construction of $\Phi$.

If $S=S_{g, p}$ is a surface with $g\geq 2$ and $\left|\chi(S)\right|\geq 3$, then for each h-curve (or its isotopy class) $\alpha$ in $S$, we denote by $H_{\alpha}$ the handle cut off by $\alpha$ from $S$, which is naturally identified with a subsurface of $S$.

\begin{defn}\label{defn-share}
Let $S=S_{g, p}$ be a surface with $g\geq 2$ and $\left|\chi(S)\right|\geq 3$.
Let $\alpha, \beta \in V_s(S)$ be h-curves in $S$ and $c\in V(S)$ a non-separating curve in $S$.
We say that $\alpha$ and $\beta$ {\it share} $c$ if there exist representatives $A$, $B$ and $C$ of $\alpha$, $\beta$ and $c$, respectively, such that we have $|A\cap B|=i(\alpha, \beta)$, $H_A\cap H_B$ is an annulus with its core curve $C$, and $S\setminus (H_A\cup H_B)$ is connected.
In this case, we also say that $\{ \alpha, \beta \}$ is a {\it sharing pair} for $c$.
\end{defn}

It is shown that any two sharing pairs in $S$ are sent to each other by an element of $\pmod(S)$. 
Note that when $S$ is a surface of genus less than two, there exists no pair $\{ \alpha, \beta \}$ of h-curves in $S$ satisfying the condition in Definition \ref{defn-share}.

Given a sharing pair $\{ \alpha, \beta \}$ for a non-separating curve $c$ in $S$, one can associate a BP $b(\alpha, \beta)$ in $S$ as follows.
Choosing representatives $A$, $B$ of $\alpha$, $\beta$, respectively, with $|A\cap B|=i(\alpha, \beta)=4$ and choosing a regular neighborhood $N$ of $A\cup B$ in $S$, we define $b(\alpha, \beta)\in \Sigma(S)$ as the set of isotopy classes of boundary components of $N$ which are essential in $S$ and whose isotopy classes are not equal to $c$.
The set $b(\alpha, \beta)$ is in fact a BP in $S$ which cuts off a surface homeomorphic to $S_{1, 2}$ and containing $\alpha$, $\beta$ and $c$.

The following is a summary of properties of superinjective maps from $\calc_s(S)$ into itself which will be needed to construct $\Phi$.

\begin{lem}[\ci{Lemmas 3.18 and 3.19}{kida}]\label{lem-pre-top}
Let $S=S_{g, p}$ be a surface with $g\geq 2$ and $\left|\chi(S)\right|\geq 4$, and let $\phi \colon \calc_s(S)\rightarrow \calc_s(S)$ be a superinjective map. 
Then $\phi$ preserves the topological type of each vertex of $\calc_s(S)$. 
Namely, for each separating curve $\alpha$ in $S$, if $Q_1$ and $Q_2$ denote the components of $S_{\alpha}$ and if $R_1$ and $R_2$ denote the components of $S_{\phi(\alpha)}$, then for each $j=1, 2$,
\begin{itemize}
\item the inclusion $\phi(V_s(Q_j))\subset V_s(R_j)$ holds; and
\item $Q_j$ and $R_j$ are homeomorphic
\end{itemize} 
after exchanging the indices if necessary.
\end{lem}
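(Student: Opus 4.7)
The plan is to exploit the natural join decomposition of the link of a separating curve. For a separating curve $\alpha$ in $S$ with $S_\alpha = Q_1 \sqcup Q_2$, identifying each $V_s(Q_j)$ with the corresponding subset of $V(\calc_s(S))$ yields a simplicial isomorphism
\[
\lk_s(\alpha) \;\cong\; \calc_s(Q_1) \ast \calc_s(Q_2),
\]
since any separating curve of $S$ disjoint from and distinct from $\alpha$ lies in exactly one $Q_j$ and is separating there, while two such curves intersect iff they lie in the same piece and intersect there. By superinjectivity and injectivity, $\phi$ restricts to an injective simplicial map $\lk_s(\alpha) \hookrightarrow \lk_s(\phi(\alpha)) \cong \calc_s(R_1) \ast \calc_s(R_2)$ that preserves both adjacency and non-adjacency.

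The first step is to show that this restriction respects the join partition, i.e., after relabeling, $\phi(V_s(Q_j)) \subset V_s(R_j)$. Two distinct vertices $\beta, \gamma \in \lk_s(\alpha)$ lie in the same factor iff there exists $\delta \in \lk_s(\alpha) \setminus \{\beta,\gamma\}$ with $i(\delta,\beta) \neq 0$ and $i(\delta,\gamma) \neq 0$ (assuming both factors are nontrivial: such a $\delta$ must lie in the common factor, since a vertex in the opposite factor is automatically disjoint from both). This characterization is purely combinatorial and preserved by the injection $\phi$, forcing the factor partition to be respected up to swapping $R_1 \leftrightarrow R_2$. Degenerate factors---where $Q_j \cong S_{1,1}$ (h-curves) or $Q_j \cong S_{0,3}$ (p-curves), so that $\calc_s(Q_j) = \emptyset$---must be handled separately by first showing that $\phi$ sends h-curves to h-curves and p-curves to p-curves, following the template of Proposition~\ref{prop-phi-13} and using auxiliary combinatorial witnesses such as sharing pairs (Definition~\ref{defn-share}) and local hexagon configurations.

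The second step is to match the homeomorphism types $Q_j$ and $R_j$ by induction on $|\chi(S)|$. For non-degenerate $\alpha$, restricting $\phi$ to each factor yields an induced superinjective map $\calc_s(Q_j) \to \calc_s(R_j)$, to which the inductive hypothesis applies after a short check that $Q_j$ satisfies one of the listed complexity conditions; this forces $Q_j \cong R_j$. When a factor has genus zero, its complex of separating curves degenerates and one instead passes to $\calc(Q_j)$ and invokes Theorem~\ref{thm-sha}. The Euler characteristic constraint $\chi(Q_1) + \chi(Q_2) = \chi(R_1) + \chi(R_2) = \chi(S)$ then forces the remaining topological parameters to agree.

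The main obstacle is the base cases $|\chi(S)| = 4$, namely $S_{2,2}$ and $S_{3,0}$, where the inductive machinery is unavailable and separating curves of distinct topological types may have combinatorially similar links. For example, in $S_{2,2}$ one must distinguish a curve splitting $S$ as $S_{1,1} \sqcup S_{1,3}$ from one splitting it as $S_{1,2} \sqcup S_{1,2}$; this requires combining the combinatorial detection of h-curves and p-curves with finer data from curves \emph{outside} $\lk_s(\alpha)$ whose intersection pattern with vertices of $\lk_s(\alpha)$ witnesses the asymmetry of the splitting. A secondary subtlety is ruling out a ``diagonal'' mismatch in which $\phi$ spreads some vertices of $V_s(Q_1)$ into $V_s(R_1)$ and others into $V_s(R_2)$, but this is precisely what the combinatorial characterization in the first step forbids.
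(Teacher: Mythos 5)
First, note that the paper does not prove this lemma at all: it is imported verbatim from Lemmas 3.18 and 3.19 of \cite{kida}, so there is no in-paper argument to compare yours against, and the comparison has to be on the internal merits of your proposal. Your framework --- the join decomposition $\lk_s(\alpha)\cong\calc_s(Q_1)\ast\calc_s(Q_2)$ and the observation that ``$\beta$ and $\gamma$ admit a common intersector inside the link'' is a certificate, preserved by superinjectivity, for lying in the same factor --- is sound as far as it goes, and it does show that each nonempty $V_s(Q_j)$ is mapped into a single factor $V_s(R_{j'})$. But it does not show everything you claim: it forbids one factor from being spread across both $R$-factors, yet it does not forbid \emph{both} $V_s(Q_1)$ and $V_s(Q_2)$ from landing in the \emph{same} $V_s(R_{j'})$, because the witness $\delta'$ certifying that $\phi(\beta)$ and $\phi(\gamma)$ share a factor need not lie in the image of $\phi$ and so cannot be pulled back to contradict $\beta$ and $\gamma$ lying in different factors. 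Ruling this out already requires a quantitative invariant (for instance comparing the dimensions of maximal simplices of $\calc_s(Q_1)\ast\calc_s(Q_2)$ and of $\calc_s(R_{j'})$), which you do not supply.

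The two more serious gaps are these. First, your step for matching homeomorphism types --- ``restricting $\phi$ to each factor yields a superinjective map $\calc_s(Q_j)\to\calc_s(R_j)$, to which the inductive hypothesis applies'' --- is a category error: the inductive hypothesis is a statement about superinjective \emph{self}-maps of $\calc_s(X)$ for a single surface $X$, whereas the restricted map goes between the complexes of two a priori different surfaces $Q_j$ and $R_j$. Nothing in the induction lets you conclude $Q_j\cong R_j$; that conclusion is exactly the content of the lemma, and it has to come from combinatorial invariants that pin down the topology of $R_j$ (counts of vertices in maximal simplices, how many of them are h-curves, and so on), which is the kind of argument the cited Lemmas 3.18 and 3.19 of \cite{kida} actually carry out. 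The Euler-characteristic bookkeeping at the end only helps once one side has already been matched. Second, the degenerate and base cases, which you correctly identify as carrying most of the content (h-curves, p-curves, $S_{2,2}$, $S_{3,0}$), are deferred to sharing pairs and the template of Proposition \ref{prop-share}; but the preservation of sharing pairs is itself downstream of this lemma (it presupposes that $\phi$ preserves h-curves and the topological type of their complementary sides), so invoking it here is circular. As written, the proposal is a plausible outline of the easier half of the statement with the hard half left open.
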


The following proposition is essentially due to \cite{bm}, where closed surfaces are dealt with (see Section 5.3 in \cite{kida} for the case where a surface has non-empty boundary).

\begin{prop}\label{prop-share}
Let $S=S_{g, p}$ be a surface with $g\geq 2$ and $\left|\chi(S)\right|\geq 4$, and let $\phi \colon \calc_s(S)\rightarrow \calc_s(S)$ be a superinjective map. 
Then the following assertions hold:
\begin{enumerate}
\item The map $\phi$ preserves sharing pairs.
\item Pick a non-separating curve $c$ in $S$ and let $\{ \alpha_1, \beta_1\}$ and $\{ \alpha_2, \beta_2\}$ be sharing pairs for $c$. 
Then $\{ \phi(\alpha_1), \phi(\beta_1)\}$ and $\{ \phi(\alpha_2), \phi(\beta_2)\}$ are sharing pairs for the same non-separating curve in $S$.
\end{enumerate}
\end{prop}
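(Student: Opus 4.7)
The plan is to verify both assertions by encoding the geometric data of a sharing pair — $\alpha$ and $\beta$ being h-curves whose handles intersect in an annulus with core $c$ and have connected union-complement — in combinatorial terms that a superinjective map must respect. The basic input is Lemma \ref{lem-pre-top}: $\phi$ preserves the topological type of every separating curve, so in particular h-curves are sent to h-curves and p-curves to p-curves.

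For assertion (i), the strategy is to attach to each sharing pair $\{\alpha, \beta\}$ a combinatorial certificate inside $\calc_s(S)$ whose existence is equivalent to the sharing condition. Concretely, the BP $b(\alpha, \beta)$ cuts off a subsurface $R\cong S_{1,2}$ containing $\alpha$, $\beta$ and $c$, and inside $R$ the pair $\{\alpha,\beta\}$ is an edge of the graph $\cal{D}(R)$ studied in Section \ref{sec-d}. I would select a finite system $\mathcal{E}$ of separating curves in $S$ — for instance, a pants-type decomposition refining $b(\alpha,\beta)$ together with chosen h-curves and p-curves on each side of the BP — such that the adjacency pattern between $\{\alpha,\beta\}$ and $\mathcal{E}$ in $\calc_s(S)$ pins down the sharing pair structure. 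Since $\phi$ is injective, simplicial, and preserves non-adjacency, $\phi(\mathcal{E})$ realizes the same adjacency pattern with $\{\phi(\alpha),\phi(\beta)\}$; combining this with Lemma \ref{lem-pre-top} applied to the curves of $\mathcal{E}$ and to $\alpha, \beta$, one reconstructs the image configuration as a sharing pair in $S$.

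For assertion (ii), once (i) is in hand, the remaining task is to verify that the non-separating curve associated to the image sharing pair is the same for every sharing pair for $c$. I would connect $\{\alpha_1,\beta_1\}$ and $\{\alpha_2,\beta_2\}$ through a finite sequence of sharing pairs for $c$ related by elementary moves that are combinatorially visible in $\calc_s(S)$ — for example, replacing one h-curve in a pair by another h-curve whose handle still overlaps the other in an annulus with core $c$, which corresponds to a specific adjacency change in $\calc_s(S)$. Invariance of the shared curve is then propagated stepwise using (i). The principal obstacle is the explicit design of the certificate $\mathcal{E}$: it must be rich enough to detect sharing yet constructible from $\calc_s(S)$-data alone, and it must survive the low-complexity regimes permitted by the hypothesis $|\chi(S)|\geq 4$. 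Handling these boundary cases is where the bulk of the work lies, and where I expect reference to the analysis in \cite{bm} and Section 5.3 of \cite{kida} to be indispensable.
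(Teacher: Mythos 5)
The paper gives no proof of this proposition; it is imported verbatim from \cite{bm} (closed case) and Section 5.3 of \cite{kida} (bounded case), so the only meaningful comparison is with the argument in those references. Your outline does track their strategy: characterize sharing pairs by a finite configuration of separating curves whose intersection pattern is visible to a superinjective map, then prove well-definedness of the shared curve by a connectivity argument. But as written the proposal has a genuine gap at its center: the certificate $\mathcal{E}$ is never constructed, and that construction \emph{is} assertion (i). It is not enough to say that some pants-type system of h- and p-curves ``pins down the sharing pair structure''; one must exhibit an explicit configuration (in \cite{bm} it is a specific collection of separating curves arranged around the two handles and the annulus $H_A\cap H_B$) and prove both directions: that every sharing pair admits such a configuration, and --- the harder direction --- that any pair of h-curves admitting a configuration with the same intersection pattern is in fact a sharing pair. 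The second direction is a topological classification of configurations realizing the pattern, and it is exactly the part that superinjectivity alone does not hand you; Lemma \ref{lem-pre-top} only controls topological types of single curves and the sides of $S_\alpha$ they lie on, not how two handles overlap.

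For assertion (ii) there is a second, independent gap. Knowing by (i) that each $\{\phi(\alpha_j),\phi(\beta_j)\}$ is a sharing pair for \emph{some} curve does not let you ``propagate invariance stepwise'': even for two sharing pairs for $c$ with a common member $\alpha$, the images are sharing pairs for the cores of $H_{\phi(\alpha)}\cap H_{\phi(\beta_1)}$ and $H_{\phi(\alpha)}\cap H_{\phi(\beta_2)}$, and these are two non-separating curves in $H_{\phi(\alpha)}$ with no a priori reason to coincide. The cited proof needs (a) a connectivity statement for the graph whose vertices are h-curves whose handle contains $c$ and whose edges are sharing pairs for $c$ --- this is precisely the graph $\mathcal{E}$ and Lemma \ref{lem-p-conn} of Section \ref{sec-22}, proved by Putman's trick --- and (b) a local lemma showing that for suitably positioned adjacent pairs the shared curve of the images is forced, e.g.\ by exhibiting a third h-curve or a separating curve that isolates the core annulus combinatorially. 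Neither ingredient appears in your sketch, and ``elementary moves that are combinatorially visible'' is naming the problem rather than solving it. You are candid that the bulk of the work lies in these two places, but that bulk is the proposition.
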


Given a superinjective map $\phi \colon \calc_s(S)\rightarrow \calc_s(S)$, we define a map $\Phi \colon V(S)\rightarrow V(S)$ as follows.
Pick $\alpha \in V(S)$.
If $\alpha$ is separating in $S$, then we set $\Phi(\alpha)=\phi(\alpha)$. 
If $\alpha$ is non-separating in $S$, then we choose a sharing pair $\{ \beta, \gamma \}$ for $\alpha$ and define $\Phi(\alpha)$ to be the non-separating curve shared by the pair $\{ \phi(\beta), \phi(\gamma)\}$.
This is well-defined thanks to Proposition \ref{prop-share}.

\begin{rem}\label{rem-gap}
In Section 4.3 of \cite{bm}, Brendle-Margalit assert that if $S$ is a closed surface of genus more than three and if $\phi \colon \calc_s(S)\rightarrow \calc_s(S)$ is a superinjective map, then the map $\Phi \colon V(S)\rightarrow V(S)$ constructed above defines a superinjective map from $\calc(S)$ into itself.
We point out gaps in their argument to prove superinjectivity of $\Phi$.
(We notice that $\phi$ and $\Phi$ are denoted by $\phi_{\star}$ and $\hat{\phi}_{\star}$, respectively, in \cite{bm}.)
To prove that for any $\alpha, \beta \in V(S)$, we have $i(\alpha, \beta)=0$ if and only if $i(\Phi(\alpha), \Phi(\beta))=0$, Brendle-Margalit make the following three steps:
\begin{enumerate}
\item[(1)] When both $\alpha$ and $\beta$ are separating in $S$, the desired equivalence for $\alpha$ and $\beta$ follows because we have $\Phi =\phi$ on $V_s(S)$ and $\phi$ is superinjective.  
\item[(2)] When both $\alpha$ and $\beta$ are non-separating in $S$, Brendle-Margalit claim that $\alpha$ and $\beta$ are disjoint if and only if there exist sharing pairs $\{ a_1, a_2\}$ for $\alpha$ and $\{ b_1, b_2\}$ for $\beta$ with $i(a_j, b_k)=0$ for any $j, k=1, 2$.
They assert that the desired equivalence for $\alpha$ and $\beta$ follows from this claim.
\item[(3)] When $\alpha$ is separating in $S$ and $\beta$ is non-separating in $S$, Brendle-Margalit claim that $\alpha$ and $\beta$ are disjoint if and only if either $\alpha$ is a part of a sharing pair for $\beta$ or there exists a sharing pair for $\beta$ whose curves are disjoint from $\alpha$.
They assert that the desired equivalence for $\alpha$ and $\beta$ follows from this claim.
\end{enumerate}

First, we point out that the claim in (2) is not correct.
This is because if $\alpha$ and $\beta$ are non-separating curves in $S$ and if $a$ and $b$ are disjoint and non-isotopic h-curves in $S$ with $\alpha \in V(H_a)$ and $\beta \in V(H_b)$, then the surface obtained by cutting $S$ along $\alpha$ and $\beta$ is connected and thus $\{ \alpha, \beta \}$ is not a BP in $S$.
It follows that if $\{ \alpha, \beta \}$ is a BP in $S$, then there exist no sharing pairs $\{ a_1, a_2\}$ for $\alpha$ and $\{ b_1, b_2\}$ for $\beta$ with $i(a_j, b_k)=0$ for any $j, k=1, 2$.
The claim in (2) can be modified as follows.

\begin{lem}\label{lem-dis-ns}
Let $S=S_{g, p}$ be a surface with $g\geq 2$ and $|\chi(S)|\geq 3$.
Let $\alpha$ and $\beta$ be non-separating curves in $S$ which are non-isotopic.
Then we have $i(\alpha, \beta)=0$ and $\{ \alpha, \beta \}$ is not a BP in $S$ if and only if there exist non-isotopic and disjoint h-curves $a$, $b$ in $S$ with $\alpha \in V(H_a)$ and $\beta \in V(H_b)$.
\end{lem}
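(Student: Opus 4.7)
The plan is to prove the two directions of the equivalence separately, with the forward direction carrying most of the substantive content.

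For the backward direction, suppose $a, b$ are disjoint non-isotopic h-curves with $\alpha \in V(H_a)$ and $\beta \in V(H_b)$. First I would show that the closed handles $\overline{H_a}$ and $\overline{H_b}$ are disjoint: if not, then since $a$ and $b$ are disjoint, one of the curves would lie in the interior of the other handle; but the only separating simple closed curves in $S_{1,1}$ are boundary-parallel, so $a$ and $b$ would be isotopic, contradicting the hypothesis. Disjointness of the handles yields disjoint representatives of $\alpha$ and $\beta$, so $i(\alpha, \beta) = 0$. For the non-BP part, I would use the decomposition
\[
S \setminus (\alpha \cup \beta) = (H_a \setminus \alpha) \cup (S \setminus (H_a \cup H_b)) \cup (H_b \setminus \beta),
\]
in which each outer piece is a pair of pants (since $\alpha, \beta$ are non-separating in their handles), and the middle piece is connected because $b$ is a simple closed curve in the connected surface $S \setminus H_a$ whose one side is the handle $H_b^{\circ}$, forcing the other side to also be connected. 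The three pieces glue along $a$ and $b$ into a connected surface.

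For the forward direction, assume $\alpha, \beta$ are disjoint non-isotopic non-separating curves with $\{\alpha, \beta\}$ not a BP, and construct $a$ and $b$ in succession. To build $a$: cut $S$ along $\beta$ to obtain $M \cong S_{g-1, p+2}$; the non-BP hypothesis makes $\alpha$ non-separating in $M$. Hence $M_{\alpha}$ is connected, and an arc in $M_{\alpha}$ joining the two boundary components coming from $\alpha$ produces, after regluing along $\alpha$, a simple closed curve $\alpha' \subset M$ with $i(\alpha, \alpha') = 1$. The embedded handle $H_a := N(\alpha \cup \alpha')$ has boundary $a$ disjoint from $\beta$, and $a$ is essential in $S$ since the estimate $\chi(S \setminus H_a) = \chi(S) + 1 \leq -2$ under $|\chi(S)| \geq 3$ excludes $S \setminus H_a$ being a disk or annulus. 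To build $b$: $\beta$ remains non-separating in $S \setminus H_a$, because any separation of $(S \setminus H_a) \setminus \beta$ into $Y_1 \sqcup Y_2$ would have $a$ in the boundary of only one component (say $Y_1$), so regluing $H_a$ along $a$ would give $S \setminus \beta = (Y_1 \cup_a H_a) \sqcup Y_2$, contradicting that $\beta$ is non-separating in $S$. Applying the same construction inside $S \setminus H_a \cong S_{g-1, p+1}$ then yields an h-curve $b$ with $\beta \in V(H_b)$, automatically disjoint from $a$ and still an h-curve of $S$ because $H_b$ remains a handle in $S$.

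Finally, I would check that $a$ and $b$ are non-isotopic. If they cobounded an annulus $A$, then since $b \subset S \setminus H_a$ we would have $A \subset S \setminus H_a$, making $b$ boundary-parallel to $a$ in $S \setminus H_a$; the handle $H_b$ would then have to exhaust $S \setminus H_a$ up to a collar, forcing $S \setminus H_a \cong S_{1,1}$ and thus $(g, p) = (2, 0)$, which is excluded by $|\chi(S)| \geq 3$. The main technical obstacle is packaging these essentiality and non-isotopy verifications cleanly; all of them reduce to the Euler-characteristic inequality $|\chi(S)| \geq 3$ ruling out the low-complexity surface $S_{2,0}$ and other degenerate configurations.
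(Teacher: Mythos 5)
Your proof is correct, and the backward direction is essentially the paper's (both reduce to the disjointness of the two handles). For the forward direction, however, you take a genuinely different route. The paper cuts $S$ along $\alpha$ and $\beta$ simultaneously: since $\{\alpha,\beta\}$ is not a BP, the resulting surface $Q$ is connected and homeomorphic to $S_{g-2,p+4}$, and one simply chooses two disjoint p-curves in $Q$, each cutting off a pair of pants containing the two boundary components of $Q$ coming from $\alpha$ (resp.\ $\beta$); under the inclusion $V(Q)\subset V(S)$ these become the desired h-curves $a$ and $b$. That choice makes essentiality and non-isotopy nearly automatic (the two pairs of pants are disjoint and contain different boundary components), so the paper's proof is four lines. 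You instead build the handles sequentially as regular neighborhoods $N(\alpha\cup\alpha')$ and $N(\beta\cup\beta')$ of the curve together with a dual curve, which forces you to verify by hand that $\alpha$ stays non-separating after cutting along $\beta$, that $\beta$ stays non-separating in $S\setminus H_a$, that each boundary curve is essential (via $\chi(S\setminus H_a)\le -2$), and that $a$ and $b$ are non-isotopic (via the annulus argument excluding $S_{2,0}$). All of these checks go through, so your argument is complete; it is more constructive but longer, and the simultaneous-cut formulation is the cleaner way to exploit the non-BP hypothesis, which enters your version only indirectly through the claim that $\alpha$ is non-separating in $S_\beta$.
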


\begin{proof}
The ``if" part follows because $H_a$ and $H_b$ are disjoint when they are identified with their image via the natural inclusion into $S$.
If $i(\alpha, \beta)=0$ and $\{ \alpha, \beta \}$ is not a BP in $S$, then the surface $Q$ obtained by cutting $S$ along $\alpha$ and $\beta$ is homeomorphic to $S_{g-2, p+4}$. 
Choose p-curves $a$, $b$ in $Q$ such that $i(a, b)=0$ and the pair of pants cut off by $a$ (resp.\ $b$) from $Q$ contains the two components of $\partial Q$ corresponding to $\alpha$ (resp.\ $\beta$).
The curves $a$ and $b$ are h-curves in $S$ via the inclusion of $V(Q)$ into $V(S)$, which cut off handles from $S$ containing $\alpha$ and $\beta$, respectively.
\end{proof}

By the definition of $\Phi$, if $\gamma$ is a non-separating curve in $S$ and $c$ is an h-curve in $S$ with $\gamma \in V(H_c)$, then we have $\Phi(\gamma)\in V(H_{\phi(c)})$.
Using this fact and Lemma \ref{lem-dis-ns}, one can directly show that if $\alpha$ and $\beta$ are disjoint non-separating curves in $S$ such that $\{ \alpha, \beta \}$ is not a BP in $S$, then $\Phi(\alpha)$ and $\Phi(\beta)$ are disjoint.

Secondly, the claim in (3) does not immediately imply that for any separating curve $\alpha$ in $S$ and any non-separating curve $\beta$ in $S$ with $i(\Phi(\alpha), \Phi(\beta))=0$, we have $i(\alpha, \beta)=0$.
This is because we do not assume surjectivity of $\phi$.

In conclusion, to fill these gaps, we need to show that
\begin{enumerate}
\item[(a)] if $\{ \alpha, \beta \}$ is a BP in $S$, then $i(\Phi(\alpha), \Phi(\beta))=0$; and
\item[(b)] if $\alpha$ and $\beta$ are curves in $S$ with $i(\Phi(\alpha), \Phi(\beta))=0$ and if $\beta$ is non-separating in $S$, then $i(\alpha, \beta)=0$.
\end{enumerate}
We will prove assertion (a) in Lemma \ref{lem-simp} and also prove that $\{ \Phi(\alpha), \Phi(\beta)\}$ is a BP in $S$ for each BP $\{ \alpha, \beta \}$ in $S$ by using facts on the graph $\cal{D}$ shown in Section \ref{sec-d}.
Although we do not prove assertion (b) directly, we show that $\Phi$ is induced by an element of $\mod^*(S)$ by proving surjectivity of $\phi$.

If $\phi$ is an automorphism of $\calc_s(S)$, then $\Phi$ is a bijection from $V(S)$ into itself and the map from $V(S)$ into itself associated to $\phi^{-1}$ is equal to $\Phi^{-1}$.
In this case, we can show simpliciality of $\Phi$ (and thus that of $\Phi^{-1}$) without effort as precisely discussed in the proof of Theorem 5.18 of \cite{kida}.
Brendle-Margalit's proof of their Main Theorem 1 in \cite{bm} and Theorem 1 in \cite{bm-add}, stating the natural isomorphism between $\mod^*(S)$ and the abstract commensurator of $\calk(S)$ when $S$ is a closed surface of genus at least three, is therefore valid.
\end{rem}

We now prove simpliciality of $\Phi$ in the following:

\begin{lem}\label{lem-simp}
Let $S=S_{g, p}$ be a surface with $g\geq 2$ and $\left|\chi(S)\right|\geq 4$, and let $\phi \colon \calc_s(S)\rightarrow \calc_s(S)$ be a superinjective map. 
Then the map $\Phi \colon V(S)\rightarrow V(S)$ constructed right after Proposition \ref{prop-share} defines a simplicial map from $\calc(S)$ into itself. 
\end{lem}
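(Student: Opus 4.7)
Simpliciality of $\Phi$ amounts to the implication $i(\alpha,\beta)=0\Rightarrow i(\Phi(\alpha),\Phi(\beta))=0$ for $\alpha,\beta\in V(S)$. The plan is to split into four cases according to whether each of $\alpha,\beta$ is separating or not, and to dispose of them in increasing order of difficulty. The case where both $\alpha$ and $\beta$ are separating is immediate, as $\Phi$ agrees on $V_s(S)$ with the simplicial map $\phi$.

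Suppose next that $\alpha\in V_s(S)$ and $\beta\in V(S)$ is non-separating, with $i(\alpha,\beta)=0$. Let $Q$ be the component of $S_{\alpha}$ containing $\beta$. Since $\beta$ is non-separating in $S$ it is also non-separating in $Q$, so $Q$ has positive genus; pick a curve $c\subset Q$ with $i(\beta,c)=1$. The boundary of a regular neighborhood of $\beta\cup c$ in $Q$ is an h-curve $a$ of $S$ satisfying $\beta\in V(H_a)$ and $a$ disjoint from $\alpha$ (in the degenerate case $Q\cong S_{1,1}$ the construction forces $a=\alpha$, which is permitted). Using the $\pmod(S)$-transitivity on sharing pairs noted after Definition \ref{defn-share}, extend $a$ to a sharing pair $\{a,b\}$ for $\beta$. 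Proposition \ref{prop-share} gives $\Phi(\beta)\in V(H_{\phi(a)})$. Simpliciality of $\phi$ makes $\phi(a)$ disjoint from $\phi(\alpha)=\Phi(\alpha)$; since $H_{\phi(a)}\cong S_{1,1}$ contains no separating curves of $S$, the separating $\Phi(\alpha)$ lies outside $H_{\phi(a)}$, forcing $\Phi(\beta)\subset H_{\phi(a)}$ to be disjoint from $\Phi(\alpha)$.

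Assume now that $\alpha,\beta$ are both non-separating, disjoint, and do not form a BP. By Lemma \ref{lem-dis-ns} there exist disjoint non-isotopic h-curves $a,b$ with $\alpha\in V(H_a)$ and $\beta\in V(H_b)$. By the construction of $\Phi$ (as recalled in Remark \ref{rem-gap}) one has $\Phi(\alpha)\in V(H_{\phi(a)})$ and $\Phi(\beta)\in V(H_{\phi(b)})$; simpliciality of $\phi$ makes $\phi(a),\phi(b)$ disjoint h-curves, and the same $S_{1,1}$-has-no-separating-curves observation gives that $H_{\phi(a)}$ and $H_{\phi(b)}$ are disjoint handles, so $\Phi(\alpha)$ and $\Phi(\beta)$ are disjoint.

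The main obstacle is the remaining case where $\{\alpha,\beta\}$ is a BP. Here Lemma \ref{lem-dis-ns} precisely fails, so none of the previous handle-disjointness arguments applies and one must invoke Section \ref{sec-d}. The plan is to locate a subsurface $R\subset S$ homeomorphic to $S_{1,2}$ in which $\alpha,\beta$ appear as a BP of $R$, translate this BP via the arc-curve correspondence opening Section \ref{sec-d} into an edge of $\cal D(R)$, and transfer this edge through $\phi$ using Proposition \ref{prop-share} together with topological-type preservation (Lemma \ref{lem-pre-top}). Proposition \ref{prop-d} then forces $\{\Phi(\alpha),\Phi(\beta)\}$ to be a BP in $S$, yielding in particular $i(\Phi(\alpha),\Phi(\beta))=0$; this is exactly assertion (a) of Remark \ref{rem-gap} and is the principal difficulty.
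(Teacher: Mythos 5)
Your first three cases (both separating; separating vs.\ non-separating; disjoint non-separating curves not forming a BP) are correct and follow essentially the same route as the paper: $\Phi=\phi$ on $V_s(S)$, containment of a non-separating curve in a handle $H_a$ with $\Phi(\beta)\in V(H_{\phi(a)})$, and Lemma \ref{lem-dis-ns} plus disjointness of the image handles. The problem is the BP case, which you rightly single out as the principal difficulty but for which you offer only a plan, and the plan's central mechanism does not work. A BP of a subsurface $R\cong S_{1,2}$ cannot be ``translated into an edge of $\cal{D}(R)$'': an edge of $\cal{D}(R)$ is a pair of separating curves of $R$ with intersection number $4$, and the associated non-separating datum is the \emph{single} curve $c(\alpha,\beta)$ (the core of the annulus obtained by cutting along the two arcs). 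A BP in $R$ decomposes $R$ into two pairs of pants, so there is no separating curve of $R$ disjoint from it, and hence no configuration in $\cal{D}(R)$ detects it by disjointness. Moreover, Proposition \ref{prop-d} asserts surjectivity of injective self-maps of $\cal{D}$; it does not by itself ``force'' the image of a BP to be a BP, and in the paper BP-preservation (Lemma \ref{lem-bp-bp}) is deduced \emph{after} disjointness is known, by counting disjoint h-curves in the complement — so your plan also inverts the logical order without justification.

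The missing content is precisely how to relate $\Phi(\alpha)$ and $\Phi(\beta)$, which are defined via two different sharing pairs living in two different handles that necessarily overlap when $\{\alpha,\beta\}$ is a BP. The paper's argument chooses a common dual curve $c$ with $i(\alpha,c)=i(\beta,c)=1$, forms the handles $H$ (filled by $\alpha,c$) and $K$ (filled by $\beta,c$), and first proves (Claim \ref{claim-handle-bij}, using Claim \ref{claim-pre-r}, Proposition \ref{prop-d} and the fibration $\pi\colon L\to\cal{F}$ of Section \ref{subsec-fiber}) that $\Phi$ restricts to an isomorphism $\cal{F}(H_\gamma)\to\cal{F}(H_{\phi(\gamma)})$ for every h-curve $\gamma$. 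It then pins down $\{\Phi(t_c^{\pm1}(\alpha))\}=\{t_{\Phi(c)}^{\pm1}(\Phi(\alpha))\}$ and analyzes the arcs of $\Phi(\beta)\cap\phi(H)$, using Lemma \ref{lem-dis-arc} to conclude that $\Phi(\beta)$ misses $\Phi(\alpha)$. None of these steps appears in your proposal, and without them the case $i(\alpha,\beta)=0$ with $\{\alpha,\beta\}$ a BP — the only case in which simpliciality of $\Phi$ is actually in doubt — remains unproved.
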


Before proving this lemma, we make a brief observation on the set $A(H)$ of isotopy classes of essential simple arcs in a handle $H$, defined in Section \ref{subsec-term}.
For each $l\in A(H)$ and each $a\in V(H)$, we define $i(l, a)$ to be the minimal cardinality of the intersection of representatives of $l$ and $a$.

\begin{lem}\label{lem-dis-arc}
Let $H$ be a handle and choose two curves $a$, $c$ in $H$ with $i(a, c)=1$. 
Then for each $l\in A(H)$, we have either $i(l, a)=0$ or $i(l, c)=0$ if and only if we have $i(l, t_c(a))=i(l, t_c^{-1}(a))=1$.
\end{lem}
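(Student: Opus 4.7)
The plan is to coordinatize curves and arcs on $H$ by their homology slopes and thereby reduce the lemma to an elementary computation in $\mathbb{Z}^2$. Fix orientations of $H$, $a$, and $c$, and choose the basis of $H_1(H; \mathbb{Z}) \cong \mathbb{Z}^2$ in which $[a] = (1, 0)$ and $[c] = (0, 1)$; this is an honest basis because $i(a, c) = 1$ forces the algebraic intersection of $a$ and $c$ to be $\pm 1$. The standard formula for a Dehn twist on homology then gives $[t_c(a)] = (1, 1)$ and $[t_c^{-1}(a)] = (1, -1)$, with the sign choice inherited from the previous step; swapping the two only exchanges the roles of $t_c$ and $t_c^{-1}$, which the statement is insensitive to.

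For each $l \in A(H)$ set $p := l \cdot a$ and $q := l \cdot c$, where $\cdot$ denotes algebraic intersection; then $(p, q) \in \mathbb{Z}^2$ is primitive and classifies $l$ up to isotopy, since essential simple arcs in a handle are parametrized by primitive classes in $H_1(H, \partial H; \mathbb{Z})$ up to sign. The key input is that on $H$ every essential arc together with every essential simple closed curve can be simultaneously straightened to geodesics for a flat metric (equivalently, everything lifts to straight segments in the universal cover of the ambient torus), so the geometric intersection number coincides with the absolute value of the algebraic intersection in each of the pairings of interest. Expanding bilinearly with the homology classes recorded above then yields
\[
i(l, a) = |p|, \quad i(l, c) = |q|, \quad i(l, t_c(a)) = |p + q|, \quad i(l, t_c^{-1}(a)) = |p - q|.
\]

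Both directions of the lemma are immediate from these four formulas. If $i(l, a) = 0$ or $i(l, c) = 0$, then $p = 0$ or $q = 0$; primitivity of $(p, q)$ forces the remaining coordinate to be $\pm 1$, whence $|p + q| = |p - q| = 1$. Conversely, $|p + q| = |p - q| = 1$ gives $(p + q)^2 = (p - q)^2$, hence $4pq = 0$, so one of $p, q$ vanishes and the corresponding intersection number is zero. The only non-trivial step is the identification of geometric with algebraic intersection on $H$, which I expect to be the main obstacle in a formal write-up; granted that, the lemma reduces to arithmetic in $\mathbb{Z}^2$.
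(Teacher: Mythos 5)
Your argument is correct, but it takes a genuinely different route from the paper's. The paper argues combinatorially: it uses the bijection $l\mapsto c(l)$ between essential arcs and essential curves in $H$ (where $c(l)$ is the unique curve disjoint from $l$), together with the observation that $i(l,b)=1$ if and only if $i(c(l),b)=1$, and then reduces the lemma to the statement that $a$ and $c$ are the only common neighbours of $t_c(a)$ and $t_c^{-1}(a)$ in the Farey graph $\mathcal{F}(H)$. You instead coordinatize arcs and curves by (relative) homology and compute all four intersection numbers as $|p|$, $|q|$, $|p\pm q|$. The two proofs bottom out in the same arithmetic on primitive pairs --- the Farey-graph fact the paper invokes is precisely the computation that $|x-y|=|x+y|=1$ forces $(x,y)\in\{(\pm 1,0),(0,\pm 1)\}$ --- but they need different geometric inputs. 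The paper gets away with the qualitative facts about $c(l)$ and stays inside the $\mathcal{F}(H)$-framework already set up in Section 3, whereas you need the stronger quantitative statement that on a one-holed torus the geometric intersection number of an essential arc with an essential curve equals the absolute value of the algebraic one. You rightly flag this as the step requiring the most care; your straightening sketch is the correct justification (a bigon between straight representatives would lift to a bigon between two straight lines in the plane, and half-bigons do not arise because the curve is closed and disjoint from $\partial H$, so the mod $2$ count is an isotopy invariant). Granting that input, the reduction to arithmetic in $\mathbb{Z}^2$ is sound, including the use of primitivity of $(p,q)$ in the forward direction. What your approach buys is explicit formulas and independence from the Farey-graph machinery; what the paper's buys is that it needs weaker inputs and meshes with the rest of Section 3.
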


\begin{proof} 
There is a one-to-one correspondence between elements of $V(H)$ and of $A(H)$. 
Namely, for each $l\in A(H)$, there exists a unique element $c(l)\in V(H)$ with $i(l, c(l))=0$, and vice versa.
A representative of $c(l)$ is obtained as a boundary component of a regular neighborhood of the union of $\partial H$ and a representative of $l$ in $H$.
Note that for each $l\in A(H)$ and each $c\in V(H)$, we have $i(l, c)=1$ if and only if we have $i(c(l), c)=1$.

Each of $\{ a, c, t_c(a)\}$ and $\{ a, c, t_c^{-1}(a)\}$ forms a triangle in the graph $\cal{F}(H)$.
Since $a$ and $c$ are the only vertices adjacent to both $t_c(a)$ and $t_c^{-1}(a)$ in $\cal{F}(H)$, for each $b\in V(H)$, we have $i(b, t_c(a))=i(b, t_c^{-1}(a))=1$ if and only if $b$ is equal to either $a$ or $c$.
The claim thus follows.
\end{proof}

\begin{proof}[Proof of Lemma \ref{lem-simp}]
It follows from the definition of $\Phi$ that in general, if $\alpha$ is an h-curve in $S$ and $c$ is a non-separating curve in $H_{\alpha}$, then $\Phi(\alpha)$ is also an h-curve in $S$, and $\Phi(c)$ is a curve in the handle $H_{\Phi(\alpha)}$.

Let $\alpha$ and $\beta$ be disjoint curves in $S$. 
If both $\alpha$ and $\beta$ are separating in $S$, then $\Phi(\alpha)$ and $\Phi(\beta)$ are disjoint since $\phi$ is simplicial. 
If $\alpha$ is separating in $S$ and $\beta$ is non-separating in $S$, then there exists an h-curve $\gamma$ in $S$ with $i(\gamma, \alpha)=0$ and $\beta \in V(H_{\gamma})$. 
Since $\alpha$ is either equal to $\gamma$ or in the component of $S_{\gamma}$ that is not a handle, the curves $\Phi(\alpha)$ and $\Phi(\beta)$ are disjoint.

Finally, we suppose that $\alpha$ and $\beta$ are both non-separating in $S$ and non-isotopic. 
If there exist non-isotopic and disjoint h-curves $\gamma$ and $\delta$ in $S$ with $\alpha \in V(H_{\gamma})$ and $\beta \in V(H_{\delta})$, then $\Phi(\alpha)$ and $\Phi(\beta)$ are disjoint because $H_{\phi(\gamma)}$ and $H_{\phi(\delta)}$ are disjoint and we have $\Phi(\alpha)\in V(H_{\phi(\gamma)})$ and $\Phi(\beta)\in V(H_{\phi(\delta)})$. 
Otherwise $\alpha$ and $\beta$ form a BP in $S$ by Lemma \ref{lem-dis-ns}. 
After proving the following two claims, we show that $\Phi(\alpha)$ and $\Phi(\beta)$ are disjoint in this case.

\begin{claim}\label{claim-pre-r}
Let $\{ \alpha, \beta \}$ be a sharing pair in $S$. 
We denote by $R$ the surface cut off by the BP $b(\alpha, \beta)$ from $S$ and containing $\alpha$ and $\beta$.
Similarly, we denote by $\phi(R)$ the surface cut off by the BP $b(\phi(\alpha), \phi(\beta))$ from $S$ and containing $\phi(\alpha)$ and $\phi(\beta)$.
Then we have the inclusion $\phi(V_s(R))\subset V_s(\phi(R))$.
\end{claim}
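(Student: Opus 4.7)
The plan is to argue by induction on distance from $\alpha$ in the graph $\cal{D}(R)$ introduced in Section \ref{sec-d}, whose vertex set is $V_s(R)$ and which is connected by Lemma \ref{lem-d-conn}. Since $R\cong S_{1,2}$, any $\gamma\in V_s(R)$ is an h-curve both in $R$ and in $S$, with $H_\gamma\subset R$; correspondingly, $V_s(\phi(R))$ consists of those h-curves $\eta$ of $S$ with $H_\eta\subset\phi(R)$. The base case $\gamma=\alpha$ (and $\gamma=\beta$) is immediate: by the construction of $\phi(R)$ as the sharing pair surface of $\{\phi(\alpha),\phi(\beta)\}$, both $H_{\phi(\alpha)}$ and $H_{\phi(\beta)}$ are contained in $\phi(R)$.

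For the inductive step, the first point is that every edge $\{\gamma,\delta\}$ of $\cal{D}(R)$ is a sharing pair in $S$. Indeed, $\gamma$ and $\delta$ are h-curves of $S$ with $i(\gamma,\delta)=4$; by Lemma \ref{lem-arc-disjoint} and the discussion following it, $H_\gamma\cap H_\delta$ is an annulus whose core $c(\gamma,\delta)$ is a non-separating curve of $R$; and $S\setminus(H_\gamma\cup H_\delta)$ is connected, because $S\setminus R$ is connected (being bounded by the BP $b(\alpha,\beta)$) and is glued to each component of $R\setminus(H_\gamma\cup H_\delta)$ along one of the curves of this BP. Proposition \ref{prop-share}(i) then gives that $\{\phi(\gamma),\phi(\delta)\}$ is a sharing pair in $S$, with shared non-separating curve $c^{\sharp}\in V(H_{\phi(\gamma)})\cap V(H_{\phi(\delta)})$.

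Assuming inductively that $\phi(\delta)\in V_s(\phi(R))$, so that $H_{\phi(\delta)}\subset\phi(R)$, we obtain $c^{\sharp}\in V(\phi(R))$. Thus $H_{\phi(\gamma)}$ contains a non-separating curve lying in the interior of $\phi(R)$. The main obstacle is to upgrade this to $H_{\phi(\gamma)}\subset\phi(R)$. I plan to argue by contradiction: if $H_{\phi(\gamma)}$ is not contained in $\phi(R)$ it must cross the BP $\{f_1,f_2\}=b(\phi(\alpha),\phi(\beta))$ bounding $\phi(R)$. Because $H_{\phi(\gamma)}\cong S_{1,1}$ admits no two disjoint non-isotopic non-separating curves, the two curves $f_1,f_2$ cannot both lie inside $H_{\phi(\gamma)}$; and $\phi(\gamma)$ cannot sit entirely in $S\setminus\phi(R)$, since then $H_{\phi(\gamma)}$ would have to swallow all of $\phi(R)$ (which contains $c^{\sharp}$), contradicting $|\chi(\phi(R))|=2>1=|\chi(H_{\phi(\gamma)})|$. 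The delicate remaining case is that $\phi(\gamma)$ itself meets $\{f_1,f_2\}$ transversely; I expect to exclude it by combining the location of the annulus $H_{\phi(\gamma)}\cap H_{\phi(\delta)}\subset\phi(R)$, the connectedness of $S\setminus(H_{\phi(\gamma)}\cup H_{\phi(\delta)})$ guaranteed by the sharing pair condition, and Euler characteristic estimates using $|\chi(S)|\geq 4$, which force $S\setminus\phi(R)$ to have strictly smaller Euler characteristic than a handle and thereby obstruct the required straddling configuration.
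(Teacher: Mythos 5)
Your reduction is sound up to a point: edges of $\cal{D}(R)$ are indeed sharing pairs in $S$ (connectedness of $S\setminus R$ is exactly what is needed, and you invoke it correctly), Proposition \ref{prop-share} then applies, and your base case and the two cases you do dispose of are fine. The gap is the ``delicate remaining case,'' which is the entire content of the inductive step, and it cannot be closed with the tools you list. The implication you are after --- if $\{x,y\}$ is a sharing pair for $c^{\sharp}$ and $H_{y}\subset\phi(R)$, then $H_{x}\subset\phi(R)$ --- is simply false. Starting from a sharing partner $x_{0}$ of $y$ with $H_{x_{0}}\subset\phi(R)$, choose a curve $z$ disjoint from $y$ and from $c^{\sharp}$ but essentially intersecting both $x_{0}$ and one curve of $b(\phi(\alpha),\phi(\beta))$; then $\{t_{z}(x_{0}),y\}$ is still a sharing pair for $c^{\sharp}$ (it is the image of one under a homeomorphism fixing $y$ and $c^{\sharp}$), the complement of the union of the two handles is still connected, every Euler characteristic in sight is unchanged, and yet $t_{z}(x_{0})$ crosses the bounding pair, so $H_{t_{z}(x_{0})}\not\subset\phi(R)$. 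Hence no argument using only the sharing relation with one previously placed handle, connectivity of the complement, and $|\chi|$ counts can exclude your remaining case; some constraint on $\phi(\gamma)$ beyond those must be brought in, and your proposal supplies none.

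The paper closes exactly this hole by a mechanism absent from your sketch: it chooses a separating curve cutting off an $S_{2,1}$ containing $R$ and a second separating curve disjoint from $R$ but crossing the first, uses Lemma \ref{lem-pre-top} and superinjectivity to show that the images of these two auxiliary curves trap $b(\phi(\alpha),\phi(\beta))$ as boundary components of a regular neighborhood of their union, and then observes that every element of $V_{s}(R)$ other than $\alpha$ and $\beta$ is disjoint from both auxiliary curves while meeting $\alpha$; superinjectivity then forces its $\phi$-image into $\phi(R)$ in one stroke, with no induction. To salvage your scheme you would have to record, at each inductive step, the disjointness of $\gamma$ from separating curves lying outside $R$ and feed that through superinjectivity --- at which point the induction along $\cal{D}(R)$ becomes superfluous.
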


\begin{proof}
Note that each of $R$ and $\phi(R)$ is homeomorphic to $S_{1, 2}$.
Choose a separating curve $\gamma$ in $S$ cutting off a surface which contains $R$ and is homeomorphic to $S_{2, 1}$. 
We pick a separating curve $\delta$ in $S$ with $i(\alpha, \delta)=i(\beta, \delta)=0$ and $i(\gamma, \delta)\neq 0$.
By Lemma \ref{lem-pre-top}, $\phi(\gamma)$ cuts off from $S$ a surface homeomorphic to $S_{2, 1}$ and containing $\phi(R)$.
Superinjectivity of $\phi$ implies that $\phi(\delta)$ is disjoint from $\phi(\alpha)$ and $\phi(\beta)$ and intersects $\phi(\gamma)$.
It follows that if $C$ and $D$ are representatives of $\phi(\gamma)$ and $\phi(\delta)$, respectively, with $|C\cap D|=i(\phi(\gamma), \phi(\delta))$, then the two curves in $b(\phi(\alpha), \phi(\beta))$ are boundary components of a regular neighborhood of $C\cup D$ in $S$.
If a separating curve $\epsilon$ in $S$ satisfies $i(\phi(\gamma), \epsilon)=i(\phi(\delta), \epsilon)=0$ and either $i(\phi(\alpha), \epsilon)\neq 0$ or $i(\phi(\beta), \epsilon)\neq 0$, then $\epsilon$ is a curve in $\phi(R)$. 
The claim thus follows.
\end{proof}

\begin{claim}\label{claim-handle-bij}
For each h-curve $\alpha$ in $S$, the restriction of $\Phi$ to $V(H_{\alpha})$ induces an isomorphism between the graphs $\cal{F}(H_{\alpha})$ and $\cal{F}(H_{\phi(\alpha)})$.
\end{claim}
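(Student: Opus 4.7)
The plan is to lift $\Phi|_{V(H_\alpha)}$ to a map on an auxiliary graph $\cal{D}(R)$ attached to an $S_{1,2}$-subsurface, exploit the rigidity Proposition~\ref{prop-d}, and descend via the fibration from Section~\ref{subsec-fiber}. That $\Phi$ restricts to a map $V(H_\alpha)\to V(H_{\phi(\alpha)})$ is already noted in the opening of the proof of Lemma~\ref{lem-simp}, so the task is to promote this restriction to a graph isomorphism of Farey graphs.

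Step 1: I fix a sharing pair $\{\alpha,\beta_0\}$ in $S$ and let $R\cong S_{1,2}$ be the subsurface cut off by $b(\alpha,\beta_0)$ containing $\alpha$ and $\beta_0$; define $\phi(R)\cong S_{1,2}$ analogously for the image pair. Since $H_\alpha\subset R$, one has $V(H_\alpha)\subset V(R)$. By Claim~\ref{claim-pre-r}, $\phi$ restricts to a map $V_s(R)\to V_s(\phi(R))$. The crucial point is that edges of $\cal{D}(R)$---pairs of separating curves in $R$ with intersection number $4$---coincide with sharing pairs in $S$ whose curves both lie in $R$, and such sharing pairs are preserved by $\phi$ by Proposition~\ref{prop-share}(i). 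Thus $\phi$ restricts to an injective simplicial map $\cal{D}(R)\to\cal{D}(\phi(R))$, and Proposition~\ref{prop-d}, applied after fixing a homeomorphism $\phi(R)\to R$, upgrades it to a graph isomorphism.

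Step 2: Passing to the links $L_\alpha$ of $\alpha$ in $\cal{D}(R)$ and $L_{\phi(\alpha)}$ of $\phi(\alpha)$ in $\cal{D}(\phi(R))$, the restriction is a graph isomorphism $L_\alpha\to L_{\phi(\alpha)}$. The maps $\pi\colon L_\alpha\to\cal{F}(H_\alpha)$ and $\pi_\phi\colon L_{\phi(\alpha)}\to\cal{F}(H_{\phi(\alpha)})$ of Section~\ref{subsec-fiber} (sending $\gamma$ to $c(\alpha,\gamma)$ and to $c(\phi(\alpha),\gamma)$ respectively) are surjective, and by the definition of $\Phi$ combined with Proposition~\ref{prop-share}(ii) the square $\pi_\phi\circ\phi=\Phi\circ\pi$ commutes. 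Surjectivity of $\Phi|_{V(H_\alpha)}$ is then immediate, and strict preservation of adjacency follows from Lemma~\ref{lem-d-line}: $\cal{D}$-adjacent lifts $\beta\in\pi^{-1}(a),\gamma\in\pi^{-1}(b)$ of adjacent $a,b\in\cal{F}(H_\alpha)$ exist, and their images $\phi(\beta),\phi(\gamma)$ are $\cal{D}$-adjacent; by Lemma~\ref{lem-d-line} on the image side they cannot lie in the same $\pi_\phi$-fiber, yielding $\Phi(a)\neq\Phi(b)$ and $i(\Phi(a),\Phi(b))=1$.

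For full injectivity I would additionally lift edges: given an edge $\{v,w\}$ of the target Farey graph, Lemma~\ref{lem-d-line} produces $\cal{D}$-adjacent $\zeta\in\pi_\phi^{-1}(v),\eta\in\pi_\phi^{-1}(w)$, and the inverse graph isomorphism $\phi^{-1}\colon L_{\phi(\alpha)}\to L_\alpha$ maps them to $\cal{D}$-adjacent vertices of $L_\alpha$ in distinct fibers $\pi^{-1}(a),\pi^{-1}(b)$ with $\Phi(a)=v,\Phi(b)=w$ and $i(a,b)=1$, so $\Phi|_{V(H_\alpha)}$ is also surjective on edges. The Farey-graph rigidity recorded in Section~\ref{subsec-fiber} (each edge lies in exactly two triangles; any two triangles are chain-connected) then forces bijectivity: a simplicial surjection of the Farey graph that is strict on adjacency and surjective on edges must be an automorphism, by triangle-by-triangle propagation, equivalently by viewing the map as a covering of the simply-connected modular tessellation. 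The main obstacle I anticipate lies in Step~1, namely the careful verification that edges of $\cal{D}(R)$ coincide with in-$R$ sharing pairs of $S$ and that the restricted $\phi$ satisfies the hypotheses of Proposition~\ref{prop-d}; once this bookkeeping is secured, the rest is a routine fibration-descent argument using the material of Section~\ref{sec-d}.
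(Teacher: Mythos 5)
Your Steps 1 and 2 reproduce the paper's argument almost verbatim: normalize by a sharing pair $\{\alpha,\beta_0\}$, use Claim~\ref{claim-pre-r} and Proposition~\ref{prop-share}(i) to get an injective simplicial self-map of $\cal{D}(R)$, upgrade it to an automorphism by Proposition~\ref{prop-d}, restrict to the link $L_\alpha$, and descend through $\pi$ using the commuting square and the zero-dimensionality of vertex fibers (Lemma~\ref{lem-d-line}) to get that $\Phi$ sends edges of $\cal{F}(H_\alpha)$ to edges and is surjective on vertices. All of that is correct and is exactly what the paper does.

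The gap is in your last step, where you claim that a self-map of the Farey graph that is strict on adjacency, surjective on vertices and surjective on edges ``must be an automorphism, by triangle-by-triangle propagation, equivalently by viewing the map as a covering of the simply-connected modular tessellation.'' The covering/propagation argument needs local injectivity, i.e.\ that the two triangles of $\cal{F}$ sharing an edge $\{a,b\}$ are sent to the two \emph{distinct} triangles sharing $\{\Phi(a),\Phi(b)\}$. Strictness on adjacency does not give this: the two apex vertices of those triangles are $t_b^{\pm1}(a)$, which satisfy $i(t_b(a),t_b^{-1}(a))=2$ and hence are \emph{not} adjacent in $\cal{F}$, so nothing you have proved forbids them from having the same image (a fold). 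Edge-surjectivity does not locally exclude folds either, since a missed triangle could be hit from elsewhere. So injectivity of $\Phi|_{V(H_\alpha)}$ is not established. The paper closes exactly this point differently: Proposition~\ref{prop-share}(ii) gives $\phi(\pi^{-1}(d))\subset\pi^{-1}(\Phi(d))$ for each vertex $d$ of $\cal{F}$; since the fiber of $\pi$ over an edge is a bi-infinite line (Lemma~\ref{lem-d-line}) and $\phi$ restricts to an injective adjacency-preserving map between such lines, this containment is an equality, and injectivity of $\Phi$ on $V(H_\alpha)$ then follows from injectivity of $\phi$ because distinct vertices have disjoint fibers. You already have every ingredient needed for that argument; substituting it for the unproven Farey rigidity claim repairs the proof.
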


\begin{proof}
Choose an h-curve $\beta_{0}$ in $S$ such that $\{ \alpha, \beta_0\}$ is a sharing pair in $S$. 
To prove the claim, we may assume $\phi(\alpha)=\alpha$ and $\phi(\beta_0)=\beta_0$. 
Let $R$ denote the surface cut off by the BP $b(\alpha, \beta_0)$ from $S$ and containing $\alpha$ and $\beta_0$. 
Proposition \ref{prop-share} and Claim \ref{claim-pre-r} show that $\phi$ induces an injective simplicial map from $\cal{D}=\cal{D}(R)$ into itself, which is an automorphism of $\cal{D}$ by Proposition \ref{prop-d}. 
In particular, $\phi$ induces an automorphism of $L$, the link of $\alpha$ in $\cal{D}$. 
Put $\cal{F}=\cal{F}(H_{\alpha})$ and let $\pi \colon L\rightarrow \cal{F}$ be the simplicial map defined in Section \ref{subsec-fiber}.

We now show that for any two curves $b, c\in V(H_{\alpha})$ with $i(b, c)=1$, the equality $i(\Phi(b), \Phi(c))=1$ holds, that is, $\Phi$ preserves edges of $\cal{F}$. 
We choose an edge $\{ \beta, \gamma \}$ of $L$ with $\pi(\beta)=b$ and $\pi(\gamma)=c$.
Since $\phi$ induces an automorphism of $L$, the two vertices $\phi(\beta)$ and $\phi(\gamma)$ form an edge of $L$.
Since the fiber of $\pi$ over each vertex of $\cal{F}$ is zero-dimensional by Lemma \ref{lem-d-line}, the two vertices $\pi(\phi(\beta))$ and $\pi(\phi(\gamma))$ are distinct and thus form an edge of $\cal{F}$. 
Since we have $\pi(\phi(\beta))=\Phi(b)$ and $\pi(\phi(\gamma))=\Phi(c)$ by the definition of $\Phi$, we obtain $i(\Phi(b), \Phi(c))=1$.

Proposition \ref{prop-share} shows that for each vertex $d$ of $\cal{F}$, the inclusion $\phi(\pi^{-1}(d))\subset \pi^{-1}(\Phi(d))$ holds. 
Since the fiber of $\pi$ over an edge of $\cal{F}$ is a bi-infinite line by Lemma \ref{lem-d-line} and since $\phi$ is injective, the equality $\phi(\pi^{-1}(\{ b, c\}))= \pi^{-1}(\Phi(\{ b, c\}))$ holds for each edge $\{ b, c\}$ of $\cal{F}$.
We thus have $\phi(\pi^{-1}(d))=\pi^{-1}(\Phi(d))$ for each vertex $d$ of $\cal{F}$.
Injectivity of $\phi$ again implies that $\Phi$ induces an injective simplicial map from $\cal{F}$ into itself and thus an automorphism of $\cal{F}$.
\end{proof}

\begin{claim}\label{claim-bp-dis}
If $a$ and $b$ are non-separating curves in $S$ with $\{ a, b\}$ a BP in $S$, then we have $\Phi(a)\neq \Phi(b)$ and $i(\Phi(a), \Phi(b))=0$.
\end{claim}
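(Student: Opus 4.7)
The plan is to write $\{a,b\} = b(\alpha,\beta)$ for some sharing pair $\{\alpha,\beta\}$ in $S$ sharing a non-separating curve $c$, and let $R \cong S_{1,2}$ be the cut-off subsurface containing $\alpha$, $\beta$ and $c$. By Proposition \ref{prop-share}(ii), $\{\phi(\alpha),\phi(\beta)\}$ is again a sharing pair, sharing $\Phi(c)$; I set $\{a',b'\} = b(\phi(\alpha),\phi(\beta))$ and let $\phi(R)$ denote the corresponding cut-off subsurface. The aim is to show $\{\Phi(a),\Phi(b)\} = \{a',b'\}$, which at once yields both $\Phi(a) \neq \Phi(b)$ and $i(\Phi(a),\Phi(b)) = 0$.

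First, I exploit the fact that the arguments in the proof of Claim \ref{claim-handle-bij}, combined with Claim \ref{claim-pre-r}, Proposition \ref{prop-share}, and Proposition \ref{prop-d}, show that $\phi$ induces a simplicial isomorphism $\cal{D}(R) \to \cal{D}(\phi(R))$. Any sharing pair with both curves in $R$ thus maps to one with both curves in $\phi(R)$, whose associated BP is $\partial \phi(R) = \{a',b'\}$. Consequently, any non-separating curve $d$ in the interior of $R$, being the shared curve of some sharing pair inside $R$, has $\Phi(d)$ lying in the interior of $\phi(R)$.

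Next I collect disjointness constraints on $\Phi(a)$. For every non-separating curve $d$ in the interior of $R$, the pair $\{a,d\}$ is not a BP in $S$: cutting $S$ along $a$ keeps $Q$ attached to $R\setminus d$ through $b$, so $(S\setminus a)\setminus d$ is connected. Hence, by the non-BP case of disjointness preservation already established (via Lemma \ref{lem-dis-ns}), $i(\Phi(a),\Phi(d)) = 0$. Moreover, because $S\setminus H_{\alpha}$ has genus $g-1\geq 1$ and contains $a$ as a non-separating curve, one can find an h-curve $\gamma \subset S\setminus H_{\alpha}$ with $a \in V(H_{\gamma})$; Claim \ref{claim-handle-bij} then places $\Phi(a) \in V(H_{\phi(\gamma)})$ with $H_{\phi(\gamma)} \cap H_{\phi(\alpha)} = \emptyset$, whence $\Phi(a)$ is disjoint from $\phi(\alpha)$. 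The same holds for $\phi(\beta)$, and all analogous statements hold for $\Phi(b)$.

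Finally, I identify $\Phi(a)$ and $\Phi(b)$ as the boundary components of $\phi(R)$. Being a non-separating curve disjoint from $\phi(\alpha),\phi(\beta)$ and from every $\Phi(d)$ with $d$ non-separating in the interior of $R$, the curve $\Phi(a)$ must either lie in $\partial \phi(R) = \{a',b'\}$ or in the complementary subsurface $Q'$ of $\phi(R)$ in $S$. To rule out $\Phi(a)\in V(Q')$, I construct an explicit sharing pair $\{\mu,\nu\}$ for $a$ whose image under $\phi$ forces the shared curve onto $\partial \phi(R)$: choose $c_{a}\in V(S)$ with $i(c_{a},a)=1$ meeting $b$ minimally, take $\mu,\nu$ as boundaries of two nearby regular-neighborhood handles of $a\cup c_{a}$ related by $t_{a}$, and track the image pair using Proposition \ref{prop-share} together with the induced isomorphism on $\cal{D}(R)$ to pin down $\Phi(a)$ on $\partial\phi(R)$. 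This last localization is the main obstacle, as it requires a careful topological analysis of how sharing pairs for BP curves interact with the BP structure under $\phi$. Once it is achieved, $\{\Phi(a),\Phi(b)\}\subseteq \{a',b'\}$ holds, and $\Phi(a) = \Phi(b)$ is ruled out since otherwise $a$ and $b$ would be the common shared curve of a sharing pair, contradicting the BP structure of $\{a,b\}$.
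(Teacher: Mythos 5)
There is a genuine gap, and in fact two. First, your opening reduction---writing $\{a,b\}=b(\alpha,\beta)$ for some sharing pair $\{\alpha,\beta\}$---is not available for a general BP. By construction, $b(\alpha,\beta)$ always cuts off a subsurface homeomorphic to $S_{1,2}$ containing $\alpha$, $\beta$ and the shared curve, so only those BPs having an $S_{1,2}$ complementary component arise this way. Already in $S_{2,2}$ a BP can split the surface into pieces of type $S_{1,3}$ and $S_{0,3}$, and in $S_{5,0}$ into two pieces of type $S_{2,2}$; such BPs are not of the form $b(\alpha,\beta)$ for any sharing pair, so your entire setup (the subsurface $R$, the isomorphism $\cal{D}(R)\to\cal{D}(\phi(R))$, the identification of $\{a',b'\}$ as $\partial\phi(R)$) does not get off the ground for them. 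Second, even in the cases where the reduction applies, the decisive step---excluding $\Phi(a)\in V(Q')$ and pinning $\Phi(a)$ onto $\partial\phi(R)$---is only announced, not proved: you yourself call it ``the main obstacle'' and describe a construction to be ``tracked'' without carrying it out. Since everything before that point only yields disjointness constraints compatible with $\Phi(a)$ landing in the complement $Q'$, the claim is not established. (A smaller inaccuracy: a curve $d$ in the interior of $R$ that separates $R$ into two pieces each containing one boundary component of $R$ is non-separating in $S$, and $\{a,d\}$ \emph{is} then a BP, so your blanket assertion that $\{a,d\}$ is never a BP needs the hypothesis that $d$ is non-separating in $R$ itself.)

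For contrast, the paper's proof avoids any hypothesis on the topological type of $S\setminus(a\cup b)$: it picks a curve $c$ with $i(a,c)=i(b,c)=1$, forms the handles $H$ and $K$ filled by $a\cup c$ and $b\cup c$ with boundary h-curves $\alpha$ and $\beta$, and uses Claim \ref{claim-handle-bij} (the Farey-graph isomorphism on $V(H_\alpha)$) to see that $\Phi(a)$ and $\Phi(b)$ fill distinct handles with $\Phi(c)$, giving $\Phi(a)\neq\Phi(b)$. Disjointness is then obtained by a local analysis of the arcs of $\Phi(b)\cap\phi(H)$: the relations $\Phi(b)\bot\Phi(c)$ and $\Phi(b)\bot t_{\Phi(c)}^{\pm1}(\Phi(a))$ force $\Phi(b)\cap\phi(H)$ to be a single arc, which Lemma \ref{lem-dis-arc} shows is disjoint from $\Phi(a)$. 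If you want to salvage your route, you would need both to handle BPs with no $S_{1,2}$ side and to actually supply the localization argument; the paper's handle-and-arc argument is the more robust path.
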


\begin{proof}
When two non-separating curves $d$ and $e$ in $S$ satisfy the equality $i(d, e)=1$, let us write $d \bot e$ for simplicity. 

Choose a non-separating curve $c$ in $S$ with $a \bot c$ and $b \bot c$.
We denote by $H$ the handle filled by $a$ and $c$.
If $A$ and $C$ are representatives of $a$ and $c$, respectively, with $|A\cap C|=i(a, c)=1$, then $H$ is obtained as a regular neighborhood of $A\cup C$.
Let $\alpha$ denote the boundary curve of $H$.
Similarly, we denote by $K$ the handle filled by $b$ and $c$ and denote by $\beta$ the boundary curve of $K$.
Let $\phi(H)$ and $\phi(K)$ denote the handles cut off by $\phi(\alpha)$ and $\phi(\beta)$ from $S$, respectively.
The handle $\phi(H)$ is filled by $\Phi(a)$ and $\Phi(c)$ because we have $\Phi(a)\bot \Phi(c)$ by Claim \ref{claim-handle-bij}.
Similarly, the handle $\phi(K)$ is filled by $\Phi(b)$ and $\Phi(c)$ because we have $\Phi(b)\bot \Phi(c)$ by Claim \ref{claim-handle-bij}.
It follows from $\phi(\alpha)\neq \phi(\beta)$ that we have $\Phi(a)\neq \Phi(b)$.

We set
\[U=\{ \, d \in V(H) \mid d \bot c \, \} =\{ \, t_{c}^{n}(a)\mid n\in \mathbb{Z}\, \}.\]
By Claim \ref{claim-handle-bij}, we have
\[\Phi(U)=\{ \, d \in V(\phi(H))\mid d \bot \Phi(c)\, \}=\{ \, t_{\Phi(c)}^{n}(\Phi(a)) \mid n\in \mathbb{Z}\, \}.\]
The two equalities
\[\{ t_{c}^{\pm 1}(a)\}=\{ \, d \in U\mid d \bot a \, \},\quad \{ t_{\Phi(c)}^{\pm 1}(\Phi(a))\}=\{ \, e \in \Phi(U)\mid e \bot \Phi(a) \, \}\]
imply the equality $\{ \Phi(t_{c}^{\pm 1}(a))\}=\{ t_{\Phi(c)}^{\pm 1}(\Phi(a))\}$.
Claim \ref{claim-handle-bij} then implies
\[\Phi(a)\bot \Phi(c),\quad \Phi(b)\bot \Phi(c),\quad \Phi(b)\bot t_{\Phi(c)}^{\pm 1}(\Phi(a)),\]
where the third relation follows from $b \bot t_c^{\pm 1}(a)$. 
The first and second relations show that $\Phi(b)\cap \phi(H)$ consists of an essential simple arc $l$ in $\phi(H)$ intersecting $\Phi(c)$ once and essential simple arcs in $\phi(H)$ which are disjoint from $\Phi(c)$ and mutually isotopic. 
If there were a component $r$ of $\Phi(b)\cap \phi(H)$ disjoint from $\Phi(c)$, then $r$ would intersect $t_{\Phi(c)}^{\pm 1}(\Phi(a))$ once, respectively, because we have $\Phi(c)\bot t_{\Phi(c)}^{\pm 1}(\Phi(a))$. 
The third relation then implies that $l$ does not intersect $t_{\Phi(c)}^{\pm 1}(\Phi(a))$. 
This is impossible because a curve in $\phi(H)$ disjoint from $l$ uniquely exists up to isotopy. 
We thus proved that $\Phi(b)\cap \phi(H)$ consists of only $l$. 
Since $l$ intersects $\Phi(c)$ and $t_{\Phi(c)}^{\pm 1}(\Phi(a))$ once, respectively, 
Lemma \ref{lem-dis-arc} implies that $l$ is disjoint from $\Phi(a)$.
We therefore conclude that $\Phi(b)$ is disjoint from $\Phi(a)$.
\end{proof}

As discussed before Claim \ref{claim-pre-r}, Claim \ref{claim-bp-dis} completes the proof of Lemma \ref{lem-simp}.
\end{proof}

The following fact will be used in Section \ref{sec-30}.

\begin{lem}\label{lem-bp-bp}
In the notation of Lemma \ref{lem-simp}, the map $\Phi$ preserves BPs in $S$.
That is, if $\{ a, b\}$ is a BP in $S$, then so is $\{ \Phi(a), \Phi(b)\}$.
\end{lem}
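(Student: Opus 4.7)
The plan is to represent $\{a, b\}$ as the bounding BP of an explicit sharing pair and then transport the structure through $\phi$ using the graph $\cal{D}$ from Section \ref{sec-d}.

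The first step is to produce such a sharing pair. When one complementary component $R$ of $\{a, b\}$ is homeomorphic to $S_{1,2}$ — which is automatic for the application in Section \ref{sec-30}, since every BP in $S_{3,0}$ has both sides homeomorphic to $S_{1,2}$ — any edge $\{\alpha, \beta\}$ of $\cal{D}(R)$ provides a pair of h-curves of $S$ with $i(\alpha,\beta)=4$ sharing a core $c \in V(R)$. Checking the conditions of Definition \ref{defn-share} in this $R$ yields that $\{\alpha,\beta\}$ is a sharing pair in $S$, and directly from the definition of $b(\alpha,\beta)$ one obtains $b(\alpha,\beta) = \partial R = \{a, b\}$. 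Proposition \ref{prop-share} then gives that $\{\phi(\alpha), \phi(\beta)\}$ is a sharing pair for $\Phi(c)$, so that $b(\phi(\alpha), \phi(\beta))$ is a well-defined BP of $S$ bounding an $S_{1,2}$-region $R'$.

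The second step is to identify $\{\Phi(a), \Phi(b)\}$ with $b(\phi(\alpha), \phi(\beta))$. By Claim \ref{claim-pre-r} combined with Proposition \ref{prop-d} applied to the induced injective simplicial map on $\cal{D}(R)\cong\cal{D}(R')$, $\phi$ restricts to a bijection $V_s(R) \to V_s(R')$. A direct argument in $R=S_{1,2}$ shows that every non-separating curve of $R$ lies in $V(H_{\alpha'})$ for some $\alpha' \in V_s(R)$, so Claim \ref{claim-handle-bij} upgrades this to a bijection $\Phi \colon V(R) \to V(R')$. Since $a$ and $b$ are disjoint from every element of $V(R)$, simpliciality of $\Phi$ (Lemma \ref{lem-simp
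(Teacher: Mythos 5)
Your proposal breaks off before the final step, and even granting the intended ending it has two genuine gaps. First, the whole construction requires the BP $\{a,b\}$ to bound a subsurface homeomorphic to $S_{1,2}$, since only then can $\{a,b\}$ be realized as $b(\alpha,\beta)$ for a sharing pair $\{\alpha,\beta\}$. This is automatic in $S_{3,0}$, but Lemma \ref{lem-bp-bp} is stated in the notation of Lemma \ref{lem-simp}, i.e., for every $S_{g,p}$ with $g\geq 2$ and $\left|\chi(S)\right|\geq 4$; in $S_{2,2}$, for instance, a BP may split the surface into an $S_{1,3}$ and an $S_{0,3}$, so that no sharing pair has $\{a,b\}$ as its associated BP and your first step cannot begin. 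Second, the concluding identification is not forced by what you establish: knowing that $\Phi(a)$ and $\Phi(b)$ are disjoint from every element of $V(R')$, where $R'$ is the $S_{1,2}$ cut off by $b(\phi(\alpha),\phi(\beta))$, does not identify $\{\Phi(a),\Phi(b)\}$ with $\partial R'$, because a curve disjoint from all of $V(R')$ may simply lie in $S\setminus R'$ (in $S_{3,0}$ this complement is another $S_{1,2}$ with plenty of such curves). You cannot repair this by citing the uniqueness of the BP disjoint from the sharing pair $\{\phi(\alpha),\phi(\beta)\}$, since that uniqueness applies only to BPs and whether $\{\Phi(a),\Phi(b)\}$ is a BP is exactly what is being proved.

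The paper's proof is entirely different and avoids both issues. By Claim \ref{claim-bp-dis}, $\Phi(a)$ and $\Phi(b)$ are distinct and disjoint, so if $\{\Phi(a),\Phi(b)\}$ were not a BP, the surface $Q$ obtained by cutting along them would be connected and homeomorphic to $S_{g-2,p+4}$. Since $\{a,b\}$ is a BP, its complement has total genus $g-1$, so one can choose $g-1$ pairwise disjoint, non-isotopic h-curves disjoint from $a$ and $b$; their images under $\phi$ are then $g-1$ disjoint, non-isotopic h-curves of $Q$ (their handles avoid $\Phi(a)$ and $\Phi(b)$ because $\Phi(a)\in V(H_{\phi(\delta)})$ and $\Phi(b)\in V(H_{\phi(\epsilon)})$ for suitable h-curves $\delta,\epsilon$), which is impossible in a surface of genus $g-2$. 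This counting argument works uniformly for all BPs, with no case analysis on the topology of the complementary pieces.
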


\begin{proof}
Suppose that there exists a BP $\{ a, b\}$ in $S$ such that $\{ \Phi(a), \Phi(b)\}$ is not a BP in $S$.
By Claim \ref{claim-bp-dis}, the surface $Q$ obtained by cutting $S$ along $\Phi(a)$ and $\Phi(b)$ is homeomorphic to $S_{g-2, p+4}$.  
On the other hand, there exists a simplex $\sigma$ of $\calc_s(S)$ consisting of $g-1$ h-curves in $S$ disjoint from $a$ and $b$.
Choose h-curves $\delta$ and $\epsilon$ in $S$ with $a\in V(H_{\delta})$, $b\in V(H_{\epsilon})$ and $i(\delta, \sigma)=i(\epsilon, \sigma)=0$.
For each $\gamma \in \sigma$, both $\Phi(a)$ and $\Phi(b)$ are curves in the component of $S_{\phi(\gamma)}$ that is not a handle since we have $\Phi(a)\in V(H_{\phi(\delta)})$ and $\Phi(b)\in V(H_{\phi(\epsilon)})$.
It follows that $\phi(\gamma)$ is an h-curve in $Q$ for each $\gamma \in \sigma$.
This is a contradiction because any collection of pairwise non-isotopic and disjoint h-curves in $Q$ consists of at most $g-2$ curves.
\end{proof}

%%%%%%%%%%%%%%%%%%%%%%%%%%%%%%%%%%%%%%%%%%%%%%

\section{$S_{2, 2}$}\label{sec-22}

We put $S=S_{2, 2}$ and fix a superinjective map $\phi \colon \calc_s(S)\rightarrow \calc_s(S)$ throughout this section. 
We denote by $\Phi \colon \calc(S)\rightarrow \calc(S)$ the simplicial map extending $\phi$, constructed right after Proposition \ref{prop-share}. 
For each non-separating curve $c$ in $S$, let
\[\Phi_c\colon \lk(c)\cap \calc_s(S)\rightarrow \lk(\Phi(c))\cap \calc_s(S)\] 
be the simplicial map defined as the restriction of $\Phi$, where for each $\alpha \in V(S)$, we denote by $\lk(\alpha)$ the link of $\alpha$ in $\calc(S)$.
In Lemma \ref{lem-Phi-c-surj}, we will prove that $\Phi_c$ is surjective for each $c$. 
Once this lemma is shown, we can readily prove that $\Phi$ is injective and is therefore an automorphism of $\calc(S)$ by Theorem \ref{thm-sha} (see the proof of Theorem \ref{thm-22} for a precise argument). 
A large part of this section is thus devoted to proving surjectivity of $\Phi_c$.

We fix a non-separating curve $c$ in $S$ and may assume $\Phi(c)=c$ until Lemma \ref{lem-Phi-c-surj} to prove surjectivity of $\Phi_c$.
Let $\partial_1$ and $\partial_2$ denote the boundary components of $S_c$ corresponding to $c$.
We first introduce a simplicial graph associated to $c$.

\medskip

\noindent {\bf Graph $\cal{E}$.} We define the simplicial graph $\cal{E}$ as follows.
The set of vertices of $\cal{E}$, denoted by $V(\cal{E})$, is defined as the set of all elements of $V_s(S)$ corresponding to an h-curve $\alpha$ in $S$ such that $c$ is a curve in the handle cut off by $\alpha$ from $S$. 
Two vertices of $\cal{E}$ are connected by an edge of $\cal{E}$ if and only if the two h-curves corresponding to them form a sharing pair for $c$ in $S$.

\medskip

For each $\alpha \in V(\cal{E})$, we denote by $\lk_{\cal{E}}(\alpha)$ the link of $\alpha$ in $\cal{E}$ and denote by $V(\lk_{\cal{E}}(\alpha))$ the set of vertices of $\lk_{\cal{E}}(\alpha)$.

\begin{lem}\label{lem-p-conn}
The graph $\cal{E}$ is connected.
\end{lem}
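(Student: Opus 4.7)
The plan is to apply Putman's method from Lemma 2.1 of \cite{putman-conn}, following the template used in the proof of Lemma \ref{lem-d-conn}. Fix a concrete basepoint $\alpha_0 \in V(\cal{E})$ and let $G = \stab_{\pmod(S)}(c)$ denote the subgroup of $\pmod(S)$ preserving the isotopy class of $c$. Given any $\gamma \in V(\cal{E})$, I will write $\gamma = h_1 \cdots h_n \alpha_0$ with the $h_i$ lying in a prescribed generating set $T$ of $G$. The sequence
\[\alpha_0,\ h_1\alpha_0,\ h_1 h_2 \alpha_0,\ \ldots,\ h_1 \cdots h_n \alpha_0 = \gamma\]
will then be a walk in $\cal{E}$ once I verify: (a) $G$ acts transitively on $V(\cal{E})$; and (b) for each generator $t \in T$, either $t\alpha_0 = \alpha_0$ or $\{\alpha_0, t\alpha_0\}$ is an edge of $\cal{E}$.

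Transitivity in (a) is the change-of-coordinates principle: any two pairs $(\alpha, c)$ and $(\alpha', c)$ with $\alpha, \alpha' \in V(\cal{E})$ are topologically identical configurations in $S_{2,2}$ (an h-curve together with a non-separating curve lying in its handle), so there exists $f \in \pmod(S)$ with $f(\alpha) = \alpha'$ and $f(c) = c$. For the generating set $T$ in (b), I will take a finite set of Dehn twists obtained by lifting a standard generating set of $\pmod(S_c)$, where $S_c \cong S_{1,4}$, together with $t_c$ itself; each such generator can be represented as the Dehn twist about an explicit simple closed curve in $S$ that is disjoint from $c$.

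The hard part will be the geometric verification of (b): I need to choose the basepoint $\alpha_0$ and the supporting curves for the generators together in an explicit picture of $S_{2,2}$, so that most of the generators are supported in $S \setminus H_{\alpha_0}$ (and hence fix $\alpha_0$), while for each of the remaining generators $t$ one can directly read off from the picture that $H_{\alpha_0}$ and $H_{t\alpha_0}$ intersect in an annular neighborhood of $c$ with connected complement, confirming that $\{\alpha_0, t\alpha_0\}$ is a sharing pair for $c$. With a sufficiently symmetric choice of picture this should reduce to a short, finite case check, directly in parallel with the proof of Lemma \ref{lem-d-conn}.
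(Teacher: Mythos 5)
Your proposal is correct and follows essentially the same route as the paper: Putman's Lemma 2.1 trick, using change of coordinates for transitivity and a finite Dehn-twist generating set (supported off $c$) for which each generator either fixes the basepoint or carries it to a vertex forming a sharing pair for $c$ with it, the latter being checked in an explicit picture. The only cosmetic difference is that the paper first identifies $V(\cal{E})$ with the set of p-curves in $S_c$ cutting off a pair of pants containing $\partial_1$ and $\partial_2$ and then works with $\pmod(S_c)$ directly, rather than with the stabilizer of $c$ in $\pmod(S)$.
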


\begin{proof}
We note that $V(\cal{E})$ is naturally identified with the subset of $V(S_c)$ consisting of all elements corresponding to a p-curve in $S_c$ cutting off a pair of pants containing $\partial_1$ and $\partial_2$.
Let $\alpha$ be the curve in Figure \ref{fig-arc} (a).
We define $T$ as the set consisting of the Dehn twists about the curves in Figure \ref{fig-arc} (b) and their inverses.
The group $\pmod(S_c)$ is generated by $T$ (see \cite{gervais}).
Since for each $h\in T$, either $h\alpha =\alpha$ or $h\alpha$ and $\alpha$ are connected by an edge of $\cal{E}$ and since any two vertices of $\cal{E}$ are sent to each other by an element of $\pmod(S_c)$, connectivity of $\cal{E}$ can be proved as in the proof of Lemma \ref{lem-d-conn}.  
\end{proof}

We next introduce a set of arcs as follows.
 
\medskip

\noindent {\bf Set $A_{\alpha}$.} Pick $\alpha \in V(\cal{E})$ and let $\Sigma_{\alpha}$ denote the component of $S_{\alpha}$ that is not a handle. 
We define $A_{\alpha}$ to be the subset of $A(\Sigma_{\alpha})$ consisting of all elements whose representatives are non-separating in $\Sigma_{\alpha}$ and connect two distinct points of the boundary component of $\Sigma_{\alpha}$ corresponding to $\alpha$.

\medskip

For each edge $\{ \alpha, \beta \}$ of $\cal{E}$, we define an element $r_{\alpha}(\beta)$ of $A_{\alpha}$ as follows.
Let $b(\alpha, \beta)$ be the BP in $S$ associated with the sharing pair $\{ \alpha, \beta \}$ in $S$, defined right after Definition \ref{defn-share}.
Since the BP $b(\alpha, \beta)$ cuts off a pair of pants from $\Sigma_{\alpha}$, we have an essential simple arc in $\Sigma_{\alpha}$ connecting two distinct points of the boundary component of $\Sigma_{\alpha}$ corresponding to $\alpha$, which uniquely exists up to isotopy.
We denote by $r_{\alpha}(\beta)$ the isotopy class of that essential simple arc in $\Sigma_{\alpha}$.

\begin{figure}
\includegraphics[width=12.5cm]{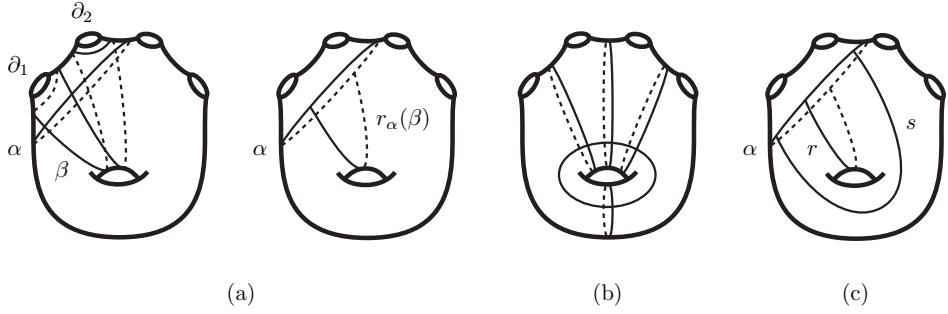}
\caption{The surface obtained by cutting $S$ along $c$ is described. The pair $\{ \alpha, \beta \}$ is an edge of the graph $\cal{E}$.}\label{fig-arc}
\end{figure}

The element $r_{\alpha}(\beta)$ can also be characterized in the following way.
Let $\{ \alpha, \beta \}$ be an edge of $\cal{E}$, and choose representatives $A$ and $B$ of $\alpha$ and $\beta$, respectively, with $|A\cap B|=i(\alpha, \beta)$.
We denote by $\Sigma_A$ the component of the surface obtained by cutting $S$ along $A$ that is not a handle.
The intersection $B\cap \Sigma_A$ consists of exactly two essential simple arcs in $\Sigma_A$ whose isotopy classes are equal to $r_{\alpha}(\beta)$.

\begin{lem}\label{lem-r-image}
For each curve $\alpha \in V(\cal{E})$ and each arc $r\in A_{\phi(\alpha)}$, there exists a curve $\beta \in V(\lk_{\cal{E}}(\alpha))$ satisfying the equality $r_{\phi(\alpha)}(\phi(\beta))=r$.
\end{lem}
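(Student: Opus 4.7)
The plan is to translate the arc‑realization statement into a subsurface‑realization statement in $S$, and then produce the required $\beta$ by invoking the surjectivity of $\phi$ on the graph $\cal{D}$ that was already established in the proof of Lemma~\ref{lem-simp}.

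First, given $r\in A_{\phi(\alpha)}$, I would read off the geometric data that $r$ determines in $S$. The essential boundary components of a regular neighborhood of $\phi(\alpha)\cup r$ in $S$ that are not isotopic to $\phi(\alpha)$ form a BP $b'=\{a_1',a_2'\}$, and $b'$ cuts off an $S_{1,2}$-subsurface $R'\subset S$ containing $\phi(\alpha)$ and $c$. By Lemma~\ref{lem-d-line} applied to $\cal{D}(R')$, there exists $\gamma\in V(\cal{D}(R'))$ adjacent to $\phi(\alpha)$ in $\cal{D}(R')$ with $c(\phi(\alpha),\gamma)=c$; such a $\gamma$ makes $\{\phi(\alpha),\gamma\}$ a sharing pair for $c$ with $b(\phi(\alpha),\gamma)=b'$, so that $r_{\phi(\alpha)}(\gamma)=r$. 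The problem therefore reduces to producing $\beta\in V(\lk_{\cal{E}}(\alpha))$ with $\phi(\beta)=\gamma$.

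Next I would try to produce $\beta$ via the sharing‑pair subsurface machinery. Pick any $\beta_0\in V(\lk_{\cal{E}}(\alpha))$ and let $R_0$ be the $S_{1,2}$-subsurface associated with $\{\alpha,\beta_0\}$. By Claim~\ref{claim-pre-r} and Proposition~\ref{prop-d}, $\phi$ induces a graph isomorphism $\cal{D}(R_0)\xrightarrow{\sim}\cal{D}(\phi(R_0))$. If we can arrange $\phi(R_0)=R'$, then $\gamma\in V(\cal{D}(R'))$ has a unique preimage $\beta\in V(\cal{D}(R_0))$, and by tracking the fiber over $c$ in Lemma~\ref{lem-d-line} this preimage automatically satisfies $c(\alpha,\beta)=c$ and lies in $V(\lk_{\cal{E}}(\alpha))$, completing the proof.

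The main obstacle is the last point: showing that as $\beta_0$ varies over $V(\lk_{\cal{E}}(\alpha))$, the subsurfaces $\phi(R_0)$ cover every $S_{1,2}$-subsurface of $S$ containing $\phi(\alpha)$ and $c$ (equivalently, that the induced map $\beta_0\mapsto r_\alpha(\phi(\beta_0))$ already hits every element of $A_{\phi(\alpha)}$). To overcome this I would exploit two independent pieces of structure. First, the $\pmod(S)$-stabilizer of the pair $\{\alpha,c\}$ acts transitively on $A_\alpha$, because the arcs in $A_\alpha$ are classified up to the mapping class group of $\Sigma_\alpha=S_{1,3}$ fixing $\alpha$ and $c$; this lets us move any initial sharing pair to any desired arc on the \emph{domain} side. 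Second, one expects $\phi$ to respect enough of this symmetry, via Lemma~\ref{lem-bp-bp} (preservation of BPs) and simpliciality of $\Phi$ (Lemma~\ref{lem-simp}), to transport the covering property to the image side. A cleaner alternative, which I would pursue in parallel, is to establish a direct analog of Proposition~\ref{prop-d} for the graph $\cal{E}$ itself (exploiting connectedness from Lemma~\ref{lem-p-conn} together with a Farey‑like description of $\lk_{\cal{E}}(\alpha)$ inherited from the fibered structure of Section~\ref{subsec-fiber}); surjectivity of $\phi|_{\cal{E}}$ on links would then imply the lemma at once. The technical heart, in either approach, is identifying the local combinatorial structure of $\lk_{\cal{E}}(\alpha)$ precisely enough to force an injective simplicial self‑map to be surjective.
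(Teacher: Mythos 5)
Your first two paragraphs are sound: given $r\in A_{\phi(\alpha)}$ one can indeed realize it as $r_{\phi(\alpha)}(\gamma)$ for some $\gamma$ forming a sharing pair for $c$ with $\phi(\alpha)$, and the problem does reduce to showing that such a $\gamma$ lies in the image of $\phi|_{\lk_{\cal{E}}(\alpha)}$. But that reduction is essentially a restatement of the lemma, and your third paragraph, where the actual content must be supplied, does not close the gap. Strategy (a) fails because transitivity of the stabilizer of $\{\alpha,c\}$ on $A_{\alpha}$ is a statement about the domain only: $\phi$ is merely a superinjective map, not equivariant with respect to any group action, so there is no mechanism for ``transporting the covering property to the image side.'' Strategy (b) is circular: surjectivity of the induced map $\lk_{\cal{E}}(\alpha)\rightarrow \lk_{\cal{E}}(\phi(\alpha))$ is precisely what Lemmas \ref{lem-r-image} and \ref{lem-r-eq} are jointly designed to establish (this is how Lemma \ref{lem-Phi-c-surj} is proved), so it cannot be assumed here.

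The ingredient you are missing is the induction already available at this stage: $\Sigma_{\alpha}$ is homeomorphic to $S_{1,3}$, and Corollary \ref{cor-13} guarantees that the restriction $\phi_{\alpha}\colon \calc_s(\Sigma_{\alpha})\rightarrow \calc_s(\Sigma_{\phi(\alpha)})$ is induced by a homeomorphism sending $\alpha$ to $\phi(\alpha)$; in particular it is bijective. The paper exploits this by encoding the arc $r$ combinatorially as the set $W$ of all separating curves of $\Sigma_{\phi(\alpha)}$ disjoint from $r$ (which determines $r$ uniquely within $A_{\phi(\alpha)}$), pulling $W$ back through the bijection $\phi_{\alpha}$ to obtain a unique arc $q\in A_{\alpha}$, realizing $q$ as $r_{\alpha}(\beta)$ for some $\beta \in V(\lk_{\cal{E}}(\alpha))$ by the same elementary geometric construction you carry out on the image side, and then observing that $\phi(\beta)$ is disjoint from every curve in $W$, which forces $r_{\phi(\alpha)}(\phi(\beta))=r$. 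Without invoking the $S_{1,3}$ case in some such form, surjectivity onto $A_{\phi(\alpha)}$ does not follow.
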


\begin{proof}
Let $\phi_{\alpha}\colon \calc_s(\Sigma_{\alpha})\rightarrow \calc_s(\Sigma_{\phi(\alpha)})$ be the map defined as the restriction of $\phi$. 
Corollary \ref{cor-13} shows that $\phi_{\alpha}$ is induced by a homeomorphism from $\Sigma_{\alpha}$ into $\Sigma_{\phi(\alpha)}$, which sends $\alpha$ to $\phi(\alpha)$. 
Let $W$ be the set of all elements of $V_s(\Sigma_{\phi(\alpha)})$ disjoint from $r$.
Note that $r$ is the only element of $A_{\phi(\alpha)}$ disjoint from all elements of $W$.
There exists a unique element $q\in A_{\alpha}$ such that $\phi^{-1}_{\alpha}(W)$ is equal to the set of all elements of $V_s(\Sigma_{\alpha})$ disjoint from $q$. 
Choose $\beta \in V(\lk_{\cal{E}}(\alpha))$ with $r_{\alpha}(\beta)=q$. 
Since each element of $\phi_{\alpha}^{-1}(W)$ is disjoint from $\beta$, each element of $W$ is disjoint from $\phi(\beta)$. 
We then have the equality $r_{\phi(\alpha)}(\phi(\beta))=r$.
\end{proof}

By Corollary \ref{cor-13}, for each $\alpha \in V(\cal{E})$, the restriction of $\phi$ to $\calc_s(\Sigma_{\alpha})$ is induced by a homeomorphism from $\Sigma_{\alpha}$ onto $\Sigma_{\phi(\alpha)}$ sending $\alpha$ to $\phi(\alpha)$.
We thus have the induced bijection $\Phi_{\alpha}\colon A_{\alpha}\rightarrow A_{\phi(\alpha)}$.

\begin{lem}\label{lem-r-eq}
Pick $\alpha \in V(\cal{E})$ and $r\in A_{\alpha}$, and set
\[B=\{ \, \beta \in V(\lk_{\cal{E}}(\alpha))\mid r_{\alpha}(\beta)=r\, \}.\]
Then we have the equality
\[\phi(B)=\{ \, \delta \in V(\lk_{\cal{E}}(\phi(\alpha)))\mid r_{\phi(\alpha)}(\delta)=\Phi_{\alpha}(r)\, \}.\]
\end{lem}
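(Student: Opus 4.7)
The plan is to establish the two inclusions $\phi(B) \subset B''$ and $B'' \subset \phi(B)$ separately, where $B''$ denotes the right-hand side $\{\delta \in V(\lk_{\cal{E}}(\phi(\alpha))) \mid r_{\phi(\alpha)}(\delta) = \Phi_\alpha(r)\}$. The forward inclusion adapts the proof of Lemma \ref{lem-r-image}; the reverse uses the graph $\cal{D}$ from Section \ref{sec-d} together with Proposition \ref{prop-d}.

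For the forward inclusion, I pick $\beta \in B$ and set $W = \{\delta \in V_s(\Sigma_{\phi(\alpha)}) \mid i(\delta, \Phi_\alpha(r)) = 0\}$. As in the proof of Lemma \ref{lem-r-image}, $\Phi_\alpha(r)$ is the unique element of $A_{\phi(\alpha)}$ disjoint from every element of $W$, so it suffices to show that $r_{\phi(\alpha)}(\phi(\beta))$ enjoys the same disjointness property. Given $\delta \in W$, the curve $\phi_\alpha^{-1}(\delta) \in V_s(\Sigma_\alpha)$ is disjoint from $r = r_\alpha(\beta)$, since $\Phi_\alpha$ is induced by the same homeomorphism $\Sigma_\alpha \to \Sigma_{\phi(\alpha)}$ that realizes $\phi_\alpha$ via Corollary \ref{cor-13}. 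Since $\beta \cap \Sigma_\alpha$ consists of arcs each isotopic to $r_\alpha(\beta)$, the curve $\phi_\alpha^{-1}(\delta)$ is then disjoint from $\beta$ in $S$; simpliciality of $\phi$ on $\calc_s(S)$ forces disjointness of $\delta = \phi(\phi_\alpha^{-1}(\delta))$ and $\phi(\beta)$, which in turn forces disjointness of $\delta$ and $r_{\phi(\alpha)}(\phi(\beta))$ by the same arc-counting observation in the target.

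For the reverse inclusion, I first apply Lemma \ref{lem-r-image} to $\Phi_\alpha(r)$ to produce some $\beta_0 \in V(\lk_{\cal{E}}(\alpha))$ with $r_{\phi(\alpha)}(\phi(\beta_0)) = \Phi_\alpha(r)$; the forward inclusion just proved yields $\beta_0 \in B$. Let $R_0$ denote the $S_{1,2}$ subsurface cut off from $S$ by the BP $b(\alpha, \beta_0)$. Because this BP depends only on the pair $(\alpha, r)$, the equality $b(\alpha, \beta) = b(\alpha, \beta_0)$ holds for every $\beta \in B$, so $B = \pi^{-1}(c) \cap L_\alpha$, where $L_\alpha$ is the link of $\alpha$ in $\cal{D}(R_0)$ and $\pi \colon L_\alpha \to \cal{F}(H_\alpha)$ is the map of Section \ref{subsec-fiber}; analogously, $B'' = \pi^{-1}(c) \cap L_{\phi(\alpha)}$ inside $\cal{D}(\phi(R_0))$, where $\phi(R_0)$ is cut off by $b(\phi(\alpha), \phi(\beta_0))$. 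Exactly as in the proof of Claim \ref{claim-handle-bij}, Claim \ref{claim-pre-r} and Proposition \ref{prop-share} show that $\phi$ induces an injective simplicial map $\phi_R \colon \cal{D}(R_0) \to \cal{D}(\phi(R_0))$, upgraded to an isomorphism by Proposition \ref{prop-d}. By the definition of $\Phi$ on non-separating curves, $\pi \circ \phi_R = \Phi \circ \pi$ on $L_\alpha$; since Claim \ref{claim-handle-bij} and the normalization $\Phi(c) = c$ imply that $\Phi$ restricts to a bijection on $\cal{F}(H_\alpha)$ fixing $c$, the isomorphism $\phi_R$ carries $\pi^{-1}(c) \cap L_\alpha$ bijectively onto $\pi^{-1}(c) \cap L_{\phi(\alpha)}$, giving $\phi(B) = B''$.

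The main technical point to watch is the verification that $\phi_R$ is a well-defined map between the two copies of $\cal{D}$: the vertex-level inclusion $\phi(V_s(R_0)) \subset V_s(\phi(R_0))$ is supplied by Claim \ref{claim-pre-r}, while edge preservation requires identifying edges of $\cal{D}(R_0)$ with sharing pairs in $S$ for the non-separating curves in the handle region of $R_0$, so that Proposition \ref{prop-share} can be applied.
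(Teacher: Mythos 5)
Your forward inclusion is the same as the paper's (which simply says it follows ``as in the proof of Lemma \ref{lem-r-image}''), and your write-up of it is correct. The reverse inclusion, however, takes a genuinely different route. The paper stays entirely inside the graph $\cal{E}$: it picks a second arc $s\in A_{\alpha}$ disjoint from $r$ whose endpoints alternate with those of $r$ along $\alpha$, shows via the argument of Lemma \ref{lem-d-line} that the full subgraph of $\cal{E}$ spanned by $B\cup \Gamma$ (with $\Gamma$ the fiber over $s$) is a bi-infinite line, notes that the subgraph spanned by the two target fibers is again a bi-infinite line, and concludes because an injective simplicial map between bi-infinite lines is onto. You instead transport the problem into the copy of $\cal{D}$ living in the subsurface $R_0$ cut off by $b(\alpha,\beta_0)$ and invoke Claim \ref{claim-pre-r}, Proposition \ref{prop-d} and Claim \ref{claim-handle-bij}; since the proof of Claim \ref{claim-handle-bij} already establishes $\phi(\pi^{-1}(d))=\pi^{-1}(\Phi(d))$ for every vertex $d$ of $\cal{F}(H_{\alpha})$, your argument in effect reduces Lemma \ref{lem-r-eq} to a statement proved in Section \ref{sec-const}, at the price of the dictionary between the two settings. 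Both arguments rest on the same rigidity mechanism (an injection into a bi-infinite line is forced to be onto), so neither is more general; yours is shorter given what is already available, while the paper's is self-contained within $\cal{E}$ and does not need the sharing-pair/$\cal{D}$-edge dictionary.

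One step you assert but do not justify: the identifications $B=\pi^{-1}(c)\cap L_{\alpha}$ and $B''=\pi^{-1}(c)\cap L_{\phi(\alpha)}$ require both containments, and your observation that $b(\alpha,\beta)$ depends only on $(\alpha,r)$ yields only $B\subset \pi^{-1}(c)\cap L_{\alpha}$. The reverse containment --- that every separating curve $\beta$ in $R_0$ with $i(\alpha,\beta)=4$ and $c(\alpha,\beta)=c$ really does form a sharing pair with $\alpha$ for $c$ in $S$ and satisfies $b(\alpha,\beta)=b(\alpha,\beta_0)$, hence $r_{\alpha}(\beta)=r$ --- is exactly what turns the surjectivity of $\phi_R$ on fibers into the desired $\phi(B)\supset B''$ rather than the weaker $\phi(\pi^{-1}(c)\cap L_{\alpha})\supset B''$. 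The fact is true: an Euler characteristic count shows that $R_0\setminus (H_A\cup H_B)$ is a disjoint union of two annuli, each joining a component of $b(\alpha,\beta)$ to a component of $\partial R_0$, so $b(\alpha,\beta)$ is isotopic to $\partial R_0$ and the complement of $H_A\cup H_B$ in $S$ is connected. This is close to the edge-versus-sharing-pair identification you flag at the end, but it needs to be stated and proved for the argument to close.
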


\begin{proof}
By using the set of all elements of $V_s(\Sigma_{\alpha})$ disjoint from $r$ as in the proof of Lemma \ref{lem-r-image}, we can show that the left hand side is contained in the right hand side in the desired equality.

Let $s$ be an element of $A_{\alpha}$ such that $s$ is disjoint and distinct from $r$, and the end points of disjoint representatives of $r$ and $s$ appear alternatively along $\alpha$ (see Figure \ref{fig-arc} (c)).
Let $h\in \mod(S_c)$ be the half twist about $\alpha$ exchanging $\partial_1$ and $\partial_2$ and being the identity on $\Sigma_{\alpha}$.
We set
\[\Gamma =\{ \, \gamma \in V(\lk_{\cal{E}}(\alpha))\mid r_{\alpha}(\gamma)=s\, \}.\]
Applying the argument in the proof of Lemma \ref{lem-d-line}, we have a numbering of elements, $B=\{ \beta_n\}_{n\in \mathbb{Z}}$ and $\Gamma =\{ \gamma_m\}_{m\in \mathbb{Z}}$, such that
\begin{itemize}
\item $h(\beta_n)=\beta_{n+1}$ and $h(\gamma_m)=\gamma_{m+1}$ for any $n, m\in \mathbb{Z}$; and
\item the full subgraph of $\cal{E}$ spanned by $B\cup \Gamma$ is the bi-infinite line with $\beta_n$ adjacent to $\gamma_n$ and $\gamma_{n+1}$ for each $n\in \mathbb{Z}$.
\end{itemize}
We also have the inclusions
\begin{align*}
\phi(B)&\subset \{ \, \delta \in V(\lk_{\cal{E}}(\phi(\alpha)))\mid r_{\phi(\alpha)}(\delta)=\Phi_{\alpha}(r)\, \},\\
\phi(\Gamma)&\subset \{ \, \epsilon \in V(\lk_{\cal{E}}(\phi(\alpha)))\mid r_{\phi(\alpha)}(\epsilon)=\Phi_{\alpha}(s)\, \}.
\end{align*}
Since the map $\Phi_{\alpha}\colon A_{\alpha}\rightarrow A_{\phi(\alpha)}$ is induced by a homeomorphism from $\Sigma_{\alpha}$ onto $\Sigma_{\phi(\alpha)}$ sending $\alpha$ to $\phi(\alpha)$, the two elements $\Phi_{\alpha}(r)$ and $\Phi_{\alpha}(s)$ are disjoint and distinct, and the end points of disjoint representatives of $\Phi_{\alpha}(r)$ and $\Phi_{\alpha}(s)$ appear alternatively along $\phi(\alpha)$. 
The argument in the proof of Lemma \ref{lem-d-line} shows that the subgraph of $\cal{E}$ spanned by the union of the right hand sides of the above two inclusions is thus a bi-infinite line. 
Injectivity of $\phi$ implies that both of the converse inclusions hold. 
The lemma follows.
\end{proof}

\begin{lem}\label{lem-Phi-c-surj}
If $\Phi(c)=c$, then the map
\[\Phi_c\colon \lk(c)\cap \calc_s(S)\rightarrow \lk(c)\cap \calc_s(S)\]
defined as the restriction of $\Phi$ is surjective.
\end{lem}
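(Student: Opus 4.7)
The plan is to combine the already-developed fibered analysis of the graph $\cal{E}$ with the surjectivity result for $\calc_s(S_{1,3})$ from Corollary \ref{cor-13}. First observe that, under the hypothesis $\Phi(c)=c$, Proposition \ref{prop-share} gives $\phi(V(\cal{E}))\subset V(\cal{E})$: for $\alpha\in V(\cal{E})$, pick $\alpha'$ with $\{\alpha,\alpha'\}$ a sharing pair for $c$; then $\{\phi(\alpha),\phi(\alpha')\}$ is a sharing pair for $\Phi(c)=c$, so $c\in V(H_{\phi(\alpha)})$ and $\phi(\alpha)\in V(\cal{E})$.

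The first main step is to prove that $\phi\colon V(\cal{E})\to V(\cal{E})$ is surjective. For each $\alpha \in V(\cal{E})$ and each $r'\in A_{\phi(\alpha)}$, set $r=\Phi_{\alpha}^{-1}(r')\in A_{\alpha}$, where $\Phi_{\alpha}\colon A_{\alpha}\to A_{\phi(\alpha)}$ is the bijection from Corollary \ref{cor-13}. Lemma \ref{lem-r-eq} then says that $\phi$ maps the fiber $\{\beta\in V(\lk_{\cal{E}}(\alpha)):r_{\alpha}(\beta)=r\}$ onto $\{\delta\in V(\lk_{\cal{E}}(\phi(\alpha))):r_{\phi(\alpha)}(\delta)=r'\}$. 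Since these fibers exhaust the link vertex sets as $r$ varies, $\phi$ maps $\lk_{\cal{E}}(\alpha)$ onto $\lk_{\cal{E}}(\phi(\alpha))$. Connectivity of $\cal{E}$ (Lemma \ref{lem-p-conn}) combined with a standard path-lifting argument then promotes this local surjectivity to global surjectivity on $V(\cal{E})$: given any target $\tilde\alpha$ and any $\alpha_0\in V(\cal{E})$, lift a path in $\cal{E}$ from $\phi(\alpha_0)$ to $\tilde\alpha$ edge by edge using surjectivity on links.

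The second step treats the remaining vertices of $\lk(c)\cap \calc_s(S)$. A short topological case analysis of separating curves in $S=S_{2,2}$ shows that such a vertex $\tilde\alpha\notin V(\cal{E})$ is either an h-curve whose handle avoids $c$, a p-curve, or a curve splitting $S$ into two copies of $S_{1,2}$. In each case one produces an h-curve $\tilde\beta\in V(\cal{E})$ disjoint from $\tilde\alpha$ with $\tilde\alpha\in V_s(\Sigma_{\tilde\beta})$, by choosing a simple closed curve transverse to $c$ in the component of $S\setminus \tilde\alpha$ containing $c$ and taking the boundary of a regular neighborhood of their union to be $\tilde\beta$. Using Step 1, pick $\beta\in V(\cal{E})$ with $\phi(\beta)=\tilde\beta$. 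By Corollary \ref{cor-13}, the restriction $\phi|_{\calc_s(\Sigma_{\beta})}\colon \calc_s(\Sigma_{\beta})\to \calc_s(\Sigma_{\tilde\beta})$ is induced by a homeomorphism of $S_{1,3}$ and hence surjective, giving $\alpha\in V_s(\Sigma_{\beta})$ with $\phi(\alpha)=\tilde\alpha$. Since $\alpha\subset \Sigma_{\beta}$ and $c\in V(H_{\beta})$, the curves $\alpha$ and $c$ are disjoint, so $\alpha$ is the desired preimage in $\lk(c)\cap \calc_s(S)$.

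The technical heart is Step 1; the fibered structure of $\lk_{\cal{E}}(\alpha)$ over $A_{\alpha}$ packaged in Lemmas \ref{lem-r-image} and \ref{lem-r-eq} is precisely the input that makes the argument work. Step 2 is then a bookkeeping reduction to the already-treated case of $\calc_s(S_{1,3})$.
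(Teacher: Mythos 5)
Your proposal is correct and follows essentially the same route as the paper: surjectivity of the induced map on $\cal{E}$ via the fibered link structure (Lemmas \ref{lem-r-image} and \ref{lem-r-eq}) together with connectivity of $\cal{E}$, and then a reduction of the remaining vertices of $\lk(c)\cap \calc_s(S)$ to the surjectivity result for $\calc_s(S_{1,3})$ by choosing a disjoint h-curve in $V(\cal{E})$. The extra details you supply (the classification of separating curves in $S_{2,2}$ and the explicit path-lifting) are only a more verbose rendering of the paper's argument.
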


\begin{proof}
Since $\phi$ preserves sharing pairs for $c$, $\phi$ induces a simplicial map $\phi_c\colon \cal{E}\rightarrow \cal{E}$. 
Lemmas \ref{lem-r-image} and \ref{lem-r-eq} show that for each $\alpha \in V(\cal{E})$, the map from $\lk_{\cal{E}}(\alpha)$ into $\lk_{\cal{E}}(\phi(\alpha))$ induced by $\phi_c$ is surjective. 
It follows from Lemma \ref{lem-p-conn} that the map $\phi_c\colon \cal{E}\rightarrow \cal{E}$ is a simplicial automorphism. 
In particular, the image of $\Phi_c$ contains all h-curves $\alpha$ in $S$ with $\Phi(c)=c\in V(H_{\alpha})$.

Let $\beta \in V(S_c)\cap V_s(S)$ be a curve which is not an h-curve in $S$ cutting off a handle containing $c$. 
There then exists an h-curve $\gamma$ in $S$ with $c\in V(H_{\gamma})$ and $i(\gamma, \beta)=0$.
The argument in the previous paragraph shows that there exists a curve $\alpha \in V(\cal{E})$ with $\Phi_c(\alpha)=\gamma$.
Theorem \ref{thm-13} implies that the map $\phi_{\alpha}\colon \calc_s(\Sigma_{\alpha})\rightarrow \calc_s(\Sigma_{\gamma})$ defined as the restriction of $\phi$ is surjective. 
In particular, the image of $\phi_{\alpha}$ contains $\beta$, and so does $\Phi_c$.
\end{proof}

Using the last lemma, we conclude the following:

\begin{thm}\label{thm-22}
Let $S=S_{2, 2}$ be a surface.
Then any superinjective map from $\calc_{s}(S)$ into itself is induced by an element of $\mod^*(S)$.
\end{thm}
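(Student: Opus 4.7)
The plan is to extend $\phi$ to a simplicial self-map $\Phi \colon \calc(S) \to \calc(S)$ via Lemma \ref{lem-simp}, deduce from Lemma \ref{lem-Phi-c-surj} that $\Phi$ is injective, invoke Theorem \ref{thm-sha} to conclude that $\Phi$ is an automorphism of $\calc(S)$, and apply Theorem \ref{thm-cc} (applicable here because $(g, p) = (2, 2) \neq (1, 2)$) to conclude that $\Phi$, and hence $\phi = \Phi|_{\calc_s(S)}$, is induced by an element of $\mod^*(S)$.

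To prove injectivity of $\Phi$, suppose $\Phi(a) = \Phi(b) =: c$ with $a \neq b$. The construction of $\Phi$ via $\phi$ and sharing pairs preserves the separating/non-separating dichotomy, so $a$ and $b$ have the same type; the separating case is immediate from injectivity of $\phi$, so assume both are non-separating. Applying Lemma \ref{lem-Phi-c-surj} to $a$ and to $b$, together with injectivity of $\phi$, yields $\lk(a) \cap V_s(S) = \lk(b) \cap V_s(S)$. If $i(a, b) = 0$ and $\{a, b\}$ is a BP, Lemma \ref{lem-bp-bp} would force $\{c, c\}$ to be a BP, which is absurd. If $i(a, b) = 0$ and $\{a, b\}$ is not a BP, Lemma \ref{lem-dis-ns} yields disjoint non-isotopic h-curves $\alpha, \beta$ with $a \in V(H_\alpha)$ and $b \in V(H_\beta)$; by Claim \ref{claim-handle-bij}, $c$ lies in both $V(H_{\phi(\alpha)})$ and $V(H_{\phi(\beta)})$, but disjoint non-isotopic h-curves have disjoint handles, hence disjoint vertex sets, a contradiction. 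If $i(a, b) = 1$, a regular neighborhood of $a \cup b$ is a handle $H_\alpha$ containing both $a, b \in V(H_\alpha)$, and Claim \ref{claim-handle-bij} makes $\Phi|_{V(H_\alpha)}$ a bijection, so $\Phi(a) \neq \Phi(b)$.

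The residual case $i(a, b) \geq 2$ is handled by producing a non-separating curve $d$ in $S_{2, 2}$ with $i(a, d) = i(b, d) = 1$, corresponding to a suitable arc in the cut surface $S_b \cong S_{1, 4}$ joining the two boundary components arising from $b$ and crossing $a$ exactly once. With such a $d$, take $H_\alpha$ to be a regular neighborhood of $a \cup d$ and $H_\beta$ a regular neighborhood of $b \cup d$, so that $a, d \in V(H_\alpha)$ and $b, d \in V(H_\beta)$. By Claim \ref{claim-handle-bij}, $\{c, \Phi(d)\} \subset V(H_{\phi(\alpha)}) \cap V(H_{\phi(\beta)})$ with $i(c, \Phi(d)) = 1$; since a handle is uniquely determined by any pair of non-separating curves meeting once, $H_{\phi(\alpha)} = H_{\phi(\beta)}$, so $\phi(\alpha) = \phi(\beta)$ and $\alpha = \beta$ by injectivity of $\phi$, whence $a, b \in V(H_\alpha)$, again contradicting Claim \ref{claim-handle-bij}. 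The main obstacle is the geometric existence of $d$; once available, the combinatorial deduction is immediate.
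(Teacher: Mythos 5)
Your overall architecture --- extend $\phi$ to $\Phi$ via Lemma \ref{lem-simp}, use Lemma \ref{lem-Phi-c-surj} to get injectivity of $\Phi$ on non-separating curves, then apply Theorems \ref{thm-sha} and \ref{thm-cc} --- is exactly the paper's. You even derive the paper's key intermediate fact: if $\Phi(a)=\Phi(b)$ for non-separating $a,b$, then $\lk(a)\cap V_s(S)=\lk(b)\cap V_s(S)$. The paper stops essentially there, since two distinct vertices of $\calc(S)$ cannot be disjoint from exactly the same separating curves. You instead set this equality aside and run a case analysis on $i(a,b)$, and the final case contains a genuine gap.

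The gap: for $i(a,b)\geq 2$ you assert the existence of a non-separating curve $d$ with $i(a,d)=i(b,d)=1$, and you rightly flag this as the main obstacle; it is in fact false in general. Take a handle $H\subset S_{2,2}$ bounded by an essential separating curve, identify $V(H)$ with primitive classes in $\mathbb{Z}^2$ up to sign, and let $a,b$ have slopes $(1,0)$ and $(2,5)$, so that $i(a,b)=5$. If $d$ were a curve in $S$ with $i(d,a)=i(d,b)=1$, put everything in minimal position; then $d\cap H$ is a disjoint union of essential arcs $\ell_1,\dots,\ell_m$, each with an associated slope $c(\ell_j)\in V(H)$ satisfying $i(\ell_j,e)=i(c(\ell_j),e)$ for every $e\in V(H)$ (cf.\ the arc--curve correspondence in Lemma \ref{lem-dis-arc}). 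The constraints $\sum_j i(c(\ell_j),a)=1=\sum_j i(c(\ell_j),b)$ force either a single arc whose slope $(r,s)$ satisfies $|s|=1$ and $|2s-5r|=1$ (no integer solution), or two arcs whose slopes force $i(a,b)=1$; either way a contradiction, so no such $d$ exists and your argument does not cover such pairs. The repair is cheap and uses only what you already have: since $i(a,b)\neq 0$, pick an h-curve $\gamma$ with $a\in V(H_\gamma)$; your link equality forces $i(b,\gamma)=0$, and since $b$ is non-separating and meets $a\subset H_\gamma$, it must lie in $V(H_\gamma)$, whence Claim \ref{claim-handle-bij} gives $\Phi(a)\neq\Phi(b)$ --- an argument that also subsumes your case $i(a,b)=1$. (Alternatively, simply conclude from the link equality as the paper does.) Your cases $i(a,b)=0$, both the BP and non-BP subcases, are correct.
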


\begin{proof}
Let $c$ and $d$ be non-separating curves in $S$ with $\Phi(c)=\Phi(d)$. 
Lemma \ref{lem-Phi-c-surj} shows that the two maps
\begin{align*}
\Phi_{c}&\colon \lk(c)\cap \calc_{s}(S)\rightarrow \lk(\Phi(c))\cap \calc_{s}(S),\\
\Phi_{d}&\colon \lk(d)\cap \calc_{s}(S)\rightarrow \lk(\Phi(d))\cap \calc_{s}(S)
\end{align*}
defined as the restriction of $\Phi$ are surjective, and their images are equal. 
Since these two maps are restrictions of the injective map $\phi$, we obtain the equality $c=d$. 
It follows that $\Phi$ is injective and is thus induced by an element of $\mod^*(S)$ by Theorems \ref{thm-cc} and \ref{thm-sha}.
\end{proof}

%%%%%%%%%%%%%%%%%%%%%%%%%%%%%%%%%%%%%%%%%%%%%%

\section{$S_{g, p}$ with $g\geq 2$ and $\left|\chi\right|\geq 5$}\label{sec-g2}

Let $S=S_{g, p}$ be a surface with $g\geq 2$ and $\left|\chi(S)\right|=2g+p-2\geq 5$. 
For each superinjective map $\phi \colon \calc_s(S)\rightarrow \calc_s(S)$, we prove that the simplicial map $\Phi \colon \calc(S)\rightarrow \calc(S)$ constructed right after Proposition \ref{prop-share} is induced by an element of $\mod^*(S)$, by induction on the lexicographic order of $(g, p)$.
The following lemma will be used to complete the inductive argument.
We mean by an {\it hp-curve} in $S$ a curve in $S$ which is either an h-curve or a p-curve in $S$.

\begin{lem}\label{lem-hp-conn}
Let $X$ be a surface with its genus at least two and $\left|\chi(X)\right|\geq 4$.
Then the full subcomplex of $\calc_s(X)$ spanned by all vertices corresponding to hp-curves in $X$ is connected.
\end{lem}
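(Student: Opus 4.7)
The plan is to mimic the applications of Putman's trick used in Lemmas 2.5, \ref{lem-d-conn}, and \ref{lem-p-conn}. Write $H$ and $P$ for the full subcomplexes of $\calc_s(X)$ spanned by h-curves and p-curves, respectively, so that the hp-subcomplex equals $H\cup P$, and note that $P$ is non-empty precisely when $p\geq 2$.

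The first step is to prove that $H$ is connected. Fix an h-curve $\alpha_0$ in $X$, and take a generating set $T$ of $\pmod(X)$ consisting of Dehn twists about the curves shown in Figure \ref{fig-mcg}(a). For each $t_c\in T$, either $c$ is disjoint from $\alpha_0$, in which case $t_c\alpha_0=\alpha_0$, or $t_c\alpha_0$ is isotopic to a curve contained in any regular neighborhood $N$ of $\alpha_0\cup c$; in the latter case, any h-curve of $X$ lying in $X\setminus N$ is disjoint from both $\alpha_0$ and $t_c\alpha_0$, producing a length-two path from $\alpha_0$ to $t_c\alpha_0$ in $H$. With $\alpha_0$ and the generators in $T$ chosen appropriately, the hypothesis $|\chi(X)|\geq 4$ guarantees that $X\setminus N$ contains a handle of $X$, furnishing the required intermediate h-curve. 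Putman's trick then shows that the orbit $\pmod(X)\cdot\alpha_0 = V(H)$ lies in a single component of $H$.

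When $p\geq 2$, the analogous argument with a basepoint p-curve $\beta_0$ shows that $P$ is connected as well. I would then bridge $H$ and $P$ as follows: since $g\geq 2$, choose an h-curve $\alpha$ in $X$ cutting off a handle $H_\alpha$; the complement $X\setminus H_\alpha\cong S_{g-1, p+1}$ has genus $g-1\geq 1$ and contains all $p\geq 2$ boundary components of $X$, so I can find a p-curve $\beta$ of $X$ in this complement by cutting off a pair of pants that contains two boundary components of $X$. Then $\{\alpha, \beta\}$ is an edge of the hp-subcomplex joining $H$ to $P$. When $p\leq 1$ there are no p-curves, the hp-subcomplex coincides with $H$, and nothing further is required.

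The principal obstacle is the intermediate-h-curve step in the first part: for each generator $t_c$ whose support $c$ intersects $\alpha_0$, one must verify that $X\setminus N(\alpha_0\cup c)$ has enough topological room to contain an h-curve of $X$. This is handled case by case, using the specific curves in Figure \ref{fig-mcg}(a) together with $|\chi(X)|\geq 4$ to ensure the needed complexity.
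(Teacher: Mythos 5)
Your overall architecture (Putman's trick applied to the $\pmod(X)$-orbit of a basepoint, with the Gervais generators) matches the paper's, but there is a genuine gap in your first step, and it is exactly at the point you flag as "the principal obstacle." You claim that, after choosing $\alpha_0$ and $T$ appropriately, for every generator $t_c$ with $i(c,\alpha_0)\neq 0$ the complement of $N(\alpha_0\cup c)$ contains a handle of $X$, so that the subcomplex $H$ spanned by h-curves alone is connected. This fails for every genus-two surface in the range of the lemma. Take $X=S_{2,p}$ with $p\geq 2$, let $a_1$, $a_2$ be the core curves of the two handles, let $\alpha_0=\partial H_{a_1}$, and let $c$ be the generator with $i(c,a_1)=i(c,a_2)=1$ (some such linking generator must intersect $\alpha_0$, since otherwise the generated group would fix $\alpha_0$). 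If $\beta$ were an h-curve disjoint from both $\alpha_0$ and $t_c\alpha_0$, then $H_{\beta}$ would be disjoint from $H_{\alpha_0}$, hence contained in $X\setminus H_{\alpha_0}\cong S_{1,p+1}$ and forced to contain $a_2$; but $H_{\beta}$ would also have to be disjoint from $H_{t_c\alpha_0}=t_c(H_{\alpha_0})\ni t_ca_1$, and $i(t_ca_1,a_2)\geq i(c,a_1)\,i(c,a_2)-i(a_1,a_2)=1$, so $t_ca_1$ meets $H_{\beta}$ --- a contradiction. So no intermediate h-curve exists for this generator, and no case-by-case choice of basepoint or generators rescues the argument in genus two; whether $H$ itself is even connected is left unresolved by your method.

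The paper's proof sidesteps this precisely by allowing the intermediate vertex to be an \emph{hp}-curve rather than an h-curve: in the configuration above, $X\setminus N(\alpha_0\cup c)$ contains a planar component carrying at least two components of $\partial X$ (note that $g=2$ together with $|\chi(X)|\geq 4$ forces $p\geq 2$), and a p-curve cutting off a pair of pants there is disjoint from both $\alpha_0$ and $t_c\alpha_0$. This is the whole reason the lemma is stated for the hp-subcomplex. A secondary, purely stylistic difference: your separate connectivity argument for $P$ and the bridging step are unnecessary, since every p-curve is disjoint from some h-curve (this is the one-line reduction the paper opens with); but that part of your proposal is harmless. The essential repair is to replace "intermediate h-curve" by "intermediate hp-curve" in your Putman step and to verify, for each generator meeting $\alpha_0$, the existence of such an hp-curve --- which is exactly the check the paper performs.
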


\begin{proof}
The idea to prove this lemma is based on Lemma 2.1 of \cite{putman-conn} as in Lemmas \ref{lem-d-conn} and \ref{lem-p-conn}.
It suffices to show that any two vertices of $\calc_s(X)$ corresponding to h-curves in $X$ can be connected by a path in $\calc_s(X)$ consisting of vertices corresponding to hp-curves in $X$ because for any p-curve $a$ in $X$, there exists an h-curve in $X$ disjoint from $a$.

\begin{figure}
\includegraphics[width=12cm]{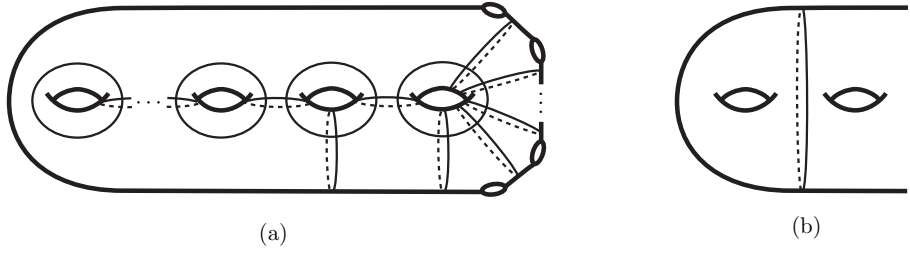}
\caption{If $S$ is a surface of positive genus, then $\pmod(S)$ is generated by the Dehn twists about the curves in (a) (see \cite{gervais}).}\label{fig-mcg}
\end{figure}

We define $T$ to be the set consisting of the Dehn twists about the curves in Figure \ref{fig-mcg} (a) and their inverses.
It is known that $\pmod(X)$ is generated by $T$ (see \cite{gervais}).
Let $\alpha$ denote the h-curve in Figure \ref{fig-mcg} (b).
One can check that for each $h\in T$, either $h\alpha =\alpha$ or there exists an hp-curve $\beta$ in $X$ with $i(h\alpha, \beta)=i(\alpha, \beta)=0$.
Since any two h-curves in $X$ are sent to each other by an element of $\pmod(X)$, the same argument as in Lemma \ref{lem-d-conn} concludes the lemma.
\end{proof}

\begin{thm}\label{thm-g2}
Let $S=S_{g, p}$ be a surface with $g\geq 2$ and $\left|\chi(S)\right|\geq 5$.
Then any superinjective map from $\calc_s(S)$ into itself is induced by an element of $\mod^*(S)$.
\end{thm}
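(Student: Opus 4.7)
The plan is to argue by induction on the lexicographic order of $(g,p)$ that $\phi$ is surjective, from which Theorem \ref{thm-cs-aut} will immediately deliver the conclusion. The base cases lying outside the scope of the present theorem but needed for the induction are already established: $(g,p)=(2,2)$ by Theorem \ref{thm-22}, $(g,p)=(3,0)$ by the forthcoming Section \ref{sec-30}, and $(g,p)=(1,p)$ with $p\geq 3$ by Theorems \ref{thm-13} and \ref{thm-1p4}.

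The core step would be a local-to-global argument centered on hp-curves. For any hp-curve $\alpha \in V_s(S)$, the link $\lk_s(\alpha)$ in $\calc_s(S)$ is naturally identified with $\calc_s(Q)$, where $Q\cong S_{g-1,p+1}$ if $\alpha$ is an h-curve and $Q\cong S_{g,p-1}$ if $\alpha$ is a p-curve. By Lemma \ref{lem-pre-top}, $\phi(\alpha)$ is an hp-curve of the same type, and $\phi$ restricts to a superinjective map between $\calc_s(Q)$ and $\calc_s(Q')$ for a surface $Q'$ homeomorphic to $Q$. A direct case-check confirms that for every $(g,p)$ with $g\geq 2$ and $|\chi(S)|\geq 5$, both of $(g-1,p+1)$ and $(g,p-1)$ either satisfy the hypotheses of Theorem \ref{thm-cs} and are strictly smaller than $(g,p)$ in lexicographic order --- so the inductive hypothesis applies --- or coincide with one of the base cases. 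In either situation the restriction is surjective, so $\phi|_{\lk_s(\alpha)}$ surjects onto $\lk_s(\phi(\alpha))$.

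To promote this local surjectivity to a global statement, I would exploit the subcomplex $\mathcal{H}$ of $\calc_s(S)$ spanned by hp-curves of $S$, which is connected by Lemma \ref{lem-hp-conn}. Fix any hp-curve $\alpha_0$; Lemma \ref{lem-pre-top} gives $\phi(\alpha_0)\in \phi(V(\mathcal{H}))$. Suppose inductively that some hp-curve $\alpha=\phi(\alpha')$ already lies in $\phi(V(\mathcal{H}))$ with $\alpha'$ an hp-curve. For each hp-curve $\beta$ adjacent to $\alpha$ in $\mathcal{H}$, the local surjectivity at $\alpha'$ produces $\beta'\in \lk_s(\alpha')$ with $\phi(\beta')=\beta$, and $\beta'$ is itself an hp-curve by Lemma \ref{lem-pre-top}. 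Connectivity of $\mathcal{H}$ then forces $\phi(V(\mathcal{H}))=V(\mathcal{H})$. For an arbitrary $\delta\in V_s(S)$, the hypotheses $g\geq 2$ and $|\chi(S)|\geq 5$ guarantee that some component of $S_\delta$ has genus $\geq 1$ and is not a handle, so one finds an hp-curve $\alpha$ of $S$ disjoint from $\delta$; writing $\alpha=\phi(\alpha')$ and applying local surjectivity at $\alpha'$ once more produces a preimage of $\delta$. Hence $\phi$ is surjective, and Theorem \ref{thm-cs-aut} completes the proof.

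The delicate point will be the case analysis in the local step, verifying that for every admissible $(g,p)$ with $|\chi(S)|\geq 5$, cutting $S$ along an h-curve or a p-curve yields a subsurface still covered by Theorem \ref{thm-cs} (and of strictly smaller lexicographic complexity, or else equal to a base case). Keeping the special roles of $S_{2,2}$ and $S_{3,0}$ straight in this bookkeeping is the principal obstacle; the broader machinery of Section \ref{sec-const} surrounding the extension $\Phi$ is not directly invoked here, although Lemma \ref{lem-pre-top} from that section is essential for tracking topological types under $\phi$.
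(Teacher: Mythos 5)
Your proposal is correct and follows essentially the same route as the paper: induction on the lexicographic order of $(g,p)$, surjectivity of $\phi$ on the link of each hp-curve via the inductive hypothesis together with the base cases $S_{1,p}$, $S_{2,2}$ and $S_{3,0}$, then global surjectivity from the connectivity of the hp-curve subcomplex (Lemma \ref{lem-hp-conn}) and the conclusion from Theorem \ref{thm-cs-aut}. You simply spell out the propagation along $\mathcal{H}$ and the final bookkeeping in more detail than the paper does.
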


\begin{proof}
If $\alpha$ is an h-curve in $S$, then the component of $S_{\alpha}$ that is not a handle is homeomorphic to $S_{g-1, p+1}$. 
If $\alpha$ is a p-curve in $S$, then $p\geq 2$ and the component of $S_{\alpha}$ that is not a pair of pants is homeomorphic to $S_{g, p-1}$. 
Since we assume $(g, p)\neq (2, 2), (3, 0)$, Theorems \ref{thm-1p4} and \ref{thm-22} and the hypothesis of the induction imply that the map $\phi_{\alpha}\colon \lk_s(\alpha)\rightarrow \lk_s(\phi(\alpha))$ defined as the restriction of $\phi$ is an isomorphism for each hp-curve $\alpha$ in $S$, where $\lk_s(\beta)$ denotes the link of $\beta$ in $\calc_s(S)$ for each $\beta \in V_s(S)$. 
Lemma \ref{lem-hp-conn} implies that $\phi$ is surjective. 
Applying Theorem \ref{thm-cs-aut}, we conclude the theorem.
\end{proof}

%%%%%%%%%%%%%%%%%%%%%%%%%%%%%%%%%%%%%%%%%%%%%%

\section{$S_{3, 0}$}\label{sec-30}

We put $S=S_{3, 0}$ throughout this section. 
This case is dealt with independently because the component of the surface obtained by cutting $S$ along an h-curve in $S$ is homeomorphic to $S_{2, 1}$ and inductive argument as in Section \ref{sec-g2} cannot be applied.
We first prove that any superinjective map $\psi$ from the Torelli complex $\calt(S)$ into itself is induced by an element of $\mod^*(S)$.

\begin{prop}\label{prop-30-t}
Any superinjective map $\psi \colon \calt(S)\rightarrow \calt(S)$ is induced by an element of $\mod^*(S)$.
\end{prop}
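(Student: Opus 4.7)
The plan is to extend $\psi$ from the Torelli complex $\calt(S)$ to a simplicial automorphism of the full curve complex $\calc(S)$, and then invoke Theorem \ref{thm-cc} to recognize this automorphism as induced by an element of $\mod^*(S)$.

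The first task is to show that $\psi$ preserves the two vertex types of $\calt(S)$. For $S=S_{3,0}$ every separating curve is an h-curve (cutting off an $S_{1,1}$), and every BP cuts $S$ into two copies of $S_{1,2}$. I distinguish the two types by the structure of their links inside $\calt(S)$: the full subcomplex of the link of an h-curve spanned by separating curves is identified with $\calc_s(S_{2,1})$, which by Theorem 7.1 of \cite{kls} is a disjoint union of countably many $\aleph_0$-regular trees and so has infinitely many connected components; the analogous subcomplex for a BP is the join of the two sets of h-curves lying in the two $S_{1,2}$-pieces, hence connected. A combinatorial invariant of this form is preserved by superinjective maps, forcing $\psi(V_s(S))\subset V_s(S)$ and $\psi(V_{bp}(S))\subset V_{bp}(S)$. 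Then $\phi := \psi|_{V_s(S)}$ is a superinjective map of $\calc_s(S)$, and since $g(S)=3\geq 2$ and $\left|\chi(S)\right|=4\geq 4$, the construction of Section \ref{sec-const} extends $\phi$ to a simplicial map $\Phi \colon \calc(S)\to \calc(S)$ which sends BPs to BPs by Lemma \ref{lem-bp-bp}. To match $\psi$ with $\Phi$ on $V_{bp}(S)$, I will use that every BP in $S_{3,0}$ arises as $b(\alpha,\beta)$ for some sharing pair $\{\alpha,\beta\}$ of h-curves; Proposition \ref{prop-share} gives that $\phi$ preserves sharing pairs, and the topological characterization of $b(\cdot,\cdot)$ together with simpliciality and injectivity of $\psi$ forces $\psi(\{c_1,c_2\}) = \{\Phi(c_1),\Phi(c_2)\}$ whenever $\{c_1,c_2\} = b(\alpha,\beta)$.

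The remaining and most delicate task is to prove that $\Phi$ is surjective on $V(S)$. Theorem \ref{thm-g2} does not apply since $\left|\chi(S)\right|=4<5$, and the inductive argument there breaks down because the link of an h-curve in $\calc_s(S)$ involves the pathological complex $\calc_s(S_{2,1})$. To bypass this, I will exploit the BP data carried by $\psi$ by adapting the auxiliary graph $\cal{E}$ from Section \ref{sec-22}: for each non-separating curve $c\in V(S)$, the vertices of the relevant graph are the h-curves $\alpha$ with $c\in V(H_\alpha)$, and the edges are sharing pairs for $c$. Using the fiber analysis via Lemma \ref{lem-d-line}, together with the surjectivity statements of Corollary \ref{cor-13} and Proposition \ref{prop-d} applied to the $S_{1,3}$- and $S_{1,2}$-pieces appearing in the relevant links, I expect to show that $\Phi$ induces an automorphism of this auxiliary graph, hence is surjective on non-separating curves. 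Combined with a parallel argument for h-curves, $\Phi$ becomes a bijection of $V(S)$, hence an automorphism of $\calc(S)$ by Theorem \ref{thm-sha}. Theorem \ref{thm-cc} then produces $\gamma_0\in \mod^*(S)$ inducing $\Phi$, and the BP-matching above implies that $\gamma_0$ also induces $\psi$ on the whole of $\calt(S)$. The main obstacle is this surjectivity step, where the BP data from $\psi$ must be used in an essential way to compensate for the failure of the inductive argument available for larger surfaces.
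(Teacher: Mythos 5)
Your setup — showing $\psi$ preserves the two vertex types, restricting to $\calc_s(S)$, extending to $\Phi\colon\calc(S)\rightarrow\calc(S)$, and matching $\psi$ with $\Phi$ on BPs via the fact that $b(\alpha,\beta)$ is the unique BP disjoint from a sharing pair — agrees with the paper's proof up to that point. (One caveat: your link-based argument for type preservation needs more care, since a superinjective map only identifies the link of $v$ with an \emph{induced} subcomplex of the link of $\psi(v)$, so connectivity-type invariants do not transfer automatically; the paper simply cites Lemma 3.7 of \cite{kida} for this.)

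The genuine gap is the surjectivity step, which you yourself flag as the main obstacle but do not resolve. Your plan is to rerun the Section \ref{sec-22} analysis of the auxiliary graph $\cal{E}$, invoking Corollary \ref{cor-13} ``applied to the $S_{1,3}$-pieces appearing in the relevant links.'' But for $S=S_{3,0}$ there are no such pieces: the complement of an h-curve $\alpha$ with $c\in V(H_\alpha)$ is homeomorphic to $S_{2,1}$, whose complex of separating curves is a disjoint union of trees and admits no rigidity statement analogous to Corollary \ref{cor-13}. This is exactly why the inductive scheme fails for $S_{3,0}$, and the machinery of Lemmas \ref{lem-r-image} and \ref{lem-r-eq} (which rests on the homeomorphism realizing $\phi_\alpha$ on $\Sigma_\alpha\cong S_{1,3}$) has no substitute here. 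The paper's actual route is different and is the key idea you are missing: for each non-separating curve $c$, cut along $c$ to obtain $S_c\cong S_{2,2}$ and use the BP-data of $\psi$ to define a map $\psi_c\colon\calc_s(S_c)\rightarrow\calc_s(S_{\Psi(c)})$ by $\psi_c(\alpha)=\psi(\alpha)$ when $\alpha$ is separating in $S$ and $\psi_c(\alpha)=\Psi(\alpha)$ when $\{\alpha,c\}$ is a BP (legitimate by the BP-matching claim). Superinjectivity of $\psi$ on $\calt(S)$ makes $\psi_c$ superinjective, so Theorem \ref{thm-22} — already established — shows $\psi_c$ is an isomorphism; injectivity of $\Psi$ on non-separating curves then follows by comparing the images of $\psi_c$ and $\psi_d$, and Theorems \ref{thm-sha} and \ref{thm-cc} finish the argument. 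Without this reduction to the $S_{2,2}$ case, your proposal does not close.
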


\begin{proof}
By Lemma 3.7 in \cite{kida}, we know that $\psi$ preserves vertices which correspond to separating curves and BPs in $S$, respectively. 
Applying the construction of a simplicial map from $\calc(S)$ into itself, discussed right after Proposition \ref{prop-share}, to the restriction of $\psi$ to $\calc_s(S)$, we obtain a simplicial map $\Psi \colon \calc(S)\rightarrow \calc(S)$.

\begin{claim}\label{claim-bp}
The equality
\[\{ \Psi(b_1), \Psi(b_2)\}=\psi(\{ b_1, b_2\})\]
holds for each BP $\{ b_1, b_2\}$ in $S$.
\end{claim}

\begin{proof}
Pick a BP $\{ b_1, b_2\}$ in $S$.
Let $\alpha_1$ and $\alpha_2$ be the curves in $S$ described in Figure \ref{fig-curves30}.
We note that $\{ \alpha_1, \alpha_2\}$ is a sharing pair in $S$ with $b(\alpha_1, \alpha_2)=\{ b_1, b_2\}$.
In general, for each sharing pair $\{ \alpha, \beta \}$ in $S$, $b(\alpha, \beta)$ is the only BP in $S$ disjoint from $\alpha$ and $\beta$.
By Lemma \ref{lem-bp-bp}, $\{ \Psi(b_1), \Psi(b_2)\}$ is a BP in $S$.
Since $\{ \Psi(b_1), \Psi(b_2)\}$ and $\psi(\{ b_1, b_2\})$ are BPs in $S$ disjoint from the sharing pair $\{ \psi(\alpha_1), \psi(\alpha_2)\}$, we have the desired equality. 
\end{proof}

\begin{figure}
\includegraphics[width=8cm]{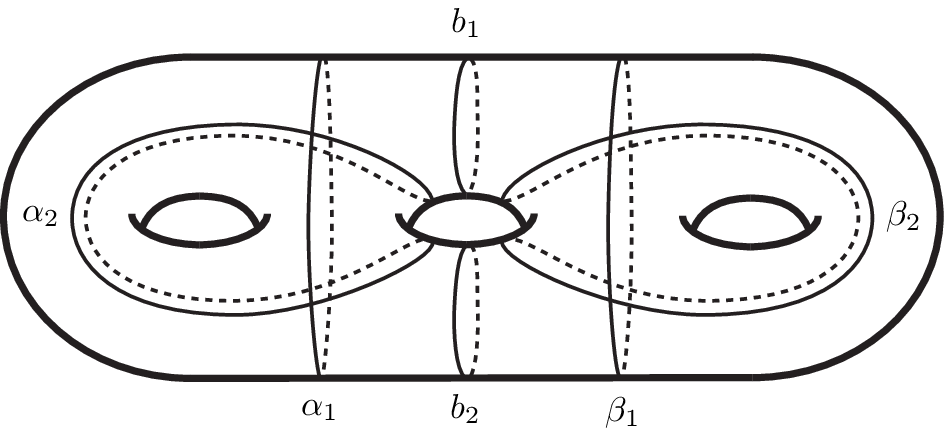}
\caption{}\label{fig-curves30}
\end{figure}

Let $c$ be a non-separating curve in $S$. 
We define a simplicial map $\psi_c\colon \calc_s(S_c)\rightarrow \calc_s(S_{\Psi(c)})$ as follows.
Pick $\alpha \in V_s(S_c)$.
If the curve $\alpha$ is separating in $S$, then we set $\psi_c(\alpha)=\psi(\alpha)$.
Otherwise $\{ \alpha, c\}$ is a BP in $S$ and we have the equality $\psi(\{ \alpha, c\})=\{ \Psi(\alpha), \Psi(c)\}$ by Claim \ref{claim-bp}.
In this case, we set $\psi_c(\alpha)=\Psi(\alpha)$.
Since $\psi \colon \calt(S)\rightarrow \calt(S)$ is superinjective, so is $\psi_c$. 
Theorem \ref{thm-22} shows that $\psi_c\colon \calc_s(S_c)\rightarrow \calc_s(S_{\Psi(c)})$ is an isomorphism.

If $c$ and $d$ are non-separating curves in $S$ with $\Psi(c)=\Psi(d)$, then the images of the two maps $\psi_c$ and $\psi_d$ are equal. 
Since $\psi$ is injective, the equality $\calc_s(S_c)=\calc_s(S_d)$ holds, and we thus have $c=d$. 
It follows that $\Psi$ is injective.
Theorems \ref{thm-cc} and \ref{thm-sha} show that $\Psi$ is induced by an element of $\mod^*(S)$.
\end{proof}

Let $\phi \colon \calc_s(S)\rightarrow \calc_s(S)$ be a superinjective map, and let $\Phi \colon \calc(S)\rightarrow \calc(S)$ be the simplicial map constructed right after Proposition \ref{prop-share}. 
In the rest of this section, we prove that $\Phi$ is an automorphism by using Proposition \ref{prop-30-t}. 
We note that $\Phi$ induces a simplicial map from $\calt(S)$ into itself by Lemma \ref{lem-bp-bp}. 
This induced map is also denoted by the same symbol $\Phi$.

\begin{lem}\label{lem-30-Phi-link}
Let $b$ be a BP in $S$, and let $R_1$ and $R_2$ denote the two components of $S_b$. 
We suppose that the equality $\Phi(b)=b$ holds and that for each $j=1, 2$, the inclusion
\[\Phi(\calc(R_j)\cap \calc_s(S))\subset \calc(R_j)\cap \calc_s(S)\]
holds. 
Then for each $j=1, 2$, the map
\[\Phi_j\colon \calc(R_j)\cap \calc_s(S)\rightarrow \calc(R_j)\cap \calc_s(S)\] defined as the restriction of $\Phi$ is surjective.
\end{lem}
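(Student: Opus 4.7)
The plan is to show that $\Phi_j$ induces an injective simplicial self-map of the graph $\cal{D}(R_j)$ from Section \ref{sec-d}, and then invoke Proposition \ref{prop-d}. Since $b$ is a bounding pair in $S_{3,0}$, an Euler-characteristic computation forces $R_1\cong R_2\cong S_{1,2}$, so the graph $\cal{D}(R_j)$ is defined. Its vertex set $V_s(R_j)$ coincides with $\calc(R_j)\cap \calc_s(S)$: every separating curve of $R_j=S_{1,2}$ splits $S$ into an $S_{1,1}$ handle lying inside $R_j$ and an $S_{2,1}$ containing $R_{3-j}$, so it is separating in $S$, and conversely any curve in $R_j$ that is separating in $S$ is separating in $R_j$. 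Thus $\Phi_j$ is well defined on $V(\cal{D}(R_j))$ and is injective.

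The core step is to show that $\Phi_j$ preserves edges of $\cal{D}(R_j)$. First I would establish the geometric content of an edge: if $\alpha,\beta\in V_s(R_j)$ satisfy $i(\alpha,\beta)=4$, then $\{\alpha,\beta\}$ is a sharing pair in $S$ (Definition \ref{defn-share}) with associated bounding pair $b(\alpha,\beta)=b$. The handles $H_\alpha,H_\beta$ are the $S_{1,1}$-sides of $\alpha,\beta$ in $R_j$; using Lemma \ref{lem-arc-disjoint} and the fact that $R_j$ cut along $l_\alpha$ and $l_\beta$ is an annulus, one identifies $H_\alpha\cap H_\beta$ as an annular neighborhood of $c(\alpha,\beta)$, verifies that $S\smallsetminus(H_\alpha\cup H_\beta)$ is connected (the pair-of-pants sides of $\alpha,\beta$ in $R_j$ glue across $b$ to $R_{3-j}$), and reads off $b$ as the essential non-$c$-isotopic boundary of a regular neighborhood of $\alpha\cup\beta$.

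Given this correspondence, Proposition \ref{prop-share} implies $\{\Phi(\alpha),\Phi(\beta)\}$ is a sharing pair. By Lemma \ref{lem-simp}, the curves of $\Phi(b)$ are disjoint from $\Phi(\alpha),\Phi(\beta)$, and by Lemma \ref{lem-bp-bp} they form a bounding pair; combined with the hypothesis $\Phi(b)=b$ and the uniqueness of a bounding pair disjoint from a sharing pair (recalled in the proof of Claim \ref{claim-bp}), this yields $b(\Phi(\alpha),\Phi(\beta))=b$. Since any two sharing pairs in $S$ are $\pmod(S)$-equivalent and hence have the common intersection number $4$, we obtain $i(\Phi(\alpha),\Phi(\beta))=4$. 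The hypothesis $\Phi(V_s(R_j))\subset V_s(R_j)$ then makes $\{\Phi(\alpha),\Phi(\beta)\}$ an edge of $\cal{D}(R_j)$, and Proposition \ref{prop-d} completes the proof.

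The main obstacle I anticipate is the correspondence in the second paragraph: reading the combinatorial condition $i(\alpha,\beta)=4$ inside $R_j$ as genuine sharing-pair data in the ambient $S$. Once this geometric dictionary is set up, the sharing-pair preservation from Section \ref{sec-const} and the rigidity of $\cal{D}$ from Section \ref{sec-d} assemble the argument with little further work.
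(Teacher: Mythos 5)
Your proposal is correct and follows essentially the same route as the paper: identify pairs of separating curves in $R_j\cong S_{1,2}$ with intersection number four as sharing pairs in $S$ whose associated BP is $b$, use Proposition \ref{prop-share} together with the hypothesis $\Phi(\calc(R_j)\cap\calc_s(S))\subset\calc(R_j)\cap\calc_s(S)$ to get an injective simplicial self-map of $\cal{D}(R_j)$, and conclude by Proposition \ref{prop-d}. The paper's proof is just a terser version of this; your added verification that $b(\Phi(\alpha),\Phi(\beta))=b$ is harmless but not needed, since edge-preservation only requires $i(\Phi(\alpha),\Phi(\beta))=4$ and containment in $R_j$.
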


\begin{proof}
For each $j=1, 2$, the map $\Phi_j$ preserves two separating curves in $R_j$ whose intersection number is equal to four since $\phi$ preserves sharing pairs in $S$. 
It follows that $\Phi_j$ induces an injective simplicial map from the graph $\cal{D}=\cal{D}(R_j)$, defined in Section \ref{sec-d}, into itself.
Proposition \ref{prop-d} then shows that $\Phi_j$ is surjective. 
\end{proof}

\begin{lem}
The simplicial map $\Phi \colon \calt(S)\rightarrow \calt(S)$ is superinjective.
\end{lem}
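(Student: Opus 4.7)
My plan is to verify superinjectivity of $\Phi$ on $\calt(S)=V_s(S)\sqcup V_{bp}(S)$ for $S=S_{3,0}$ by case analysis. When both vertices lie in $V_s(S)$, the conclusion is immediate since $\Phi$ restricts to the superinjective $\phi$ there. The cases involving a BP will be deduced from a single structural claim: for any BP $b=\{b_1,b_2\}$ in $S$, if $R_1, R_2$ denote the components of $S_b$ and $R'_1, R'_2$ the components of $S_{\{\Phi(b_1),\Phi(b_2)\}}$ (each homeomorphic to $S_{1,2}$ by Lemma \ref{lem-bp-bp}), then $\Phi$ restricts to a bijection $V(R_1)\sqcup V(R_2)\to V(R'_1)\sqcup V(R'_2)$ with $\Phi(V(R_j))=V(R'_{\sigma(j)})$ for some permutation $\sigma$ of $\{1,2\}$. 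Granting this, both remaining cases follow by contrapositive. If an h-curve $\alpha$ satisfies $i(\alpha,b)\neq 0$ but $\phi(\alpha)$ is disjoint from both $\Phi(b_i)$, then $\phi(\alpha)\in V_s(R'_k)$ for some $k$, so the bijection places $\alpha\in V_s(R_{\sigma^{-1}(k)})$, contradicting $i(\alpha,b)\neq 0$. If two BPs $u=\{a_1,a_2\},v=\{b_1,b_2\}$ satisfy $i(u,v)\neq 0$ while $\Phi(a_i)$ is disjoint from every $\Phi(b_j)$, then the BP condition on $\{\Phi(a_1),\Phi(a_2)\}$ forces both $\Phi(a_i)$ into a common $V(R'_k)$, and the bijection places $a_1,a_2\in V(R_{\sigma^{-1}(k)})$, again contradicting $i(u,v)\neq 0$.

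To prove the structural claim, I would first treat separating curves. Every $\alpha\in V_s(R_j)$ is an h-curve of $S$ whose handle lies in $R_j$. For $\alpha,\beta\in V_s(R_j)$ with $i(\alpha,\beta)=4$, the pair $\{\alpha,\beta\}$ is a sharing pair in $S$: the handles $H_\alpha,H_\beta\subset R_j$ overlap in an annulus, and $S\setminus(H_\alpha\cup H_\beta)$ is connected because it contains $R_{3-j}$ attached along $b_1,b_2$. Proposition \ref{prop-share} then makes $\{\phi(\alpha),\phi(\beta)\}$ a sharing pair, so $H_{\phi(\alpha)}\cap H_{\phi(\beta)}$ is nonempty, ruling out the possibility that $\phi(\alpha)$ and $\phi(\beta)$ lie on different sides of $\{\Phi(b_1),\Phi(b_2)\}$. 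Connectedness of $\cal{D}(R_j)$ (Lemma \ref{lem-d-conn}) upgrades this to $\phi(V_s(R_j))\subset V_s(R'_{\sigma(j)})$ for a well-defined $\sigma(j)$, and the resulting injective simplicial map $\cal{D}(R_j)\to \cal{D}(R'_{\sigma(j)})$ is surjective by Proposition \ref{prop-d}. For non-separating curves, each $a\in V(R_j)$ lies in the handle of some $\alpha\in V_s(R_j)$ (taking the boundary of a regular neighborhood of $a$ and a dual curve in $R_j$), so Claim \ref{claim-handle-bij} provides a bijection $V(H_\alpha)\to V(H_{\phi(\alpha)})\subset V(R'_{\sigma(j)})$; using this in both directions and combining with the separating-curve bijection gives the full structural claim. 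Injectivity of $\Phi$ on non-separating curves, needed in the BP-BP case, reduces to the observation that $V(H_{\phi(\alpha)})\cap V(H_{\phi(\gamma)})$ for distinct h-curves is either empty (for disjoint handles) or consists of the single shared core (for a sharing pair, by Proposition \ref{prop-share}), so any coincidence $\Phi(a)=\Phi(c)$ with $a\in V(H_\alpha),c\in V(H_\gamma)$ is forced to $a=c$ by bijectivity on each handle.

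The main obstacle I anticipate is the no-mixing step of the structural claim, namely ruling out that $\phi$ could send some curves of $V_s(R_j)$ into $V_s(R'_1)$ and others into $V_s(R'_2)$. This is exactly where the sharing-pair preservation of Proposition \ref{prop-share} interacts with the geometric observation about $i(\alpha,\beta)=4$ pairs in $V_s(R_j)$ being sharing pairs in $S$, and with connectedness of $\cal{D}(R_j)$. The remaining steps—applying Proposition \ref{prop-d} for surjectivity on the $\cal{D}$-graphs, transferring to non-separating curves via Claim \ref{claim-handle-bij}, and running the final contrapositive arguments—are comparatively routine given the preceding sections.
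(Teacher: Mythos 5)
Your reduction of the two mixed cases to the structural claim breaks down at the step ``the bijection places $a_1,a_2\in V(R_{\sigma^{-1}(k)})$'' in the BP--BP case. The structural claim tells you that $\Phi$ maps $V(R_1)\sqcup V(R_2)$ bijectively onto $V(R'_1)\sqcup V(R'_2)$; to deduce from $\Phi(a_i)\in V(R'_k)$ that $a_i$ itself lies in $V(R_{\sigma^{-1}(k)})$, you must rule out that some curve \emph{outside} $V(R_1)\sqcup V(R_2)$ --- which is exactly where $a_i$ sits if the intersection $i(u,v)\neq 0$ is carried by $a_i$ --- has the same image as a curve inside. In the separating-versus-BP case this is harmless because $\phi$ is injective on $V_s(S)$, but for non-separating curves it requires injectivity of $\Phi$ on $V(S)\setminus V_s(S)$, which is essentially what the whole section is trying to establish and cannot be assumed. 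Your proposed justification of that injectivity --- that for distinct h-curves $\alpha,\gamma$ the set $V(H_{\phi(\alpha)})\cap V(H_{\phi(\gamma)})$ is either empty or a single shared core --- only covers the cases where $H_{\alpha}$ and $H_{\gamma}$ are disjoint or form a sharing pair; two distinct h-curves in $S_{3,0}$ need not be in either position (their intersection number can be large), and even when $V(H_{\phi(\alpha)})\cap V(H_{\phi(\gamma)})$ is a single curve you would still need to know that its unique $\Phi$-preimages in $V(H_{\alpha})$ and in $V(H_{\gamma})$ coincide, which is another instance of the very injectivity being proved.

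The paper avoids this issue entirely. For the separating-versus-BP case it argues directly with the four curves of Figure \ref{fig-curves30}: if $i(a,b)\neq 0$ then $a$ meets some $\alpha_j$ and some $\beta_k$ lying on opposite sides of $b$, superinjectivity of $\phi$ transfers these intersections, and since $\phi(\alpha_j)$ and $\phi(\beta_k)$ lie in distinct components of $S_{\Phi(b)}$ this forces $i(\Phi(a),\Phi(b))\neq 0$. For the BP--BP case it proves only injectivity of $\Phi$ on $V_{bp}(S)$ --- if $\Phi(b)=\Phi(c)$ then by Lemma \ref{lem-30-Phi-link} the restrictions $\Phi_b$ and $\Phi_c$ surject onto the same set of separating curves, and injectivity of $\phi$ on $V_s(S)$ forces $b=c$ --- and then invokes the special fact that in $S_{3,0}$ any two distinct BPs intersect. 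Your structural claim is in substance Claim \ref{claim-pre-r} together with Lemma \ref{lem-30-Phi-link} (your no-mixing argument via sharing pairs and connectivity of $\cal{D}(R_j)$ is a reasonable alternative to the paper's), and your separating-versus-BP case goes through once you state explicitly that the final contradiction uses injectivity of $\phi$ on $V_s(S)$; but to close the BP--BP case you should either prove injectivity of $\Phi$ on the set of BPs as the paper does and combine it with the genus-three intersection fact, or supply a genuine proof of injectivity of $\Phi$ on all non-separating curves.
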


\begin{proof}
We first prove that if $a$ is a separating curve in $S$ and $b=\{ b_{1}, b_{2}\}$ is a BP in $S$ with $i(a, b)\neq 0$, then $i(\Phi(a), \Phi(b))\neq 0$. 
Choose separating curves $\alpha_{1}$, $\alpha_{2}$, $\beta_{1}$ and $\beta_{2}$ in $S$ as described in Figure \ref{fig-curves30}.
It follows from $i(a, b)\neq 0$ that there exist $j, k\in \{ 1, 2\}$ with $i(a, \alpha_{j})\neq 0$ and $i(a, \beta_{k})\neq 0$.
Superinjectivity of $\phi$ implies $i(\phi(a), \phi(\alpha_j))\neq 0$ and $i(\phi(a), \phi(\beta_k))\neq 0$.
Since $\phi(\alpha_j)$ and $\phi(\beta_k)$ are curves in distinct components of $S_{\Phi(b)}$, we have $i(\Phi(a), \Phi(b))\neq 0$.

We next prove that $\Phi$ is injective on $V_{bp}(S)$, the set of vertices of $\calt(S)$ corresponding to BPs in $S$. 
Let $b$ and $c$ be BPs in $S$ with $\Phi(b)=\Phi(c)$. 
Lemma \ref{lem-30-Phi-link} shows that both of the maps
\begin{align*}
\Phi_b&\colon \lk_t(b)\cap \calc_s(S)\rightarrow \lk_t(\Phi(b))\cap \calc_s(S),\\
\Phi_c&\colon \lk_t(c)\cap \calc_s(S)\rightarrow \lk_t(\Phi(c))\cap \calc_s(S)
\end{align*}
defined as the restriction of $\Phi$ are surjective, where $\lk_t(d)$ denotes the link of $d$ in $\calt(S)$ for each BP $d$ in $S$. 
The images of $\Phi_b$ and $\Phi_c$ are then equal. 
Since the map $\phi \colon \calc_s(S)\rightarrow \calc_s(S)$ is injective, we obtain the equality $b=c$.

Note that for any $b, c\in V_{bp}(S)$, we have $b\neq c$ if and only if $i(b, c)\neq 0$.
Injectivity of $\Phi$ on $V_{bp}(S)$ implies $i(\Phi(b), \Phi(c))\neq 0$ for any $b, c\in V_{bp}(S)$ with $i(b, c)\neq 0$. 
The lemma then follows.     
\end{proof}

The last lemma and Proposition \ref{prop-30-t} imply the following:

\begin{thm}
Let $S=S_{3, 0}$ be a surface.
Then any superinjective map from $\calc_{s}(S)$ into itself is induced by an element of $\mod^*(S)$.
\end{thm}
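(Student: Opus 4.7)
The plan is to reduce the statement to Proposition~\ref{prop-30-t} via the extension $\Phi$ constructed in Section~\ref{sec-const}. Given a superinjective map $\phi \colon \calc_s(S)\rightarrow \calc_s(S)$, I would first apply that construction to obtain a map $\Phi \colon V(S)\rightarrow V(S)$ agreeing with $\phi$ on $V_s(S)$. By Lemma~\ref{lem-simp}, $\Phi$ is simplicial as a self-map of $\calc(S)$, and by Lemma~\ref{lem-bp-bp} it sends BPs to BPs. Consequently $\Phi$ descends to a well-defined simplicial self-map of $\calt(S)$, which we continue to denote by $\Phi$.

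The lemma immediately preceding the theorem then tells us that this induced map $\Phi \colon \calt(S)\rightarrow \calt(S)$ is superinjective. Applying Proposition~\ref{prop-30-t}, we obtain an element $f\in \mod^*(S)$ whose induced action on $\calt(S)$ coincides with $\Phi$. Since $V_s(S)$ is a subset of the vertex set of $\calt(S)$ and $\Phi$ restricts to $\phi$ on $V_s(S)$ by construction, the same element $f$ induces $\phi$ on $\calc_s(S)$, completing the proof.

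There is really no new obstacle in the theorem itself: all the substantive effort has been absorbed into Lemmas~\ref{lem-simp}, \ref{lem-bp-bp}, the superinjectivity lemma just before the statement, and Proposition~\ref{prop-30-t}, and the theorem is a short assembly of these. The only point worth stating carefully is the compatibility $\Phi|_{V_s(S)}=\phi$, which is built into the definition of $\Phi$ right after Proposition~\ref{prop-share}, so that the element $f$ produced from the Torelli-complex version does indeed realize the original map $\phi$ on the complex of separating curves.
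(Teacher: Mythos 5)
Your proposal is correct and follows exactly the paper's own route: the paper states the theorem as an immediate consequence of the superinjectivity lemma for $\Phi\colon\calt(S)\rightarrow\calt(S)$ together with Proposition \ref{prop-30-t}, which is precisely the assembly you describe. Your explicit remark that $\Phi|_{V_s(S)}=\phi$ by construction is the right (and only) compatibility point to check.
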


%%%%%%%%%%%%%%%%%%%%%%%%%%%%%%%%%%%%%%%%%%%%%

\section{Proof of Theorem \ref{thm-t}}\label{sec-tor}

Let $S$ be the surface in Theorem \ref{thm-t}, and let $\phi \colon \calt(S)\rightarrow \calt(S)$ be a superinjective map.
We now prove that $\phi$ is induced by an element of $\mod^*(S)$.
It is shown in Lemma 3.7 and Proposition 3.16 of \cite{kida} that $\phi$ preserves vertices corresponding to separating curves and BPs in $S$, respectively. 
Applying Theorem \ref{thm-cs} (i) to the restriction of $\phi$ to $\calc_s(S)$, we can find $\gamma \in \mod^*(S)$ such that the equality $\phi(a)=\gamma a$ holds for any $a\in V_s(S)$.

We define a simplicial map $\phi_0\colon \calt(S)\rightarrow \calt(S)$ by setting $\phi_0(a)=\gamma^{-1}\phi(a)$ for each vertex $a$ of $\calt(S)$.
For each BP $b$ in $S$, one can find a collection $F$ of finitely many separating curves in $S$ such that $b$ is the only BP in $S$ disjoint from any curve in $F$ (see Figure \ref{fig-curves30} for example).
Since $\phi_0$ is the identity on $F$, it also fixes $b$.
It follows that $\phi_0$ is the identity and that $\phi$ is induced by $\gamma$.

We have proved assertion (i) of Theorem \ref{thm-t}.
We omit the proof of assertion (ii) of Theorem \ref{thm-t} because assertion (ii) can be derived from assertion (i) along the argument in Section 5 of \cite{bm} and Section 6.3 of \cite{kida}.

%%%%%%%%%%%%%%%%%%%%%%%%%%%%%%%%%%%%%%%%%%%%%


\begin{thebibliography}{99}

\bibitem{be-m}J. Behrstock and D. Margalit, Curve complexes and finite index subgroups of mapping class groups, {\it Geom. Dedicata} {\bf 118} (2006), 71--85.

\bibitem{bm-ar}R. W. Bell and D. Margalit, Injections of Artin groups, {\it Comment. Math. Helv.} {\bf 82} (2007), 725--751.

\bibitem{bm}T. Brendle and D. Margalit, Commensurations of the Johnson kernel, {\it Geom. Topol.} {\bf 8} (2004), 1361--1384.

\bibitem{bm-add}T. Brendle and D. Margalit, Addendum to: Commensurations of the Johnson kernel, {\it Geom. Topol.} {\bf 12} (2008), 97--101.

\bibitem{farb-ivanov}B. Farb and N. V. Ivanov, The Torelli geometry and its applications: research announcement, {\it Math. Res. Lett.} {\bf 12} (2005), 293--301.

\bibitem{flp}A. Fathi, F. Laudenbach and V. Po\'enaru, {\it Travaux de Thurston sur les surfaces.} S\'eminaire Orsay, Ast\'erisque, 66--67. Soc. Math. France, Paris, 1979.

\bibitem{gervais}S. Gervais, A finite presentation of the mapping class group of a punctured surface, {\it Topology} {\bf 40} (2001), 703--725. 

\bibitem{harvey}W. J. Harvey, Boundary structure of the modular group, in {\it Riemann surfaces and related topics: Proceedings of the 1978 Stony Brook Conference}, 245--251, Ann. of Math. Stud., 97, Princeton Univ.\ Press, Princeton, N.J., 1981.

\bibitem{irmak1}E. Irmak, Superinjective simplicial maps of complexes of curves and injective homomorphisms of subgroups of mapping class groups, {\it Topology} {\bf 43} (2004), 513--541.

\bibitem{irmak2}E. Irmak, Superinjective simplicial maps of complexes of curves and injective homomorphisms of subgroups of mapping class groups II, {\it Topology Appl.} {\bf 153} (2006), 1309--1340.

\bibitem{irmak-ns}E. Irmak, Complexes of nonseparating curves and mapping class groups, {\it Michigan Math. J.} {\bf 54} (2006), 81--110.

\bibitem{iva-aut}N. V. Ivanov, Automorphisms of complexes of curves and of Teichm\"uller spaces, {\it Int. Math. Res. Not.} {\bf 1997}, no. 14, 651--666. 

\bibitem{johnson}D. Johnson, Homeomorphisms of a surface which act trivially on homology, {\it Proc. Amer. Math. Soc.} {\bf 75} (1979), 119--125.

\bibitem{kls}R. P. Kent IV, C. J. Leininger and S. Schleimer, Trees and mapping class groups, {\it J. Reine Angew. Math.} {\bf 637} (2009), 1--21.

\bibitem{kida}Y. Kida, Automorphisms of the Torelli complex and the complex of separating curves, {\it J. Math. Soc. Japan} {\bf 63} (2011), 363--417.

\bibitem{kork-aut}M. Korkmaz, Automorphisms of complexes of curves on punctured spheres and on punctured tori, {\it Topology Appl.} {\bf 95} (1999), 85--111.

\bibitem{luo}F. Luo, Automorphisms of the complex of curves, {\it Topology} {\bf 39} (2000), 283--298.

\bibitem{mv}J. D. McCarthy and W. R. Vautaw, Automorphisms of Torelli groups, preprint,\\ arXiv:math/0311250.

\bibitem{powell}J. Powell, Two theorems on the mapping class group of surfaces, {\it Proc. Amer. Math. Soc.} {\bf 68} (1978), 347--350.

\bibitem{putman-conn}A. Putman, A note on the connectivity of certain complexes associated to surfaces, {\it Enseign. Math. (2)} {\bf 54} (2008), 287--301. 

\bibitem{sha}K. J. Shackleton, Combinatorial rigidity in curve complexes and mapping class groups, {\it Pacific J. Math.} {\bf 230} (2007), 217--232.


\end{thebibliography}
\end{document}